\tikzstyle{littledot}=[circle, fill, inner sep=.4pt,minimum size=.4pt]
\tikzstyle{vertex}=[circle, fill, inner sep=2pt]
\newtheorem{thm}{Theorem}[section]
\newtheorem{lem}[thm]{Lemma}
\newtheorem{prop}[thm]{Proposition}
\newtheorem{cor}[thm]{Corollary}
\newtheorem{defn}[thm]{Definition}
\newtheorem{remark}[thm]{Remark}
\newtheorem{lemdef}[thm]{Definition/Lemma}
\newcommand{\N}{{\mathbb N}}
\newcommand{\Z}{{\mathbb Z}}
\newcommand{\VD}{\mathit{VD}}
\newcommand{\fin}{\sim_\textrm{F}}
\newcommand{\timesf}{\cdot_\textrm{F}}
\newcommand{\fc}{\boldsymbol{c}_\mathit{F}}
\newcommand{\RON}{R^{\mathbf{ON}}}
\newcommand{\ot}{\operatorname{o.\!t.}}
\newcommand{\ON}{\mathbf{ON}}
\newcommand{\bsd}{\partial_{F}} % BS derivatives
\newcommand{\bsi}{\partial_{F}^\star} % BS integrals
\newcommand{\fd}{\omega[\omega]^\omega_{CNF}}
\newcommand{\mon}{\operatorname{Mon}_\omega} % monic polynomials of finite degree
\newcommand{\End}{\operatorname{End}}
\title[Algebraic structures arising from the finite condensation]{Algebraic structures arising from the finite condensation on linear orders}
\author{Jennifer Brown and Ricardo Su\'{a}rez}
\address{Department of Mathematics, California State University Channel Islands, Camarillo, CA, USA}
\email{jennifer.brown@csuci.edu,  ricardo.suarez532@csuci.edu}
\keywords{linear orders, arithmetic of linear orders, order-preserving maps, finite condensation}
\subjclass{06A05, 03E05}
\begin{document}

\maketitle

\begin{abstract}
The finite condensation $\sim_F$ is an equivalence relation defined on a linear order $L$ by $x \sim_F y$ if and only if the set of points lying between $x$ and $y$ is finite. We define an operation $\cdot_F$ on linear orders $L$ and $M$ by $L \cdot_F M = \operatorname{o.t.}\left((LM)/\!\sim_F\right)$; that is, $L \cdot_F M$ is the order type of the lexicographic product of $L$ and $M$ modulo the finite condensation. The infinite order types $L$ such that $L / \! \sim_F\, \cong 1$ are $\omega, \omega^*,$ and  $\zeta$ (where $\omega^*$ is the reverse ordering of $\omega$, and $\zeta$ is the order type of $\mathbb{Z}$). We show that under the operation $\cdot_F$, the set $R=\{1, \omega, \omega^*, \zeta\}$ forms a left regular band. Further, each of the ordinal elements of $R$ defines, via left or right multiplication modulo the finite condensation, a weakly order-preserving map on the class of ordinals. We study these maps' effect on the ordinals of finite degree in Cantor normal form. In particular, we examine the extent to which one of these maps, sending $\alpha$ to $1 \timesf \alpha \cong \ot(\faktor{\alpha}{\sim_F})$, behaves similarly to a derivative operator on the ordinals of finite degree in Cantor normal form. 
% put in something about dual operator here too
\end{abstract}

\section{Introduction}

For $L$ a linear order, define an equivalence relation on $L$ by $x \fin y$ if there are only finitely many points between $x$ and $y$. The \textit{finite condensation} of $L$ is the partition of $L$ into equivalence classes associated with this relation. The resulting equivalence classes are intervals of $L$, and the collection $\faktor{L}{\fin}$ of these intervals forms a linear order itself, with the order inherited from $L$. The finite condensation has been used extensively in the literature on linear orders: for example, to show that any countably infinite linear order is order-isomorphic to a proper subset of itself \cite{DuMi}; and, more recently, to classify the countable linear orders that are left-absorbing under (lexicographic) linear order multiplication \cite{ErGu}.

It is straightforward to verify that $\faktor{\N}{\fin} \cong 1$; $\faktor{\N^*}{\fin} \cong 1$ (where $\N^*$ denotes $\N$ with the reverse order); $\faktor{\Z}{\fin} \cong 1$; and $\faktor{n}{\fin} \cong 1$ for any natural number $n$ (considered as a linear order). Conversely, if $L$ is any linear order such that $\faktor{L}{\fin} \cong 1$, then $L$ must be order-isomorphic to either $\N$, $\N^*$, $\Z$, or some finite $n$. We denote the order types of $\N$, $\N^*$, and $\Z$ by $\omega, \omega^*$, and $\zeta$ respectively. 

If $A$ is a collection of order types, the \textit{ring of types} generated by $A$, denoted $[A]$, is the closure of $A$ under sums and generalized sums: so $L_1 + L_2 \in [A]$ whenever $L_1, L_2 \in [A]$, and $\sum_{i \in I} L_i \in [A]$ whenever $I$ and each $L_i$ are in $[A]$. Hausdorff proved in \cite{Ha} that the ring of types generated by the set $\{\omega, \omega^*, 1\}$ is equal to the set of countable scattered linear orders. Including the order type $\zeta$ in this set to obtain $\{1, \omega, \omega^*, \zeta\}$, we have -- except for the finite linear orders of size greater than 1 -- the order types whose finite condensation is equal to $1$. ($\zeta$ is also a natural order type to include in $R$ because $\omega$, $\omega^*$, and $\zeta$ are exactly the infinite order types used to define the hierarchy of countable very discrete sets, used in proving Cantor's Theorem.) In Section \ref{s: left regular band}, we introduce an operation defined in terms of the finite condensation: $L \timesf M$ is the order type of the lexicographic product of $L$ and $M$, modulo the finite condensation. Under this operation, $(R, \timesf)$ forms a \textit{left regular band} (see Theorem \ref{R is a left regular band}).

The set $\RON := \{1, \omega\}$ of ordinal elements of $R$ also forms a left regular band. In Section \ref{s: endomorphisms}, we use the product $\timesf$ to define left and right actions $\phi^F_l$ and $\phi^F_r$ from $\RON$ to the class of weakly order-preserving maps on $\ON$: $\phi^F_l(1)$ and $\phi^F_r(1)$ send an ordinal $\alpha$ to the order type of its finite condensation; $\phi^F_l(\omega)$ left multiplies $\alpha$ by $\omega$, applies the finite condensation, and then takes the order type of the resulting linear order; and $\phi^F_r(\omega)$ is the identity map. We examine the effects of these weakly order-preserving maps, which we term \textit{endomorphisms of} $\ON$, on the set of ordinals whose Cantor normal form is of finite degree. In Section \ref{s: the phi maps on ordinals of finite degree}, we describe these maps' action on the ordinals whose Cantor normal form is of finite degree. 

For $\alpha$ an ordinal of finite degree, the map $\alpha \mapsto \faktor{\alpha}{\fin}$ sends $\alpha$ to its finite condensation, which is not in general an ordinal (though it is isomorphic to one). By contrast, the map $\phi^F_l(1)$, which sends an ordinal $\alpha$ to $\ot(\faktor{1 \cdot \alpha}{\fin}) \cong \ot(\faktor{\alpha}{\fin})$, is really a map from the set of ordinals of finite degree to itself. In some important respects, $\phi^F_l(1)$ acts like a derivative operator on the set of ordinals of finite degree in Cantor normal form. In Section \ref{s: finite condensation derivatives}, we assign $\phi^F_l(1)$ the suggestive notation $\bsd$, and examine the extent to which the map $\bsd$ acts like a (linear) derivative operator. By restricting the domain to limit ordinals of finite degree, we are able to define an inverse function $\bsi$ to $\bsd$ in Proposition \ref{bsi as an inverse}. We show in Theorem \ref{bsd is linear on limit ordinals of finite degree at least 2}  that $\bsd(p \alpha + q \beta) = p \bsd(\alpha) + q \bsd(\beta)$ when $\alpha$ and $\beta$ are ordinals of finite degree at least $2$ in Cantor normal form.

\section{Basic notions}

In order that this paper be relatively self-contained, we first review some basic definitions. For proofs, and for background on linear orders and condensations, see \cite{Ro}.

\begin{defn}\label{d: linear order}
A \textbf{linear order} is a set $L$ together with a binary relation $\leq_L$ that is a partial order under which any two elements of $L$ are comparable. That is, for all $x, y, z \in L$,
\begin{enumerate}
    \item $x \leq_L x$ (reflexivity),
    \item $[(x \leq_L y) \wedge (y \leq_L x)] \implies x=y$ (antisymmetry), and 
    \item $[(x \leq_L y) \wedge (y \leq_L z)] \implies x \leq_L z$ (transitivity); and, in addition,
    \item $(x \leq_L y) \vee (y \leq_L x)$ (any two elements are comparable).
\end{enumerate}
A linear order in which every non-empty subset has a least element is a \textbf{well-order}. Linear orders $(L, \leq_L)$ and $(M, \leq_M)$ are \textbf{order-isomorphic}, or have the same \textbf{order type}, if there is an order-preserving bijection from $L$ to $M$. If there is an order-preserving injection from $L$ to $M$, we say that $L$ \textbf{embeds} into $M$, and write $L \preceq M$.
\end{defn}
 
We will drop the subscripts from our $\leq$ signs where the context is clear. We will use the convention in \cite{Ro} that we have selected, from each class of order types, a particular representative; and, in the case of well-orders, we will choose that order type to be an ordinal (every well-order is isomorphic to an ordinal). For convenience, we will also take $\Z$ and $\N^*$ to be the designated representatives of their respective equivalence classes. $\ot(L)$ will denote the order type of a linear order $L$.

An \textit{interval} in a linear order $L$ is a subset $I$ of $L$ such that whenever $x, y \in I$ and $z \in L$ with $x \leq z \leq y$, we have $z \in I$. That is, an interval is a convex subset of $L$. We use the notation $[\{x, y\}]$, where $x, y \in L$, to denote either $[x,y]$ (if $x \leq y$) or $[y,x]$ (if $y < x$).

\begin{lemdef}\label{d: condensation}
A \textbf{condensation} of a linear order $L$ is an equivalence relation $\sim$ on $L$ whose equivalence classes are intervals. A condensation $\sim$ is uniquely associated with a map $\boldsymbol{c}: L \to \faktor{L}{\sim}$, called the \textbf{condensation map}, sending each $x \in L$ to its equivalence class mod $\sim$; that is, $\boldsymbol{c}(x) = \{y \in L: y \sim x\}$. The set of equivalence classes, $\faktor{L}{\sim} = \{\boldsymbol{c}(x): x \in L\}$, is a linear order under the order inherited from $L$: if $I_1$ and $I_2$ are elements of $\faktor{L}{\sim}$, then $I_1 < I_2$ iff $x_1 < x_2$ for each $x_1 \in I_1$ and $x_2 \in I_2$. The condensation map $\boldsymbol{c}$ is an order-homomorphism from $L$ to $\faktor{L}{\fin}$. 
\hfill \qed
\end{lemdef}

The induced order on $\faktor{L}{\fin}$ is well-defined since equivalence classes mod $\sim$ are intervals in $L$. We have by \ref{d: condensation} that if $x, y \in L$ with $x\leq y$, then $\boldsymbol{c}(x) \leq \boldsymbol{c}(y)$ in $\faktor{L}{\fin}$. We will sometimes denote $\faktor{L}{\fin}$ by $\boldsymbol{c}[L]$.

In the next section, we will define a multiplication on linear orders based on the \textit{finite condensation}. Under the finite condensation, elements of a linear order are declared to be equivalent when there are only finitely many points between them. 

\begin{lemdef}\label{finite condensation really is one}
Let $L$ be any linear order. For $x, y \in L$, say that $x \fin y$ iff $[\{x, y\}]$ is finite. The relation $\fin$ is a condensation on $L$, called the \textbf{finite condensation}. \hfill \qed
\end{lemdef}

$\fc$ will denote the finite condensation map. 

We define addition of linear orders in the usual way (see \cite{Ku} or \cite{Ro}): intuitively, if $L$ and $M$ are linear orders, $L+M$ is obtained by laying out a copy of $L$ followed by a copy of $M$. 

\begin{defn}\label{d: linear order addition}
Let $L$ and $M$ be linear orders. Define the \textbf{sum} $L + M$ as $((L \times \{0\}) \cup (M \times \{1\}),R)$, where 
\begin{align*}
R= & \{\langle \langle l,0 \rangle, \langle l', 0 \rangle \rangle : l < l' \textrm{ in } L\}  \\
 & \quad \quad \cup \{\langle \langle m, 1 \rangle, \langle m', 1 \rangle \rangle: m < m' \textrm{ in } M\} \\ & \quad \quad \quad \quad \cup [(L \times \{0\}) \times (M \times \{1\})].
\end{align*}   
\end{defn}

For multiplication of linear orders, we use the \textit{lexicographic} ordering, as in \cite{Er} or \cite{Si}, rather than the antilexicographic ordering used in \cite{Ku} or \cite{Ro}. We define the product $LM$ of linear orders $L$ and $M$ as the linear order obtained by putting the lexicographic order on $L \times M$. Intuitively, to form $LM$, one replaces each $l \in L$ with a copy of $M$. (For example, under this lexicographic product, $2 \omega \cong \omega + \omega$, but $\omega 2 \cong \omega$.)

\begin{defn}\label{d: linear order multiplication}
Suppose $L$ and $M$ are linear orders. Then $LM$ is the  lexicographic ordering $R$ of the Cartesian product $L \times M$:
\[
\langle l, m \rangle \, R \, \langle l', m' \rangle \iff (l < l' \textrm{ or } (l = l' \textrm{ and } m < m'))
\]
for $l, l' \in L$ and $m, m' \in M$. 
\end{defn}

%\jb{check: is monotonicity of addition and multiplication proved in Devlin's book?}

The following facts about monotonicity of linear order addition and (lexicographic order) multiplication on ordinals will often be useful (see, for example, \cite{De}):

\begin{lem}\label{monotonicity}
Let $\alpha, \beta,$ and $\gamma$ be ordinals, and define addition and (lexicographic) multiplication as above. Then:
\begin{enumerate}
    \item Strict right monotonicity of $+$: if $\beta < \gamma$, then $\alpha + \beta < \alpha + \gamma$;
    \item Weak left monotonicity of $+$: if $\beta < \gamma$, then $\beta + \alpha \leq \gamma + \alpha$;
    \item Strict left monotonicity of lexicographic order multiplication: if $\beta < \gamma$, then $\beta \alpha < \gamma \alpha$; and 
    \item Weak right monotonicity of lexicographic order multiplication: if $\beta < \gamma$, then $\alpha \beta \leq \alpha \gamma$.
    \hfill \qed
\end{enumerate}
    
\end{lem}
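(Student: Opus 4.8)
The plan is to deduce all four clauses from two standard facts about well-orders: (i) for well-orders $A$ and $B$, one has $\ot(A) < \ot(B)$ precisely when $A$ is order-isomorphic to a proper initial segment of $B$; and (ii) if $A \preceq B$ with $A$ and $B$ both well-ordered, then $\ot(A) \le \ot(B)$. The two strict clauses (1) and (3) will follow by exhibiting an explicit proper initial segment, and the two weak clauses (2) and (4) by exhibiting an explicit order-embedding. Each clause could alternatively be proved by a routine transfinite induction — on $\gamma$ for the strict statements, on $\alpha$ for the weak ones — using only the recursion clauses for $+$ and lexicographic $\cdot$ together with continuity at limit stages; I would mention this as the standard alternative but not carry it out.

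For (1), I would argue that since $\beta < \gamma$ the ordinal $\beta$ is a proper initial segment of $\gamma$, so inside $\alpha + \gamma$ (a copy of $\alpha$ followed by a copy of $\gamma$) the copy of $\alpha$ together with the sub-copy of $\gamma$ corresponding to $\beta$ forms a proper initial segment isomorphic to $\alpha + \beta$; by (i), $\alpha + \beta < \alpha + \gamma$. For (2), from $\beta < \gamma$ we get $\beta \preceq \gamma$, and the map $\beta + \alpha \to \gamma + \alpha$ that sends the $\beta$-block into the $\gamma$-block via a fixed embedding and fixes the $\alpha$-block pointwise is an order-preserving injection; hence $\beta + \alpha \preceq \gamma + \alpha$, and (ii) gives $\beta + \alpha \le \gamma + \alpha$. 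Equality really can occur here (for instance $1 + \omega = \omega = 2 + \omega$), so ``weak'' is best possible.

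For (3) and (4) I would first recall the lexicographic convention in force: $LM$ replaces each point of $L$ by a copy of $M$, so $\gamma\alpha$ is $\gamma$-many consecutive copies of $\alpha$ and $\alpha\beta$ is $\alpha$-many consecutive copies of $\beta$. I would note that clause (3) is intended with $\alpha \ne 0$ (when $\alpha = 0$ both products are $0$, so the strict inequality degenerates harmlessly). Granting $\alpha \ne 0$: the first $\beta$-many copies of $\alpha$ inside $\gamma\alpha$ form an initial segment isomorphic to $\beta\alpha$, and it is proper because at least one further, nonempty, copy of $\alpha$ remains; by (i), $\beta\alpha < \gamma\alpha$. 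For (4), embedding each of the $\alpha$-many copies of $\beta$ in $\alpha\beta$ into the correspondingly placed copy of $\gamma$ in $\alpha\gamma$ (via a fixed embedding $\beta \preceq \gamma$ inside each block) yields an order-preserving injection $\alpha\beta \to \alpha\gamma$; by (ii), $\alpha\beta \le \alpha\gamma$. Again equality can occur (for instance $\omega\cdot 1 = \omega = \omega\cdot 2$), so ``weak'' is sharp.

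I do not expect a genuine obstacle: the content is entirely standard, and the only points needing care are bookkeeping — keeping straight that the lexicographic convention interchanges which factor behaves strictly and which behaves only weakly compared with the more familiar antilexicographic notation, and the degenerate $\alpha = 0$ case in clause (3).
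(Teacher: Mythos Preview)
Your argument is correct and is the standard one. Note, however, that the paper does not actually prove this lemma: it is stated with a terminal \qed\ and introduced with ``see, for example, \cite{De}'', i.e.\ it is simply recorded as a well-known fact about ordinal arithmetic and deferred to a textbook reference. So there is no ``paper's own proof'' to compare against; your write-up supplies what the authors deliberately omitted.

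One useful observation you make that the paper does not: clause~(3) as stated requires $\alpha \neq 0$, since $\beta \cdot 0 = 0 = \gamma \cdot 0$. Your remark that the strict inequality ``degenerates harmlessly'' in that case is exactly right, and worth flagging given that the paper's statement leaves this implicit.
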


\section{The left regular band $R$ and the product modulo the finite condensation}\label{s: left regular band}

In this section, we show that a set of four order types arising naturally from Hausdorff's Theorem forms a left regular band (first defined in \cite{Cl}) under an operation defined in terms of the finite condensation. A \textit{scattered} linear order is one that does not contain a copy of the rationals $\mathbb{Q}$. Hausdorff's theorem on the countable scattered linear orderings (\cite{Ha}; see \cite{Ro}) states that a countable linear ordering $L$ is scattered if and only if it is very discrete. Loosely speaking, any countable scattered linear order can be built recursively as a generalized sum from a ``basis'' consisting of the simple order types $\omega$, $\omega^*$, and $\zeta$, along with finite order types. Here $\omega$ denotes the order type of the natural numbers $\mathbb{N}$; $\omega^*$ denotes the order type of the negative integers $\N^*$; and $\zeta$ denotes the order type of the integers $\mathbb{Z}$. 

\begin{defn}\label{d: very discrete}
The classes $\VD_\alpha$ of linear orders, for each countable ordinal $\alpha < \omega_1$, are defined inductively as follows:
\begin{enumerate}
    \item $0$ and $1$ are in $\VD_0$.
    \item Let $\alpha < \omega_1$ and suppose $\VD_\beta$ has already been defined for each $\beta < \alpha$. Then let  $\VD_\alpha$ consist of all linear orders of the form $
    \sum_{i \in I}L_i$
    where $I$ is among $\omega, \omega^*, \zeta$, or $n$ (for some $n \in \omega$), and where for each $i \in I$, $L_i \in \bigcup\{\VD_\beta: \beta < \alpha\}$.
\end{enumerate}
The class $\VD$ of \textbf{(countable) very discrete linear orders} is then defined to be the union of the classes $\VD_\alpha$; that is, $\VD = \bigcup_{\alpha < \omega_1} \VD_\alpha$.
\end{defn}

\begin{thm}[Hausdorff]\label{Hausdorff's theorem}
A countable linear ordering $L$ is scattered if and only if it is very discrete. %Further, $r_{\VD}(L) = r_F(L)$.
\end{thm}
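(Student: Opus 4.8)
The plan is to prove the two implications separately. For ``very discrete $\Rightarrow$ scattered,'' I would show by transfinite induction on $\alpha$ that every $L \in \VD_\alpha$ is scattered. The orders $0$ and $1$ are trivially scattered, so in the inductive step write $L = \sum_{i \in I} L_i$ with $I$ one of $\omega, \omega^*, \zeta, n$ and each $L_i$ scattered by hypothesis, and suppose toward a contradiction that $f \colon \Q \hookrightarrow L$ is an order embedding. Composing $f$ with the map $L \to I$ that sends a point to the index of the block $L_i$ containing it gives a weakly order-preserving map $h \colon \Q \to I$ whose fibers are convex subsets of $\Q$. If some fiber $h^{-1}(i)$ contains two rationals $q_1 < q_2$ then, by convexity, $h$ is constant on the open interval $(q_1,q_2)_\Q \cong \Q$, so $f$ embeds a copy of $\Q$ into the single block $L_i$, contradicting the inductive hypothesis; otherwise $h$ is injective, hence an order embedding of $\Q$ into $I$, which is impossible since $\omega$, $\omega^*$, $\zeta$, and all finite orders are scattered.

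For ``scattered $\Rightarrow$ very discrete'' I would iterate the finite condensation. Define relations $\sim_\alpha$ on $L$: let $\sim_0$ be equality; let $\sim_{\alpha+1}$ be the relation whose quotient is the finite condensation of $L/\!\sim_\alpha$ (equivalently, $x \sim_{\alpha+1} y$ iff only finitely many $\sim_\alpha$-classes lie between the classes of $x$ and $y$); and at a limit $\lambda$ let $\sim_\lambda$ be $\bigcup_{\beta<\lambda} \sim_\beta$, which is again a condensation since an increasing union of equivalence relations with convex classes has convex classes. Viewed as subsets of the countable set $L \times L$, the $\sim_\alpha$ form a non-decreasing chain, so they stabilize at some countable $\alpha_0$; then every finite-condensation class of $M := L/\!\sim_{\alpha_0}$ is a singleton, which says precisely that $M$ is densely ordered. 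Now $M$, being a condensation of $L$, is scattered (an embedding $\Q \hookrightarrow M$ lifts to one into $L$ via a choice of class representatives), while a countable dense linear order with at least two points contains a copy of $\Q$ (apply Cantor's characterization of $\Q$ to an open interval of $M$). Hence $M$ has at most one point, and, assuming $L$ is nonempty (otherwise $L \in \VD_0$), $M \cong 1$; that is, $L$ is a single $\sim_{\alpha_0}$-class.

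It then remains to prove, by induction on $\alpha \le \alpha_0$, that every $\sim_\alpha$-class of $L$ lies in $\VD$ --- applying this at $\alpha = \alpha_0$ gives $L \in \VD$. A $\sim_0$-class is a single point. If $\alpha = \beta+1$ and $C$ is a $\sim_\alpha$-class, then $C$ is the union of the $\sim_\beta$-classes it contains; these classes, ordered as elements of $L/\!\sim_\beta$, form a single finite-condensation class of $L/\!\sim_\beta$, whose order type is $n$, $\omega$, $\omega^*$, or $\zeta$ (the standard description of finite-condensation classes; see \cite{Ro}). So $C = \sum_{i \in J} D_i$ with $J$ an allowed index type and each $D_i \in \VD$ by the inductive hypothesis, whence $C \in \VD$.

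The limit case is the \emph{main obstacle}. When $\alpha$ is a limit and $C$ is the $\sim_\alpha$-class of a point $x$, then $C = \bigcup_n C_n$ is an increasing union, where $C_n$ is the $\sim_{\beta_n}$-class of $x$ for an $\omega$-sequence $(\beta_n)$ cofinal in $\alpha$ (available because every countable limit ordinal has cofinality $\omega$), and each $C_n$ is a convex subset of $C_{n+1}$. Writing $C_{n+1} = L_n + C_n + R_n$ with $L_n$ and $R_n$ convex, the union telescopes into the $3$-term sum $\bigl(\sum_{n \in \omega^*} L_n\bigr) + C_0 + \bigl(\sum_{n \in \omega} R_n\bigr)$. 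Each $C_n$ lies in $\VD$ by the inductive hypothesis, so to conclude that $C \in \VD$ it suffices to know that $\VD$ is closed under passing to sub-intervals, which would give $L_n, R_n \in \VD$ and hence that the two infinite subsums --- and therefore $C$ --- are in $\VD$. I would establish this closure property as a preliminary lemma, by induction on $\VD$-rank, using that every interval of $\omega$, $\omega^*$, $\zeta$, or $n$ is again one of these order types.
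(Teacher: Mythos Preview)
The paper does not prove this theorem: it is stated with attribution to Hausdorff and references to \cite{Ha} and \cite{Ro}, and the text immediately moves on to defining $R$. So there is no in-paper proof to compare against.

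Your argument is essentially the classical one (as in \cite{Ro}) and is correct. The forward direction is routine. For the backward direction, your transfinite iteration of the finite condensation, the observation that the stabilized quotient is dense and hence has at most one point when $L$ is scattered, and the induction showing each $\sim_\alpha$-class lies in $\VD$ are all sound. You correctly isolate the limit step as the delicate point; the telescoping decomposition $C = \bigl(\sum_{n\in\omega^*} L_n\bigr) + C_0 + \bigl(\sum_{n\in\omega} R_n\bigr)$ is valid, and the preliminary lemma that $\VD$ is closed under passing to intervals is both true and provable by the induction on $\VD$-rank that you sketch. No gaps.
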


We define $R:=\{1, \omega, \omega^*, \zeta\}$. The set $R$, then, is  similar to the ``basis'' used to recursively build the countable scattered linear orders. For any linear order $L$ and for any $x \in L$, the condensation class $c_F(x)$ is either finite or isomorphic to one of the order types $\omega$, $\omega^*$, or $\zeta$ (exercise 4.5 in \cite{Ro}). On the other hand, it is clear that any linear order isomorphic to an element of $R$, when modded-out by the finite condensation, is isomorphic to the 1-element linear order, because $\faktor{\N}{\fin} \cong \faktor{\N^*}{\fin} \cong \faktor{\Z}{\fin} \cong \faktor{1}{\fin} \cong 1$. Elements of $R$ also have the following property (when the linear order $X$ in the lemma is $\omega$, this is part of Exercise 4.5 in \cite{Ro}):

\begin{lem}\label{right absorption mod fin in R}
Let $M$ be any linear order and let $X$ be a linear order of order type $\omega, \omega^*$, or $\zeta$. Then $\faktor{MX}{\fin} \cong M$. 
\end{lem}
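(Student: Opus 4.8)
The plan is to analyze the finite condensation classes of the lexicographic product $MX$ directly. Write elements of $MX$ as pairs $\langle m, x \rangle$ with $m \in M$ and $x \in X$, ordered lexicographically. The key observation is that for a fixed $m \in M$, the ``fiber'' $\{m\} \times X$ is an interval in $MX$ order-isomorphic to $X$, hence order-isomorphic to $\omega$, $\omega^*$, or $\zeta$; in each of these three cases every two points of $X$ have only finitely many points between them, so the entire fiber $\{m\} \times X$ lies in a single $\fin$-class. First I would prove the reverse inclusion: no $\fin$-class meets two distinct fibers. Suppose $m < m'$ in $M$ and consider $\langle m, x \rangle$ and $\langle m', x' \rangle$; I claim the interval between them is infinite. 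Indeed, if $X \cong \omega$ or $\zeta$, then $\{m\} \times X$ has no greatest element, so there are infinitely many points of the form $\langle m, y \rangle$ above $\langle m, x \rangle$, all of which lie strictly below $\langle m', x' \rangle$; the cases $X \cong \omega^*$ (look below $\langle m', x' \rangle$ within its fiber) and the remaining subcase of $\zeta$ are symmetric. Hence $\langle m, x \rangle \not\fin \langle m', x' \rangle$.

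Combining the two directions, the $\fin$-classes of $MX$ are exactly the fibers $\{m\} \times X$ for $m \in M$. I would then define the map $f : M \to \faktor{MX}{\fin}$ by $f(m) = \fc(\langle m, x \rangle)$ for any (equivalently, every) $x \in X$; this is well-defined by the previous paragraph, and surjective since every $\fin$-class is a fiber. It is injective because distinct $m$ give distinct fibers, and it is order-preserving: if $m < m'$ in $M$ then every point of $\{m\} \times X$ is lexicographically below every point of $\{m'\} \times X$, so $\fc(\langle m,x\rangle) < \fc(\langle m', x'\rangle)$ in $\faktor{MX}{\fin}$ by the definition of the induced order in \ref{d: condensation}. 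Thus $f$ is an order-isomorphism and $\faktor{MX}{\fin} \cong M$.

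The only mild subtlety — and the one step worth stating carefully rather than waving through — is the claim that each fiber is contained in a single $\fin$-class, i.e. that for $X$ of order type $\omega$, $\omega^*$, or $\zeta$, any two points of $X$ have only finitely many points of $X$ between them. This is immediate from the fact that $\faktor{X}{\fin} \cong 1$ (noted just before the lemma): $\faktor{X}{\fin} \cong 1$ means all of $X$ is one $\fin$-class, which is precisely the statement that $[\{x,y\}]$ is finite in $X$ for all $x, y \in X$. Since the interval $[\{x,y\}]$ computed inside the fiber $\{m\}\times X$ equals the interval $[\{\langle m,x\rangle, \langle m,y\rangle\}]$ computed in $MX$ (the fiber is convex in $MX$), the finiteness transfers to $MX$. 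I do not expect any genuine obstacle here; the argument is a direct unwinding of the definitions, with the three-way case split on the order type of $X$ being the only place requiring a moment's care.
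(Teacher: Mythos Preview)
Your proposal is correct and follows essentially the same approach as the paper's proof: identify the $\fin$-classes of $MX$ as precisely the fibers (copies of $X$) by showing each fiber lies in a single class and that no class meets two distinct fibers, then conclude $\faktor{MX}{\fin} \cong M$. Your version is somewhat more explicit---you spell out the case analysis for why distinct fibers are separated by infinitely many points and write down the order-isomorphism $f$ explicitly---whereas the paper simply asserts ``by construction, there are infinitely many points between $r$ and $s$'' and passes directly to the conclusion.
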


\begin{proof}
First observe that if the order type of $X$ is among $\omega, \omega^*$, or $\zeta$, and if $x$ is any point in $X$, then $\fc(x) = X$. 

Let $M$ be any linear order; let $X$ be order-isomorphic to one of $\omega, \omega^*$, or $\zeta$; and consider a point $r \in MX$. Then for some $m \in M$, $r$ is a point in the copy $X_m$ of $X$ that replaced $m$ in forming the product $MX$. By the above observation, $\fc(r) \supseteq X_m$. Now take any $s \in MX$ that is in a copy $X_{m'}$ of $X$ for some $m' \neq m$ in $M$. By construction, there are infinitely many points between $r$ and $s$, so $s \cancel{\sim}_\textrm{F} r$. Then $\fc(r) \subseteq X_m$, and so we have $\fc(r) = X_m$. That is, each copy $X_m$ of $X$, for $m \in M$, is its own equivalence class mod $\fin$. Therefore $\faktor{MX}{\fin} \cong M$.
\end{proof}

We define a multiplication modulo the finite condensation. 

\begin{defn}\label{d: timesf}
For linear orders $L$ and $M$, define an operation $\timesf$ (\textbf{multiplication modulo the finite condensation}) by $L \timesf M := \ot(\faktor{L M}{\fin})$. 
\end{defn}

Note that if $L$ and $M$ are ordinals, then $L \timesf M$ is also an ordinal. 

Suppose $L$ and $M$ are infinite linear orders. In forming $\faktor{LM}{\fin}$ from $LM$, each copy $M_l$ of $M$ gets replaced by $\faktor{M_l}{\fin}$.   

\begin{lem}\label{timesf is associative on R}
For all $X, Y, Z \in R$, 
\[
(X \timesf Y) \timesf Z \cong X \timesf (Y \timesf Z).
\]
That is, $\timesf$ is an associative operation on $R$.
\end{lem}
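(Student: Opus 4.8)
The plan is to leverage the two structural facts already available: Lemma \ref{right absorption mod fin in R}, which gives $\faktor{MX}{\fin} \cong M$ for $X$ of type $\omega, \omega^*, \zeta$; and the computation $\faktor{X}{\fin} \cong 1$ for every $X \in R$. First I would record the basic consequences of these: for any $X \in R$, $1 \timesf X \cong \ot(\faktor{1 \cdot X}{\fin}) \cong \ot(\faktor{X}{\fin}) \cong 1$, and $X \timesf 1 \cong \ot(\faktor{X \cdot 1}{\fin}) \cong \ot(\faktor{X}{\fin}) \cong 1$; also, if $Y \in R$ is one of the infinite types, then $X \timesf Y \cong \ot(\faktor{XY}{\fin}) \cong \ot(X) \cong X$ by Lemma \ref{right absorption mod fin in R}. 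Thus $\timesf$ on $R$ has the explicit description: $X \timesf Y \cong 1$ if $Y = 1$, and $X \timesf Y \cong X$ if $Y \in \{\omega, \omega^*, \zeta\}$ — in particular $1$ is a left identity "up to $Y=1$", and more usefully, the value of $X \timesf Y$ depends only on $X$ and on whether $Y$ is trivial.

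With that description in hand, associativity becomes a finite case check on whether $Y$ and $Z$ are trivial. Concretely: if $Z = 1$, then $(X \timesf Y) \timesf Z \cong 1$ and $Y \timesf Z \cong 1$, so $X \timesf (Y \timesf Z) \cong X \timesf 1 \cong 1$; the two sides agree. If $Z \in \{\omega, \omega^*, \zeta\}$, then $(X \timesf Y) \timesf Z \cong X \timesf Y$ (right-multiplying by an infinite type of $R$ does nothing), while $Y \timesf Z \cong Y$, so $X \timesf (Y \timesf Z) \cong X \timesf Y$ as well. In every case the two sides coincide, which is the claim. This reduces the whole lemma to the two displayed identities $X \timesf 1 \cong 1$ and $X \timesf Z \cong X$ for infinite $Z \in R$, both of which follow immediately from Lemma \ref{right absorption mod fin in R} and the remarks preceding it.

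One subtlety I would be careful to state cleanly is that $\timesf$ is a well-defined operation on order types — i.e., that $\faktor{LM}{\fin}$ depends only on the order types of $L$ and $M$, not on chosen representatives — since the argument silently passes between "$\omega$" as an element of $R$ and "$\N$" as a concrete linear order when applying Lemma \ref{right absorption mod fin in R}. This is routine (an order-isomorphism $L \to L'$ induces one $LM \to L'M$ which respects $\fin$, since it preserves finiteness of intervals), so I would dispatch it in a sentence rather than belabor it. I do not anticipate a genuine obstacle here: the only mild care needed is bookkeeping the cases and making sure the appeal to Lemma \ref{right absorption mod fin in R} is applied to $MX$ with $X$ infinite (not to $XM$), which is exactly the form in which that lemma is stated.
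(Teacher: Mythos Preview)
Your proposal is correct and follows essentially the same approach as the paper: both reduce associativity to Lemma~\ref{right absorption mod fin in R} together with the observation that $\faktor{X}{\fin} \cong 1$ for every $X \in R$, and then finish with a short case split on whether the rightmost factor is $1$ or infinite. The only organizational difference is that you first extract the closed-form rule ``$X \timesf Y \cong 1$ if $Y = 1$, and $X \timesf Y \cong X$ otherwise'' and then run a two-way case split on $Z$, whereas the paper runs a three-way split (on $Z$, then on $Y$ when $Z=1$) without first stating that rule; the paper records the full multiplication table immediately afterward in Lemma~\ref{timesf multiplication table}.
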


\begin{proof}
We proceed by cases. Clearly both sides of the equation are isomorphic to $1$ if $X, Y, $ and $Z$ are all $1$, so we may assume that at least one of $X, Y$, and $Z$ is in $\{\omega, \omega^*, \zeta\}$.

\textit{Case 1:} First suppose $Z \in \{\omega, \omega^*, \zeta\}$. Then by Lemma \ref{right absorption mod fin in R}, we have that for any $X, Y \in R$, 
\[
(X \timesf Y) \timesf Z \cong X \timesf Y \cong X \timesf (Y \timesf Z).
\]
\textit{Case 2a:} Suppose $Z=1$ and $Y \in \{\omega, \omega^*, \zeta\}$. Then by Lemma \ref{right absorption mod fin in R} and because $\faktor{X}{\fin} \cong 1$ for every $X \in R$, we have 
\[
(X \timesf Y) \timesf Z \cong X \timesf Z = X \timesf 1 = \ot(\faktor{X1}{\fin}) \cong \ot(\faktor{X}{\fin}) \cong 1.
\]
On the other hand, 
\[
X \timesf (Y \timesf Z) \cong X \timesf (\ot(\faktor{Y1}{\fin})) \cong X \timesf (\ot(\faktor{Y}{\fin})) \cong X \timesf 1 \cong \faktor{X}{\fin} \cong 1.
\]
\textit{Case 2b:} Finally, suppose $Z=1$ and $Y=1$. Recall that we are supposing that not all of $X, Y$, and $Z$ are finite, so we may assume that $X \in \{\omega, \omega^*, \zeta\}$. Then
\[
X \timesf (Y \timesf Z) \cong X \timesf (1 \timesf 1) \cong X \timesf 1 \cong \faktor{X}{\fin} \cong 1.
\]
Also, 
\[
(X \timesf Y) \timesf Z \cong (\faktor{X1}{\fin}) \timesf 1 \cong (\faktor{X}{\fin}) \timesf 1 \cong 1 \timesf 1 \cong 1.
\]
\end{proof}

%\jb{Is $\timesf$ associative in general?}

\begin{lem}\label{timesf multiplication table}
We have the following multiplication table for $R$ under $\timesf$:
\begin{center}
\begin{tabular}{l||l|l|l|l}
$\timesf$ & $1$ & $\omega$ & $\omega^*$ & $\zeta$ \\ \hline \hline
$1$ & $1$ & $1$ & $1$ & $1$ \\ \hline
$\omega$ & $1$ & $\omega$ & $\omega$ & $\omega$ \\ \hline
$\omega^*$ & $1$ & $\omega^*$ & $\omega^*$ & $\omega^*$ \\ \hline
$\zeta$ & $1$ & $\zeta$ & $\zeta$ & $\zeta$ \\ 
\end{tabular}
\end{center}
\end{lem}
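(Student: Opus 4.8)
The plan is to verify each of the sixteen entries directly from the definition $L \timesf M = \ot(\faktor{LM}{\fin})$, using only Lemma \ref{right absorption mod fin in R} and the fact (noted just before that lemma) that $\faktor{X}{\fin} \cong 1$ for every $X \in R$.

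First I would dispose of the row and column indexed by $1$. For the row: for any $M \in R$ we have $1 \cdot M \cong M$, so $1 \timesf M = \ot(\faktor{M}{\fin}) \cong 1$, which fills the top row with $1$'s. For the column: for any $L \in R$ we have $L \cdot 1 \cong L$, so $L \timesf 1 = \ot(\faktor{L}{\fin}) \cong 1$, which fills the left column with $1$'s. These two computations account for the first row and first column of the table.

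It then remains to treat the lower-right $3 \times 3$ block, i.e.\ $L \timesf X$ with $L \in \{\omega, \omega^*, \zeta\}$ and $X \in \{\omega, \omega^*, \zeta\}$. Here Lemma \ref{right absorption mod fin in R} applies verbatim with the role of $M$ played by $L$: since $X$ has order type $\omega$, $\omega^*$, or $\zeta$, we get $\faktor{LX}{\fin} \cong L$, hence $L \timesf X = \ot(\faktor{LX}{\fin}) \cong L$. Invoking the conventions of Section~\ref{s: left regular band} under which $\omega$, $\omega^*$, and $\zeta$ are the designated representatives of their order types, this says $L \timesf X$ equals the order type labelling row $L$, independently of the column $X$ — exactly the pattern displayed in the table.

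There is essentially no obstacle here: the one substantive ingredient, Lemma \ref{right absorption mod fin in R}, has already been proved, and everything else is bookkeeping over the four cases of the first row, the four cases of the first column, and the nine cases of the remaining block (with overlap at the $1,1$ entry). If any point deserves care it is merely the remark that $\ot$ applied to our chosen representatives of $\omega^*$ and $\zeta$ returns $\omega^*$ and $\zeta$ themselves, so that the table entries are literally these order types and not just isomorphic copies.
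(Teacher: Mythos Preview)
Your proof is correct and follows essentially the same approach as the paper: handle the first row and column via $1 \cdot M \cong M \cdot 1 \cong M$ together with $\faktor{M}{\fin} \cong 1$ for all $M \in R$, and then obtain the remaining nine entries directly from Lemma~\ref{right absorption mod fin in R}. The only difference is that you spell out the bookkeeping a bit more explicitly and add the remark about designated representatives, which the paper leaves implicit.
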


\begin{proof}
The entries in the first row and first column of the table are $1$ since $1L \cong L1 \cong L$ for any linear order $L$, and since $\faktor{L}{\fin} \cong 1$ if $L$ has order type $\omega, \omega^*$, or $\zeta$. The other entries follow from Lemma \ref{right absorption mod fin in R}.
\end{proof}

A \textit{semigroup} is a set with an associative binary operation. A \textit{band} is a semigroup in which every element is idempotent; and a \textit{left-regular band} is a band $B$ such that $xyx=xy$ for all $x, y \in B$. 

\begin{thm}\label{R is a left regular band}
$(R, \timesf)$ is a left regular band.
\end{thm}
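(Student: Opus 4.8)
The plan is to verify the three defining properties of a left regular band directly, leaning on the work already done. Recall that $(R, \timesf)$ is a left regular band if (i) $\timesf$ is associative on $R$, (ii) every element of $R$ is idempotent under $\timesf$, and (iii) $X \timesf Y \timesf X = X \timesf Y$ for all $X, Y \in R$.

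First I would dispatch associativity: this is exactly Lemma \ref{timesf is associative on R}, so nothing further is needed. Next, idempotency follows by inspection of the multiplication table in Lemma \ref{timesf multiplication table}: the diagonal entries are $1 \timesf 1 \cong 1$, $\omega \timesf \omega \cong \omega$, $\omega^* \timesf \omega^* \cong \omega^*$, and $\zeta \timesf \zeta \cong \zeta$, so $X \timesf X \cong X$ for every $X \in R$. (Alternatively, idempotency of each non-trivial element is an immediate consequence of Lemma \ref{right absorption mod fin in R} with $M = X$, and the $1$ case is trivial.)

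The remaining point is the left-regularity identity $X \timesf Y \timesf X = X \timesf Y$; by associativity (already in hand) it does not matter how we parenthesize. The key structural observation, readable straight off the table in Lemma \ref{timesf multiplication table}, is that for any $X, Y \in R$ the product $X \timesf Y$ equals $1$ if $X = 1$ and equals $X$ otherwise — that is, $X \timesf Y$ depends only on $X$ (it is a left-zero-like behavior modulo the degenerate element $1$). Using this twice: if $X = 1$ then $X \timesf Y \timesf X = 1 \timesf X = 1 = X \timesf Y$; if $X \neq 1$ then $X \timesf Y = X$, hence $X \timesf Y \timesf X = X \timesf X = X = X \timesf Y$ using idempotency. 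So the identity holds in all cases. I would write this as a short case split (or simply cite the table), since every product involved appears explicitly in Lemma \ref{timesf multiplication table}.

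I do not anticipate a genuine obstacle here: all the substantive content — associativity and the absorption behavior $\faktor{MX}{\fin} \cong M$ — has been established in Lemmas \ref{right absorption mod fin in R}, \ref{timesf is associative on R}, and \ref{timesf multiplication table}, so the proof of Theorem \ref{R is a left regular band} is essentially an assembly of those facts plus a trivial verification of idempotency and the $xyx = xy$ law from the multiplication table. The only thing to be careful about is to state clearly that we are using associativity to suppress parentheses in the triple products, and to handle the element $1$ separately since it is the one element for which $X \timesf Y \neq X$.
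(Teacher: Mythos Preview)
Your overall strategy matches the paper's: cite Lemma~\ref{timesf is associative on R} for associativity, read idempotency off the diagonal of the table in Lemma~\ref{timesf multiplication table}, and verify the identity $X \timesf Y \timesf X = X \timesf Y$ by a short case split using the table. That is exactly how the paper proceeds.

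However, your ``key structural observation'' is misstated and, as written, the case $X \neq 1$ fails. You claim that $X \timesf Y$ depends only on $X$, equalling $1$ if $X = 1$ and $X$ otherwise. But the first \emph{column} of the table shows $\omega \timesf 1 = \omega^* \timesf 1 = \zeta \timesf 1 = 1$, so for $X \neq 1$ and $Y = 1$ you get $X \timesf Y = 1 \neq X$, and your deduction ``$X \timesf Y \timesf X = X \timesf X$'' breaks down there. The correct reading of the table is that $1$ is two-sidedly absorbing: $X \timesf Y = 1$ whenever $X = 1$ \emph{or} $Y = 1$, and $X \timesf Y = X$ when both $X, Y \in \{\omega, \omega^*, \zeta\}$. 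With that correction your argument goes through: if $1 \in \{X, Y\}$ then $X \timesf Y = 1$ and $(X \timesf Y) \timesf X = 1 \timesf X = 1$; if $X, Y \in \{\omega, \omega^*, \zeta\}$ then $X \timesf Y = X$ and $(X \timesf Y) \timesf X = X \timesf X = X$ by idempotency. This is precisely the paper's split (the paper phrases the second case via Lemma~\ref{right absorption mod fin in R} rather than idempotency, but the content is the same). You should also mention closure of $R$ under $\timesf$, which the paper notes explicitly from the table, since this is part of being a semigroup.
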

\begin{proof}
We know by Lemma \ref{timesf is associative on R} that $\timesf$ is an associative operation, and $R$ is closed under $\timesf$ by Lemma \ref{timesf multiplication table}, so $(R, \timesf)$ is a semigroup. We see from the multiplication table in Lemma \ref{timesf multiplication table} that every element of $R$ is an idempotent element, so $(R, \timesf)$ is a band.  

Finally, let $X, Y \in R$; we show that $X \timesf Y \timesf X \cong X \timesf Y$. First, observe from the multiplication table in Lemma \ref{timesf multiplication table} that $1$ is an absorbing element: for all $X \in R$, $X \timesf 1 = 1 = 1 \timesf X$ for all $X \in R$. Therefore $X \timesf Y \timesf X \cong X \timesf Y \cong 1$ whenever one of $X, Y$ is $1$. Now suppose both $X$ and $Y$ are among $\omega, \omega^*$, or $\zeta$. Then $X \timesf Y \timesf X \cong X \timesf Y$ by Lemma \ref{right absorption mod fin in R}. Thus $R$ is a left regular band. 
\end{proof}

Since $(R, \timesf)$ is a left regular band, the relation $\leq_{\timesf}$ on $R$, defined by letting $X \leq_{\timesf} Y$ exactly when $X \timesf Y \cong Y$, is a partial order. From the multiplication table for $\timesf$ in Lemma \ref{timesf multiplication table}, we obtain the following Hasse diagram for the resulting poset:
\[
\begin{tikzpicture}
\tikzstyle{vertex}=[circle, fill, inner sep=1pt]   
\draw (-2,0) -- (0,1) -- (2,0);
\draw (0,0) -- (0,1);
\node [below] at (-2,0) {$\omega$};
\node [below] at (0,0) {$\omega^*$};
\node [below] at (2,0) {$\zeta$};
\node [above] at (0,1) {$1$};
\node [vertex] at (-2,0) {};
\node [vertex] at (0,0) {};
\node [vertex] at (2,0) {};
\node [vertex] at (0,1) {};
\node at (-3, .7) {$(R, \leq_{\timesf})$};
\end{tikzpicture}
\]

If we consider the operation $\timesf$ on just the ordinal elements $1$ and $\omega$ of $R$, we also get a left regular band.

\begin{cor}
$\RON=\{1, \omega\}$ is a left regular band under the operation $\timesf$.
\end{cor}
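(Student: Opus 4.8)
The plan is to verify that $\RON = \{1, \omega\}$ inherits all the relevant structure from $R$ by checking closure. First I would observe that, since $\timesf$ is associative on all of $R$ (Lemma \ref{timesf is associative on R}), it is in particular associative when restricted to the subset $\RON$; likewise, the identity $X \timesf Y \timesf X \cong X \timesf Y$ holds for all $X, Y \in R$ (Theorem \ref{R is a left regular band}), hence \emph{a fortiori} for all $X, Y \in \RON$. So the only thing that genuinely needs checking is that $\RON$ is closed under $\timesf$ — that is, that $\RON$ is a subsemigroup, and in fact a subband, of $R$.

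To check closure, I would simply read off the relevant $2 \times 2$ corner of the multiplication table in Lemma \ref{timesf multiplication table}: $1 \timesf 1 \cong 1$, $1 \timesf \omega \cong 1$, $\omega \timesf 1 \cong 1$, and $\omega \timesf \omega \cong \omega$. Every entry lies in $\{1, \omega\} = \RON$, so $\RON$ is closed under $\timesf$. The same four entries show $1 \timesf 1 \cong 1$ and $\omega \timesf \omega \cong \omega$, so both elements are idempotent, confirming that $(\RON, \timesf)$ is a band. Combining this with the inherited associativity and the inherited left-regularity identity gives that $(\RON, \timesf)$ is a left regular band.

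I do not expect any real obstacle here: the corollary is a routine specialization of Theorem \ref{R is a left regular band}, and the entire content is the closure check, which is immediate from the multiplication table. The one thing to be mildly careful about is to state explicitly that associativity and the identity $xyx = xy$ are \emph{hereditary} properties — they are universally quantified equations, so they automatically survive restriction to any subset closed under the operation — rather than re-deriving them; this keeps the proof to two or three sentences.

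For completeness I might also remark that $(\RON, \leq_{\timesf})$ is the two-element chain with $\omega <_{\timesf} 1$, obtained by restricting the Hasse diagram above, since $\omega \timesf 1 \cong 1$ while $1 \timesf \omega \cong 1 \not\cong \omega$; but this is optional and not needed for the statement of the corollary itself.
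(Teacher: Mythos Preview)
Your proposal is correct and follows essentially the same approach as the paper: check closure of $\RON$ under $\timesf$ via the multiplication table in Lemma~\ref{timesf multiplication table}, then observe that associativity, idempotence, and the left-regularity identity are inherited from the ambient left regular band $R$. Your version is somewhat more explicit (spelling out the four products and the hereditary nature of universally quantified identities), and the optional remark about $(\RON, \leq_{\timesf})$ is extra, but the substance matches the paper's two-sentence proof exactly.
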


\begin{proof}
We can see by the multiplication table in Lemma \ref{timesf multiplication table} that $\RON$ is closed under $\timesf$. Since $R$ is a left regular band, the other properties of a left regular band are inherited from $R$.
\end{proof}

\section{Endomorphisms defined by multiplication mod the finite condensation}\label{s: endomorphisms}

For $X \in \RON$, define a class function $\phi^F_r(X)$ on the class $\mathbf{ON}$ of ordinals by $\phi^F_r(X)(\alpha) = \alpha \timesf X$, for $\alpha$ an ordinal.  Similarly, define $\phi^F_l(X)(L) = X \timesf L$. We will show in this section that each of the maps $\phi^F_r(1)$, $\phi^F_l(1)$, $\phi^F_r(\omega)$, and $\phi^F_l(\omega)$ is what we will refer to as an \textit{endomorphism of $\mathbf{ON}$}, in the sense of a weakly order-preserving map from $\mathbf{ON}$ to $\mathbf{ON}$.

First, $\phi^F_r(\omega)$ is certainly an endomorphism of $\ON$, as it is the identity map when applied to an ordinal:

\begin{lem}\label{timesf by omega on right is the identity}
If $\alpha$ and $\beta$ are ordinals with $\alpha < \beta$, then $\phi^F_r(\omega)(\alpha) =\alpha < \beta =\phi^F_r(\omega)(\beta)$.
\end{lem}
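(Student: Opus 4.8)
The statement to prove is Lemma \ref{timesf by omega on right is the identity}: if $\alpha < \beta$ are ordinals, then $\phi^F_r(\omega)(\alpha) = \alpha < \beta = \phi^F_r(\omega)(\beta)$.

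The plan is to unwind the definitions and reduce everything to Lemma \ref{right absorption mod fin in R}. By definition, $\phi^F_r(\omega)(\gamma) = \gamma \timesf \omega = \ot(\faktor{\gamma\omega}{\fin})$ for any ordinal $\gamma$. The key observation is that Lemma \ref{right absorption mod fin in R} applies with $M = \gamma$ (any linear order, in particular any ordinal) and $X$ of order type $\omega$: it tells us that $\faktor{\gamma\omega}{\fin} \cong \gamma$. Hence $\phi^F_r(\omega)(\gamma) = \ot(\faktor{\gamma\omega}{\fin}) = \ot(\gamma) = \gamma$, so $\phi^F_r(\omega)$ is literally the identity map on $\ON$.

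Given that, the inequality is immediate: from $\alpha < \beta$ we get $\phi^F_r(\omega)(\alpha) = \alpha < \beta = \phi^F_r(\omega)(\beta)$. I would write this as a two-line argument: first cite Lemma \ref{right absorption mod fin in R} to conclude $\gamma \timesf \omega \cong \gamma$ for every ordinal $\gamma$ (and note that since $\gamma$ is an ordinal this is an equality of ordinals, consistent with the remark after Definition \ref{d: timesf} that the product of ordinals under $\timesf$ is an ordinal), then apply this to $\alpha$ and $\beta$ and invoke the hypothesis $\alpha < \beta$.

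There is essentially no obstacle here; the only thing to be careful about is the bookkeeping between ``order type'' and ``ordinal,'' i.e. making sure that $\faktor{\gamma\omega}{\fin} \cong \gamma$ really does yield $\ot(\faktor{\gamma\omega}{\fin}) = \gamma$ as ordinals, which is immediate since $\gamma$ is already an ordinal and we have chosen ordinals as the canonical representatives of well-order types. I would keep the proof to two or three sentences.

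\begin{proof}
For any ordinal $\gamma$, Definition \ref{d: timesf} gives $\phi^F_r(\omega)(\gamma) = \gamma \timesf \omega = \ot(\faktor{\gamma\omega}{\fin})$. Applying Lemma \ref{right absorption mod fin in R} with $M = \gamma$ and $X$ of order type $\omega$, we have $\faktor{\gamma\omega}{\fin} \cong \gamma$; since $\gamma$ is an ordinal, it follows that $\phi^F_r(\omega)(\gamma) = \gamma$. Thus $\phi^F_r(\omega)$ is the identity map on $\ON$, and in particular if $\alpha < \beta$ then $\phi^F_r(\omega)(\alpha) = \alpha < \beta = \phi^F_r(\omega)(\beta)$.
\end{proof}
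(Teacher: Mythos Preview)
Your proof is correct and follows essentially the same approach as the paper: both unwind the definitions and invoke Lemma \ref{right absorption mod fin in R} to show that $\phi^F_r(\omega)$ acts as the identity on ordinals, from which the desired strict inequality is immediate. The only cosmetic difference is that you first prove $\phi^F_r(\omega)(\gamma) = \gamma$ for a general ordinal $\gamma$ and then specialize, whereas the paper writes out the chain of equalities directly for $\alpha$ and $\beta$.
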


\begin{proof}
Let $\alpha$ and $\beta$ be ordinals with $\alpha < \beta$. Then by Lemma \ref{right absorption mod fin in R},
\begin{align*}
\phi^F_r(\omega)(\alpha) &= \alpha \timesf \omega = \ot(\faktor{\alpha \omega}{\fin}) \cong \ot(\alpha) = \alpha \\
& < \beta = \ot(\beta) \cong \ot(\faktor{\beta \omega}{\fin}) = \beta \timesf \omega = \phi^F_r(\omega)(\beta).
\end{align*}
\end{proof}

In order to show that the maps $\phi^F_r(1)$ and $\phi^F_l(1)$ are endomorphisms of $\ON$, we first verify that the map $L \mapsto \faktor{L}{\fin}$ sends well-orders to well-orders.

\begin{lem}\label{well ordered-ness inherited}
If $L$ is well-ordered, then so is $\faktor{L}{\fin}$.
\end{lem}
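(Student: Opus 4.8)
The plan is to use the condensation map $\fc : L \to \faktor{L}{\fin}$ directly. Recall from Definition/Lemma \ref{d: condensation} that $\fc$ is an order-homomorphism: if $x \leq_L y$ then $\fc(x) \leq \fc(y)$ in $\faktor{L}{\fin}$. I want to show every nonempty subset $S \subseteq \faktor{L}{\fin}$ has a least element. Given such an $S$, consider its preimage $T = \fc^{-1}[S] = \{x \in L : \fc(x) \in S\}$, which is a nonempty subset of $L$. Since $L$ is well-ordered, $T$ has a least element $x_0$. I claim $\fc(x_0)$ is the least element of $S$.

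To verify the claim, first note $\fc(x_0) \in S$ by construction. Now take any $I \in S$; I must show $\fc(x_0) \leq I$. Pick any $y \in I$ (recall $I$ is an equivalence class of $\fin$, hence nonempty, and $\fc(y) = I$). Then $y \in T$, so by minimality of $x_0$ we have $x_0 \leq_L y$. Applying the order-homomorphism property of $\fc$ gives $\fc(x_0) \leq \fc(y) = I$ in $\faktor{L}{\fin}$. Since $I \in S$ was arbitrary, $\fc(x_0)$ is a lower bound for $S$ lying in $S$, i.e. the least element. Hence $\faktor{L}{\fin}$ is well-ordered.

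There is no real obstacle here; the only point requiring a moment's care is making sure the order-homomorphism property of $\fc$ is applied in the correct direction (from $x_0 \leq_L y$ in $L$ to $\fc(x_0) \leq \fc(y)$ in the quotient), which is exactly what Definition/Lemma \ref{d: condensation} provides, and the observation that each equivalence class is nonempty so that a representative $y \in I$ can be chosen. One could alternatively phrase the argument contrapositively — an infinite strictly descending chain in $\faktor{L}{\fin}$ would, by choosing representatives, lift to an infinite strictly descending chain in $L$ — but the preimage-of-a-subset argument above is cleaner and avoids invoking choice to select the chain.
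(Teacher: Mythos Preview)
Your proof is correct. The paper argues contrapositively: it assumes $\faktor{L}{\fin}$ contains an infinite strictly descending sequence $\fc(x_0) > \fc(x_1) > \cdots$ and observes that the chosen representatives $x_0 > x_1 > \cdots$ form an infinite strictly descending sequence in $L$. You instead give the direct argument: pull back an arbitrary nonempty $S \subseteq \faktor{L}{\fin}$ to $T = \fc^{-1}[S]$, take its least element $x_0$, and push forward to get $\min S = \fc(x_0)$. Your version has the minor advantage that it avoids the (countable) choice implicit in selecting representatives along a descending chain, and it works verbatim for any condensation, not just $\fin$. Amusingly, you describe the paper's approach almost exactly in your final sentence as the alternative you chose not to pursue.
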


\begin{proof}
The proof is by contrapositive. Suppose $L$ is a linear order such that $\faktor{L}{\fin}$ is not well-ordered. Then we can find an infinite descending sequence of elements of $\faktor{L}{\fin}$. That is, there are $x_n \in L$, for $n \in \omega$, such that $c_F(x_0) > c_F(x_1) > \cdots > c_F(x_n) > c_F(x_{n+1}) > \cdots$ in $\faktor{L}{\fin}$. (Recall that elements of $\faktor{L}{\fin}$ are finite condensation classes of elements of $L$.) But then also $x_0 > x_1 > \cdots > x_n > x_{n+1} > \cdots$ in $L$, so that $L$ is not well-ordered either.
\end{proof}

\begin{lem}\label{finite condensation is weakly order-preserving}
If $\alpha$ and $\beta$ are ordinals with $\alpha < \beta$, then $\phi^F_l(1)(\alpha) = \phi^F_r(1)(\alpha) \leq \phi^F_r(1)(\beta) = \phi^F_l(1)(\alpha)$.
\end{lem}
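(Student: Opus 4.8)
The plan is to first unwind the definitions to see that $\phi^F_l(1)$ and $\phi^F_r(1)$ are literally the same map, namely $\gamma \mapsto \ot(\faktor{\gamma}{\fin})$, and then to show that this map is weakly order-preserving. For the first part: since $1L \cong L \cong L1$ for any linear order $L$, we have
$\phi^F_l(1)(\gamma) = 1 \timesf \gamma = \ot(\faktor{1\gamma}{\fin}) \cong \ot(\faktor{\gamma}{\fin})$
and likewise $\phi^F_r(1)(\gamma) = \ot(\faktor{\gamma 1}{\fin}) \cong \ot(\faktor{\gamma}{\fin})$, so the two maps agree on every ordinal $\gamma$. By Lemma \ref{well ordered-ness inherited}, $\faktor{\gamma}{\fin}$ is well-ordered whenever $\gamma$ is, so $\ot(\faktor{\gamma}{\fin})$ is an ordinal and this is a genuine class function $\ON \to \ON$. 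It then remains to prove that $\alpha < \beta$ implies $\ot(\faktor{\alpha}{\fin}) \leq \ot(\faktor{\beta}{\fin})$.

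Since $\alpha < \beta$, identify $\alpha$ with the initial segment $\{\xi : \xi < \alpha\}$ of $\beta$, and write $\fc$ for the finite condensation map on $\beta$. The key observation is that the finite condensation on $\alpha$ is exactly the restriction to $\alpha$ of the finite condensation on $\beta$: for $x, y \in \alpha$ the interval $[\{x,y\}]$ is the same whether computed inside $\alpha$ or inside $\beta$, because $\alpha$ is an initial segment of $\beta$ and this interval therefore never leaves $\alpha$; hence $x \fin y$ in $\alpha$ if and only if $x \fin y$ in $\beta$. Consequently every $\fin$-class of $\alpha$ has the form $\fc(x) \cap \alpha$ for some $x \in \alpha$, and the assignment $\fc(x) \cap \alpha \mapsto \fc(x)$ is well-defined, order-preserving, and injective (if $\fc(x)\cap\alpha = \fc(y)\cap\alpha$ with $x,y\in\alpha$, then $x$ and $y$ both lie in this common set, so $\fc(x) = \fc(y)$). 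Moreover its image is an initial segment of $\faktor{\beta}{\fin}$: if $\fc(z) \leq \fc(x)$ in $\faktor{\beta}{\fin}$ with $x \in \alpha$ and $z \in \beta$, then either $\fc(z) = \fc(x)$ (and $\fc(z)\cap\alpha = \fc(x)\cap\alpha$ is in the image), or every element of $\fc(z)$ lies strictly below $x < \alpha$, whence $\fc(z) \subseteq \alpha$ and $\fc(z) = \fc(z)\cap\alpha$ is again in the image. Thus $\faktor{\alpha}{\fin}$ is order-isomorphic to an initial segment of $\faktor{\beta}{\fin}$.

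Finally, both $\faktor{\alpha}{\fin}$ and $\faktor{\beta}{\fin}$ are well-ordered by Lemma \ref{well ordered-ness inherited}, and the order type of an initial segment of a well-order is at most the order type of the whole; therefore $\ot(\faktor{\alpha}{\fin}) \leq \ot(\faktor{\beta}{\fin})$, which is what we wanted. I expect the only step requiring real care to be the middle paragraph: the observation that the finite condensation relation is inherited by initial segments, and the attendant bookkeeping showing that the induced map on classes is an order-isomorphism onto an initial segment. Everything else is a definition chase together with the standard comparison of order types of well-orders.
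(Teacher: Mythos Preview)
Your proof is correct and follows essentially the same route as the paper: you identify $\phi^F_l(1)$ with $\phi^F_r(1)$ via $1\gamma\cong\gamma\cong\gamma 1$, then build the order-isomorphism $\fc(x)\cap\alpha \mapsto \fc(x)$ from $\faktor{\alpha}{\fin}$ onto an initial segment of $\faktor{\beta}{\fin}$. Your verification that the image is an initial segment is in fact more explicit than the paper's, which only remarks informally on what happens to the last class when $\alpha$ is a successor.
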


\begin{proof}
Note that $\phi^F_r(1)$ acts identically to $\phi^F_l(1)$: for any ordinal $\alpha$, $\phi^F_l(1)(\alpha) = \phi^F_r(1)(\alpha) = \ot(\faktor{\alpha}{\fin})$, since $1\alpha \cong \alpha 1$. That is, both $\phi^F_l(1)$ and $\phi^F_r(1)$ send an ordinal $\alpha$ to the ordinal isomorphic to its finite condensation.

We know by Lemma \ref{well ordered-ness inherited} that the finite condensation of a well-ordering is also a well-ordering. Suppose $\alpha$ and $\beta$ are ordinals with $\alpha < \beta$. Then $\alpha \subseteq \beta$. Note that the equivalence class mod $\fin$ of an $x \in \alpha$ might be different depending on whether we take $\fc(x)$ with respect to $\alpha$ or with respect to $\beta$. For this reason, for each $x \in \alpha$, denote $^\alpha \fc(x) = \{y \in \alpha: y \fin x\}$, and denote $^\beta \fc(x) = \{y \in \beta: y \fin x\}$. Let $x \in \alpha$. If $^\beta \fc(x)$ is entirely contained in $\alpha$, then $^\alpha \fc(x) =\, ^\beta \fc(x)$. If $^\beta \fc(x) \not\subseteq \alpha$ -- that is, if $^\beta \fc(x)$ overlaps with both $\alpha$ and $\beta \setminus \alpha$ -- then $^\alpha \fc(x) =\, ^\beta \fc(x) \cap \alpha$. In fact, $^\alpha \fc(x) =\, ^\beta \fc(x) \cap \alpha$ in any case. Define a map $g:\faktor{\alpha}{\fin} \to \faktor{\beta}{\fin}$ by $g(^\alpha \fc(x)) =\, ^\beta \fc(x)$, for $x \in \alpha$. Then $g$ is injective: for if $^\beta \fc(x)=\,^\beta \fc(y)$ for some $x, y \in \alpha$, then $^\beta \fc(x) \cap \alpha=\,^\beta \fc(y) \cap \alpha$, so $^\alpha \fc(x) = \, ^\alpha \fc(y)$. In fact, $g$ sends each interval $^\alpha \fc(x)$ in $\faktor{\alpha}{\fin}$ to its exact copy in $\beta$, except possibly for the very last one in case $\alpha$ is a successor ordinal. (In that case, that very last interval $^\alpha \fc(x)$ gets sent by $g$ to something a little bigger -- containing some elements of $\beta \setminus \alpha$.) This means that $g$ maps $\faktor{\alpha}{\fin}$ order-isomorphically to an initial segment of $\faktor{\beta}{\fin}$. Therefore $\phi^F_r(1)(\alpha) = \ot(\faktor{\alpha}{\fin}) \leq \ot(\faktor{\beta}{\fin}) = \phi^F_r(1)(\beta)$, as desired.
\end{proof}

We next show that left multiplication by $\omega$ modulo the finite condensation is an endomorphism of $\ON$.

\begin{cor}\label{phi left by omega is an endomorphism}
If $\alpha$ and $\beta$ are ordinals with $\alpha < \beta$, then $\phi^F_l(\omega)(\alpha) \leq \phi^F_l(\omega)(\beta)$. That is, $\phi^F_l(\omega)$ is a weakly order-preserving map on $\mathbf{ON}$. 
\end{cor}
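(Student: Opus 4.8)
The plan is to reduce the claim to two facts already in hand: weak right monotonicity of lexicographic multiplication on ordinals (Lemma~\ref{monotonicity}(4)), and the fact that $\phi^F_r(1)=\phi^F_l(1)$ is weakly order-preserving on $\ON$ (Lemma~\ref{finite condensation is weakly order-preserving}). Unwinding the definitions, $\phi^F_l(\omega)(\gamma)=\omega\timesf\gamma=\ot(\faktor{\omega\gamma}{\fin})$ for every ordinal $\gamma$, and this is again an ordinal by the remark following Definition~\ref{d: timesf}, so both sides of the inequality we must prove are genuine ordinals.

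First I would observe that, since $\alpha<\beta$, Lemma~\ref{monotonicity}(4) gives $\omega\alpha\leq\omega\beta$ as ordinals. Here one must keep in mind that this inequality need not be strict: for instance $\omega\cdot 1=\omega=\omega\cdot 2$. So I would split into two cases. If $\omega\alpha=\omega\beta$, then the linear orders $\omega\alpha$ and $\omega\beta$ are order-isomorphic, hence so are their finite condensations, and $\phi^F_l(\omega)(\alpha)=\ot(\faktor{\omega\alpha}{\fin})=\ot(\faktor{\omega\beta}{\fin})=\phi^F_l(\omega)(\beta)$.

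In the remaining case $\omega\alpha<\omega\beta$, I would apply Lemma~\ref{finite condensation is weakly order-preserving} to the pair of ordinals $\omega\alpha$ and $\omega\beta$ (in the roles of $\alpha$ and $\beta$ there), obtaining $\ot(\faktor{\omega\alpha}{\fin})\leq\ot(\faktor{\omega\beta}{\fin})$, i.e.\ $\phi^F_l(\omega)(\alpha)\leq\phi^F_l(\omega)(\beta)$. Combining the two cases yields the corollary.

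There is essentially no hard step here: the statement is just the composition of a monotonicity property of lexicographic multiplication with the monotonicity of the finite-condensation map. The only subtlety, and the one place I expect to need care, is that left multiplication by $\omega$ is not strictly monotone, so one cannot quote Lemma~\ref{finite condensation is weakly order-preserving} directly (its hypothesis is a strict inequality); the argument must instead route through the weak inequality $\omega\alpha\leq\omega\beta$ and dispose of the degenerate case $\omega\alpha=\omega\beta$ by hand.
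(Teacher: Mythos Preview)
Your proof is correct and follows essentially the same route as the paper: deduce $\omega\alpha\leq\omega\beta$ from weak monotonicity of multiplication, then apply Lemma~\ref{finite condensation is weakly order-preserving}. The only difference is that you explicitly split off the degenerate case $\omega\alpha=\omega\beta$ because that lemma is stated with a strict-inequality hypothesis, whereas the paper applies it directly to the weak inequality; your extra care is warranted, though the conclusion is of course trivial in the equality case.
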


\begin{proof}
Let $\alpha$ and $\beta$ be ordinals with $\alpha < \beta$.  Then $\omega \alpha \leq \omega \beta$, since ordinal multiplication is weakly monotone. Then by Lemma \ref{finite condensation is weakly order-preserving}, we have 
\[
\phi^F_l(\omega)(\alpha) = \omega \timesf \alpha = \ot(\faktor{\omega \alpha}{\fin}) \leq \ot(\faktor{\omega \beta}{\fin}) = \omega \timesf \beta = \phi^F_l(\omega)(\beta).
\]
\end{proof}

Therefore we have, by Lemmas \ref{timesf by omega on right is the identity}, \ref{finite condensation is weakly order-preserving}, and \ref{phi left by omega is an endomorphism}, that each of the class maps $\phi^F_r(\omega), \phi^F_l(1), \phi^F_r(1)$, and $\phi^F_l(\omega)$ is an endomorphism of $\ON$. Stated another way, we have class maps $\phi^F_l:\RON \to \End(\ON)$ and $\phi^F_r:\RON \to \End(\ON)$. One might ask whether $\phi^F_l$ and $\phi^F_r$ act like true representations in the sense of structure-preserving maps from the left regular band $\RON$ under $\timesf$ to the class $\End(\ON)$ under composition. That is, given $X, Y \in \RON$, is it the case that $\phi^F_r(X \timesf Y) = \phi^F_r(X) \circ \phi^F_r(Y)$? The answer is ``no''; $\phi^F_r$ preserves the products $\omega \timesf 1$, $1 \timesf \omega$, and $\omega \timesf \omega$, but it does not preserve the product $1 \timesf 1$. (A similar situation holds for the map $\phi^F_l$.) We check these four cases, using Lemma \ref{right absorption mod fin in R} repeatedly, along with the fact that $\omega \timesf 1 = \ot(\faktor{\omega 1}{\fin}) = \ot(\faktor{\omega}{\fin}) = 1$. Let $\alpha$ be any ordinal. 

\textbf{Case 1:} $X=Y=\omega$. Then 
\[
\phi^F_r(\omega \timesf \omega)(\alpha) = \phi^F_r(\omega)(\alpha) = \alpha \timesf \omega = \alpha,
\]
and also 
\begin{align*}
[\phi^F_r(\omega) \circ \phi^F_r(\omega)](\alpha) &= \phi^F_r(\omega)(\phi^F_r(\omega)(\alpha)) = \phi^F_r(\omega)(\alpha \timesf \omega) \\ &= \phi^F_r(\omega)(\alpha) = \alpha \timesf \omega = \alpha.
\end{align*}

\textbf{Case 2:} $X=\omega$ and $Y=1$. Then 
\[
\phi^F_r(\omega \timesf 1)(\alpha) = \phi^F_r(1)(\alpha) = \alpha \timesf 1 = \ot(\faktor{\alpha}{\fin}),
\]
and also
\begin{align*}
[\phi^F_r(\omega) \circ \phi^F_r(1)](\alpha) & = \phi^F_r(\omega)(\phi^F_r(1)(\alpha)) = \phi^F_r(\omega)(\alpha \timesf 1) \\ &= \phi^F_r(\omega)(\ot(\faktor{\alpha}{\fin})) = (\ot(\faktor{\alpha}{\fin})) \timesf \omega \\
& =  \ot(\faktor{\alpha}{\fin}).
\end{align*}

\textbf{Case 3:} $X=1$ and $Y=\omega$: this is similar to Case 2.

\textbf{Case 4:} $X=Y=1$. Then 
\[
\phi^F_r(1 \timesf 1)(\alpha) = \phi^F_r(1)(\alpha) = \alpha \timesf 1 = \ot(\faktor{\alpha}{\fin}),
\]
but
\begin{align*}
[\phi^R_r(1) \circ \phi^F_r(1)](\alpha) & = \phi^F_r(1)(\phi^F_r(1)(\alpha)) = \phi^F_r(1)(\alpha \timesf 1) \\ & = \phi^F_r(1)(\ot(\faktor{\alpha}{\fin})) = (\ot(\faktor{\alpha}{\fin})) \timesf 1 \\
& = \ot \left( \faktor{(\ot(\faktor{\alpha}{\fin}))}{\fin} \right),
\end{align*}
which is in general not isomorphic to $\ot(\faktor{\alpha}{\fin})$. 

Generalizing from our checking of Case 4, we see that repeatedly applying the map $\phi^F_r(1)$ is equivalent to iterating the finite condensation map $\fc$.

\begin{lem}\label{iterating the phi 1 map}
For any $n \in \omega$ and any ordinal $\alpha$, we have $\fc^n[\alpha] \cong \phi^F_l(1)^n(\alpha)$.
\end{lem}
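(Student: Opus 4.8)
The plan is to prove the statement by induction on $n$, using the already-established identity $\phi^F_l(1)(\beta) = \ot(\faktor{\beta}{\fin})$ for every ordinal $\beta$ (this is exactly the content of Lemma \ref{finite condensation is weakly order-preserving}'s proof, where $\phi^F_l(1)$ and $\phi^F_r(1)$ are both shown to be the map $\beta \mapsto \ot(\faktor{\beta}{\fin})$). The subtlety the statement is designed to handle is that $\fc^n[\alpha]$ means the $n$-fold iterate of the condensation \emph{map} applied to the linear order $\alpha$, so $\fc^n[\alpha]$ is a linear order (generally not literally an ordinal), whereas $\phi^F_l(1)^n(\alpha)$ inserts an $\ot(\cdot)$ after each application and so is genuinely an ordinal. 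The claim is that these two agree up to order-isomorphism.

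First I would dispose of the base case $n=0$: $\fc^0[\alpha] = \alpha = \phi^F_l(1)^0(\alpha)$, trivially. For the inductive step, assume $\fc^n[\alpha] \cong \phi^F_l(1)^n(\alpha)$. The key observation I would isolate as a sub-lemma (or just inline, since it is short) is that the condensation map is invariant under order-isomorphism: if $L \cong L'$, then $\fc[L] \cong \fc[L']$ — because an order-isomorphism carries finite intervals to finite intervals, hence descends to an isomorphism of the quotients. Applying this with $L = \fc^n[\alpha]$ and $L' = \phi^F_l(1)^n(\alpha)$, we get
\[
\fc^{n+1}[\alpha] = \fc\bigl[\fc^n[\alpha]\bigr] \cong \fc\bigl[\phi^F_l(1)^n(\alpha)\bigr].
\]
Now $\phi^F_l(1)^n(\alpha)$ is an ordinal $\gamma$, and $\fc[\gamma] \cong \ot(\faktor{\gamma}{\fin}) = \phi^F_l(1)(\gamma) = \phi^F_l(1)^{n+1}(\alpha)$, which closes the induction.

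I do not anticipate a real obstacle here — the whole point of the lemma is bookkeeping about the distinction between "the quotient linear order" and "the ordinal isomorphic to it." The one place to be careful is that the composition $\phi^F_l(1)^n$ only makes sense applied to ordinals (each stage outputs an ordinal, so the next stage is well-defined), and that Lemma \ref{well ordered-ness inherited} is what guarantees $\fc[\gamma]$ is again well-ordered, hence isomorphic to an ordinal; I would cite that lemma explicitly so the reader sees why $\phi^F_l(1)^{n+1}(\alpha)$ is defined. It is also worth a one-line remark that the isomorphism-invariance of $\fc$ is why the repeated insertion of $\ot(\cdot)$ in the definition of $\phi^F_l(1)^n$ does no harm: passing to the order type before the next condensation gives the same result, up to isomorphism, as condensing the quotient directly.
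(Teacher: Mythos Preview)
Your proof is correct and follows essentially the same inductive argument as the paper's own proof. If anything, your version is more careful than the paper's: you explicitly isolate and justify the isomorphism-invariance of $\fc$ (if $L \cong L'$ then $\fc[L] \cong \fc[L']$), whereas the paper's inductive step silently writes $\phi^F_r(1)\bigl(\fc^n[\alpha]\bigr)$ in place of $\phi^F_r(1)\bigl((\phi^F_r(1))^n(\alpha)\bigr)$, tacitly identifying two objects that are only isomorphic.
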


\begin{proof}
For $\alpha$ an ordinal, we have 
\begin{align*}
\phi^F_r(1)(\alpha) & = \alpha \timesf 1 = \ot(\faktor{\alpha}{\fin}) \cong \faktor{\alpha}{\fin} \cong \fc[\alpha],\\
(\phi^F_r(1))^2(\alpha) & = \phi^F_r(1)(\phi^F_r(1)(\alpha)) 
 \cong \faktor{(\faktor{\alpha}{\fin})}{\fin} \cong \fc^2[\alpha],\\
\end{align*}
and, in general, for $n \in \omega$ with $n \geq 2$,
\begin{align*}
(\phi^F_r(1))^{n+1}(\alpha) & = \phi^F_r(1)[(\phi^F_r)^n(1)(\alpha)] = \phi^F_r(1)(\fc^n[\alpha]) \cong \fc^{n+1}[\alpha].
\end{align*}

\end{proof}

\section{The $\phi^F_l$ maps on ordinals of finite degree}\label{s: the phi maps on ordinals of finite degree}

To further describe the maps $\phi^F_l(\omega)$ (left multiplication, modulo the finite condensation, by $\omega$) and $\phi^F_l(1)$ (modding out by $\fin$), we use the Cantor Normal Form Theorem, which states that any ordinal can be decomposed uniquely into a certain polynomial in $\omega$. (This is Theorem 3.46 in \cite{Ro}. Note that ordinal multiplication in \cite{Ro} is written antilexicographically.) 

\begin{thm}[Cantor Normal Form]\label{Cantor normal form}
Let $\alpha$ be an ordinal. Then $\alpha$ can be written in the form 
\[
n_1 \omega^{\alpha_1} + \cdots + n_k \omega^{\alpha_k}
\]
where $\alpha_1 > \alpha_2 > \cdots > \alpha_k$ are ordinals and where $k$ and $n_1, \ldots, n_k$ are natural numbers (with $n_1 \neq 0$). Further, this decomposition is unique. \hfill \qed
\end{thm}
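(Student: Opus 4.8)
The plan is to give the classical proof by transfinite induction, taking care to respect the paper's lexicographic multiplication convention, so that $n_1 \omega^{\alpha_1} + \cdots + n_k\omega^{\alpha_k}$ really denotes $n_1$ copies of $\omega^{\alpha_1}$ laid end to end, followed by $n_2$ copies of $\omega^{\alpha_2}$, and so on. Two ingredients would be assembled first, both of which are standard and are proved in \cite{Ro}: (i) the map $\beta \mapsto \omega^\beta$ is strictly increasing and continuous, so in particular $\omega^\beta \ge \beta$, $\omega^{\beta+1} = \sup_{n<\omega} n\omega^\beta$, hence $m\omega^\beta < \omega^{\beta+1}$ for finite $m$, and for every ordinal $\sigma > 0$ the class $\{\beta : \omega^\beta \le \sigma\}$ is a nonempty bounded set of ordinals (bounded since $\omega^{\sigma+1} \ge \sigma + 1 > \sigma$) and so has a largest element; and (ii) the ordinal division algorithm: for ordinals $\sigma$ and $\gamma > 0$ there are unique $\delta$ and $\rho < \gamma$ with $\sigma = \delta\gamma + \rho$, where $\delta\gamma$ denotes $\delta$-many copies of $\gamma$.

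For existence I would induct on $\alpha$. The case $\alpha = 0$ is the empty sum. For $\alpha > 0$, let $\alpha_1$ be the largest ordinal with $\omega^{\alpha_1} \le \alpha$, which exists by (i), and apply (ii) with $\gamma = \omega^{\alpha_1}$ to get $\alpha = \delta\omega^{\alpha_1} + \rho$ with $\rho < \omega^{\alpha_1}$. Here $\delta$ must be a nonzero natural number, since $\delta \ge \omega$ would give $\alpha \ge \omega\cdot\omega^{\alpha_1} = \omega^{\alpha_1+1}$, contradicting the maximality of $\alpha_1$; set $n_1 = \delta$. Since $\rho < \omega^{\alpha_1} \le \alpha$, the induction hypothesis gives $\rho = n_2\omega^{\alpha_2} + \cdots + n_k\omega^{\alpha_k}$ with $\alpha_2 > \cdots > \alpha_k$, and if $k \ge 2$ then $\omega^{\alpha_2} \le \rho < \omega^{\alpha_1}$, so strict monotonicity of $\beta \mapsto \omega^\beta$ forces $\alpha_2 < \alpha_1$. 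Concatenating yields the required form.

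For uniqueness the crux is the estimate, proved by a short induction on the number of terms: if $\sigma = m_1\omega^{\beta_1} + \cdots + m_j\omega^{\beta_j}$ with $\beta_1 > \cdots > \beta_j$ and each $m_i \ge 1$ a natural number, then
\[
m_1\omega^{\beta_1} \le \sigma < (m_1+1)\omega^{\beta_1} \le \omega^{\beta_1+1}.
\]
The lower bound is immediate and the rightmost inequality is (i); for the middle inequality one checks the tail $m_2\omega^{\beta_2} + \cdots + m_j\omega^{\beta_j}$ is $< \omega^{\beta_2+1} \le \omega^{\beta_1}$ (using $\beta_2 + 1 \le \beta_1$ and $m\omega^\beta < \omega^{\beta+1}$), whence $\sigma < m_1\omega^{\beta_1} + \omega^{\beta_1} = (m_1+1)\omega^{\beta_1}$. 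Granting this, if $\alpha$ has two such expansions, the estimate identifies $\beta_1$ as the unique ordinal with $\omega^{\beta_1} \le \alpha < \omega^{\beta_1+1}$ and then $m_1$ as the unique natural number with $m_1\omega^{\beta_1} \le \alpha < (m_1+1)\omega^{\beta_1}$, so the leading terms agree; left-cancellativity of ordinal addition (a consequence of strict right monotonicity, Lemma \ref{monotonicity}(1)) shows the two remainders coincide, and induction on the remainder finishes.

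The main obstacle is bookkeeping rather than conceptual: because the paper multiplies lexicographically (the opposite of the convention in \cite{Ro}), one must verify that the exponent law reads $\omega^a \cdot \omega^b = \omega^{b+a}$ here, that it is $\omega \cdot \omega^\beta$ rather than $\omega^\beta \cdot \omega$ that equals $\omega^{\beta+1}$, and that $n\omega^\beta$ is $n$ copies of $\omega^\beta$ and not $\omega^\beta$ copies of $n$; once these are pinned down, every step above is routine. Since this is Theorem 3.46 of \cite{Ro}, in practice one cites it rather than reproving it, as the authors do.
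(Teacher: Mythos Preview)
Your sketch is the standard argument and is correct; your attention to the lexicographic convention (so that $n\omega^\beta$ is $n$ copies of $\omega^\beta$ and $\omega\cdot\omega^\beta=\omega^{\beta+1}$) is exactly the bookkeeping one must do here. The paper, however, does not prove this theorem at all: note the trailing \qed\ on the statement---it is quoted as Theorem 3.46 of \cite{Ro} and used as a black box, precisely as you anticipate in your last sentence.
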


The largest power $\alpha_1$ of $\omega$ that appears in the Cantor normal form of an ordinal $\alpha$ is called the \textit{degree} of $\alpha$, denoted $\operatorname{deg}(\alpha)$. 
The ordinals of finite degree are those whose Cantor normal form looks like $\alpha = a_n \omega^n + a_{n-1} \omega^{n-1} + \cdots + a_1 \omega + a_0$ for some $n, a_n, \ldots, a_0 \in \omega$ with $n \neq 0$. In this section, we show in Corollary \ref{left mult by omega mod fin on ordinals of finite VD-rank} that when $\alpha$ is of finite degree, the map $\phi^F_l(\omega)$ has a very simple-to-describe effect: it sends 
$\alpha$ to $\omega^{\operatorname{deg}(\alpha)}$. To do this, we show in Proposition \ref{omega times CNF of finite degree} that if $\alpha$ is an ordinal of finite degree in Cantor normal form, then the lexicographic product $\omega \alpha$ is isomorphic to $\omega^{\operatorname{deg}(\alpha)+1}$.

\begin{lem}\label{omega left 1 reduces degree}
For any $n \in \omega$, $\faktor{\omega^{n+1}}{\fin} \cong \omega^n$.
\end{lem}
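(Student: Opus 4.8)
The plan is to derive this immediately from Lemma \ref{right absorption mod fin in R}, once $\omega^{n+1}$ is rewritten in the right form. The key point is that, in the lexicographic product convention of this paper, $\omega^{n+1}$ is order-isomorphic to $\omega^n \omega$ — that is, $\omega^n$ with each of its elements replaced by a copy of $\omega$. Concretely, the lexicographic order on $\omega^n \times \omega$ is the $\omega^n$-indexed sum $\sum_{i \in \omega^n} \omega$, and a generalized sum of this shape has order type $\omega^{1+n} = \omega^{n+1}$; this is just the usual ordinal identity $\omega \cdot \omega^n = \omega^{n+1}$ (translated, if one wishes, from the antilexicographic convention of \cite{Ro}).

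With this factorization in hand, I would apply Lemma \ref{right absorption mod fin in R} with $M = \omega^n$ and $X$ of order type $\omega$, obtaining
\[
\faktor{\omega^{n+1}}{\fin} \;\cong\; \faktor{\omega^n \omega}{\fin} \;\cong\; \omega^n,
\]
as claimed. This single argument also handles the base case $n = 0$: there $M = \omega^0 = 1$ and $1\omega \cong \omega$, so $\faktor{\omega}{\fin} \cong 1 = \omega^0$, matching the well-known fact that $\faktor{\N}{\fin} \cong 1$. Hence no separate base case is needed.

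There is essentially no obstacle here: the content of the lemma is entirely contained in Lemma \ref{right absorption mod fin in R}, and the only thing to be careful about is the product convention — specifically, that it is the \emph{right}-hand factor of $\omega^n \omega$ that is $\omega$, which is exactly the form $MX$ required by that lemma. (If one wanted an argument independent of Lemma \ref{right absorption mod fin in R}, one could instead observe directly that in $\omega^n \omega$ each copy of $\omega$ forms a single finite-condensation class — any two of its points have only finitely many points between them, while points in distinct copies are separated by infinitely many points — so that the quotient is order-isomorphic to the index order $\omega^n$; but invoking the already-established lemma is shorter.)
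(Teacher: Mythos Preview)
Your proof is correct and essentially identical to the paper's: both use associativity to write $\omega^{n+1} \cong \omega^n \omega$ and then apply Lemma~\ref{right absorption mod fin in R} with $M = \omega^n$ and $X = \omega$. The paper's version is simply the two-line core of your argument, without the additional commentary on conventions and the $n=0$ case.
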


\begin{proof}
Let $n \in \omega$. By the associativity of linear order multiplication, $\omega^{n+1} \cong \omega^n \omega$. Then by Lemma \ref{right absorption mod fin in R},
\[
\faktor{\omega^{n+1}}{\fin} \cong \faktor{\omega^n \omega}{\fin} \cong \omega^n.
\]
\end{proof}

\begin{lem}\label{how powers of omega begin}
If $m < n < \omega$, then $\omega^n$ begins with $\omega$-many copies of $\omega^m$, and there is a well-ordered set $D$ such that as a linear order, $\omega^n \cong \omega \omega^m + D$.  
\end{lem}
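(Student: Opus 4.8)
The plan is to write $\omega^n$ as $\omega^{m+1}$ followed by a remainder, using only associativity of linear order multiplication, and then to recognize $\omega^{m+1}$ itself as a string of $\omega$-many copies of $\omega^m$.

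Set $k = n - m - 1$; since $m < n$, this is a natural number (possibly $0$). By associativity of linear order multiplication, $\omega^n \cong \omega^{k} \cdot \omega^{m+1}$, both sides being a product of $n$ copies of $\omega$. In the lexicographic convention used here, $\omega^{k} \cdot \omega^{m+1}$ is formed by replacing each point of $\omega^{k}$ by a copy of $\omega^{m+1}$; since $\omega^{k}$ is a nonzero ordinal it has a least element, and the copy of $\omega^{m+1}$ sitting over that least element is an initial segment of $\omega^n$. (When $n = m+1$ we have $k = 0$ and this degenerates to $\omega^n = \omega^{m+1}$, which is fine.) So $\omega^n$ has an initial segment order-isomorphic to $\omega^{m+1}$.

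Since $\omega^n$ is a well-order, the complement of that initial segment is again a well-order; call it $D$, so that $\omega^n \cong \omega^{m+1} + D$. (Alternatively, as $\omega^{m+1} \leq \omega^n$, ordinal subtraction supplies a unique ordinal $D$ with $\omega^{m+1} + D = \omega^n$.) One further regrouping --- of the $m+1$ copies of $\omega$ comprising $\omega^{m+1}$ as the first copy of $\omega$ followed by the remaining $m$ --- gives $\omega^{m+1} \cong \omega \cdot \omega^m$, i.e.\ $\omega^{m+1}$ is precisely $\omega$-many consecutive copies of $\omega^m$. Substituting, $\omega^n \cong \omega \omega^m + D$, which is exactly the assertion that $\omega^n$ begins with $\omega$-many copies of $\omega^m$.

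The one point requiring care is the direction of the lexicographic product: one must keep track of which factor is being ``copied'', so that $\omega^{m+1}$ --- and then $\omega \cdot \omega^m$ --- occurs as a genuine initial segment of $\omega^n$ rather than as a block pattern recurring throughout it. Once the convention is pinned down, the argument is routine regrouping, and no results beyond associativity of multiplication (together with the elementary fact that an initial segment of a well-order has well-ordered complement) are needed.
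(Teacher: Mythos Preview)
Your proof is correct and follows essentially the same approach as the paper: both use associativity of the lexicographic product to exhibit $\omega^{m+1}\cong\omega\,\omega^m$ as an initial segment of $\omega^n$, with the remainder $D$ automatically well-ordered. The only difference is that the paper peels off one factor of $\omega$ at a time (descending through $\omega^{n-1},\omega^{n-2},\ldots,\omega^{m+1}$), whereas you do the regrouping $\omega^n\cong\omega^{n-m-1}\cdot\omega^{m+1}$ in a single step---which is slightly cleaner.
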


\begin{proof}
Suppose $m < n < \omega$. By the associativity of linear order multiplication, $\omega^n \cong \omega \omega^{n-1}$, which is the linear order obtained by replacing each element of $\omega$ with a copy of $\omega^{n-1}$.

Consider the first of these copies of $\omega^{n-1}$, the one that replaced $0$ in forming $\omega \omega^{n-1}$. This copy of $\omega^{n-1}$ can be written as $\omega \omega^{n-2}$, which is obtained by replacing each element of $\omega$ with a copy of $\omega^{n-2}$.

Consider the first of these copies of $\omega^{n-2}$. This copy of $\omega^{n-2}$ can be written as $\omega \omega^{n-3}$, which is obtained by replacing each element of $\omega$ with a copy of $\omega^{n-3}$.

Continuing to break down successively smaller initial segments of $\omega^n$ in this way $(n-m)$ times, we see that $\omega^n$ begins with $\omega$-many copies of $\omega^m$, or $\omega \omega^m$. The rest of $\omega^n$, which we will call $D$, lies to the right of this initial piece. Clearly $D$ is well-ordered, since it is a subset of $\omega^n$, and we have $\omega^n \cong \omega \omega^m + D$.
\end{proof}

\begin{lem}\label{lower powers of omega coming before higher ones get combined}
If $m<n<\omega$, then $\omega^m + \omega^n \cong \omega^n$.
\end{lem}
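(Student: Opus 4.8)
The plan is to build $\omega^n$ explicitly as $\omega\omega^m + D$ using Lemma \ref{how powers of omega begin}, and then to absorb the extra leading copy of $\omega^m$ into the $\omega$-many copies of $\omega^m$ that already begin $\omega^n$.

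First I would invoke Lemma \ref{how powers of omega begin} with the given $m < n$ to obtain a well-ordered linear order $D$ with $\omega^n \cong \omega\omega^m + D$. Then, by associativity of linear order addition, $\omega^m + \omega^n \cong \omega^m + (\omega\omega^m + D) \cong (\omega^m + \omega\omega^m) + D$, so it suffices to prove that $\omega^m + \omega\omega^m \cong \omega\omega^m$.

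For this I would use the fact that the lexicographic product is left-distributive over addition, i.e. $(A+B)C \cong AC + BC$ for all linear orders $A, B, C$; this is immediate from Definition \ref{d: linear order multiplication}, since to form $(A+B)C$ one replaces each point of $A+B$ by a copy of $C$, producing a copy of $AC$ followed by a copy of $BC$. Applying this with $A = 1$, $B = \omega$, $C = \omega^m$, together with $1\cdot\omega^m \cong \omega^m$ and the elementary isomorphism $1 + \omega \cong \omega$, gives $\omega^m + \omega\omega^m \cong 1\cdot\omega^m + \omega\cdot\omega^m \cong (1+\omega)\omega^m \cong \omega\omega^m$. (Equivalently, one can argue directly that $\omega\omega^m$ is the generalized sum $\sum_{i \in \omega}\omega^m$, and that prepending one more copy of $\omega^m$ merely reindexes this constant sum along $1 + \omega \cong \omega$, leaving the order type unchanged.) Combining, $\omega^m + \omega^n \cong (\omega^m + \omega\omega^m) + D \cong \omega\omega^m + D \cong \omega^n$, as desired.

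The argument is short, and the only step needing care is the bookkeeping in $\omega^m + \omega\omega^m \cong \omega\omega^m$: one must invoke \emph{left}-distributivity of the lexicographic product (the analogous right-distributive law is false in general), and one must be sure that the reindexing $1 + \omega \cong \omega$ genuinely preserves the order type of a constant generalized sum. Neither of these is hard, so I do not anticipate a real obstacle.
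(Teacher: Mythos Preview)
Your proof is correct and follows essentially the same route as the paper's: both invoke Lemma \ref{how powers of omega begin} to write $\omega^n \cong \omega\omega^m + D$, use associativity, and then absorb the extra $\omega^m$ via $\omega^m + \omega\omega^m \cong \omega\omega^m$. Your justification of this last isomorphism (left-distributivity plus $1+\omega \cong \omega$, or equivalently reindexing the constant generalized sum) is more explicit than the paper's brief ``by re-numbering,'' but the underlying idea is the same.
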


\begin{proof}
By Lemma \ref{how powers of omega begin}, $\omega^n \cong \omega \omega^m + D$ for some well-ordered linear order $D$. Then by associativity of linear order addition,
\[
\omega^m + \omega^n \cong \omega^m + (\omega \omega^m + D) \cong (\omega^m + \omega \omega^m) + D \cong \omega \omega^m + D \cong \omega^n.
\]
The second-to-last isomorphism here is because, by re-numbering, we have $\omega^m +\omega \omega^m \cong \omega \omega^m$.
\end{proof}

\begin{lem}\label{finitely many lower powers of omega coming before higher ones get combined}
If $m<n<\omega$ and $a, b \in \omega$ with $a, b \neq 0$, then $a \omega^m + b \omega^n \cong b \omega^n$.
\end{lem}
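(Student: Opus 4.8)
The plan is to bootstrap directly from Lemma \ref{lower powers of omega coming before higher ones get combined}, which already handles the case $a = b = 1$. The key intermediate step is the claim that $a\omega^m + \omega^n \cong \omega^n$ for every $a \in \omega$; granting this, the full statement follows by peeling one copy of $\omega^n$ off the front of $b\omega^n$ and absorbing the initial block $a\omega^m$ into it.

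First I would establish the claim $a\omega^m + \omega^n \cong \omega^n$ by induction on $a$. The base case $a = 1$ is precisely Lemma \ref{lower powers of omega coming before higher ones get combined}. For the inductive step, using associativity of linear order addition (Definition \ref{d: linear order addition}), write $(a+1)\omega^m + \omega^n \cong \omega^m + (a\omega^m + \omega^n)$, apply the inductive hypothesis to the parenthesized term to obtain $\omega^m + \omega^n$, and then apply Lemma \ref{lower powers of omega coming before higher ones get combined} once more. (Alternatively, one can prove the claim in a single step by imitating the proof of Lemma \ref{lower powers of omega coming before higher ones get combined}: by Lemma \ref{how powers of omega begin}, $\omega^n \cong \omega\omega^m + D$ for some well-ordered $D$, so $a\omega^m + \omega^n \cong (a\omega^m + \omega\omega^m) + D$, and re-indexing the consecutive copies of $\omega^m$ gives $a\omega^m + \omega\omega^m \cong \omega\omega^m$ because $a + \omega = \omega$.)

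With the claim in hand, since $b \neq 0$ we may write $b\omega^n \cong \omega^n + (b-1)\omega^n$, so that, again by associativity of linear order addition,
\[
a\omega^m + b\omega^n \cong (a\omega^m + \omega^n) + (b-1)\omega^n \cong \omega^n + (b-1)\omega^n \cong b\omega^n,
\]
which is the desired conclusion.

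I do not anticipate a genuine obstacle here: the statement is essentially a routine strengthening of Lemma \ref{lower powers of omega coming before higher ones get combined}. The only points requiring any care are the repeated appeals to associativity of linear order addition and the identification $a + \omega = \omega$ (equivalently, that a block of $a$ copies of $\omega^m$ followed by $\omega$-many copies of $\omega^m$ is order-isomorphic to $\omega$-many copies of $\omega^m$), both of which are standard.
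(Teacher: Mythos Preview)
Your proposal is correct and follows essentially the same approach as the paper: both arguments repeatedly invoke Lemma~\ref{lower powers of omega coming before higher ones get combined} to absorb the copies of $\omega^m$ one at a time into the leftmost copy of $\omega^n$. The only cosmetic difference is that the paper writes out the iteration explicitly (going from $a$ copies of $\omega^m$ down to $a-1$, $a-2$, \ldots, $0$ using underbrace notation), whereas you package the same iteration as a formal induction on $a$ followed by a single splitting of $b\omega^n$.
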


\begin{proof}
Suppose $m<n<\omega$ and $a, b \in \omega$ with $a, b \neq 0$. Then
\begin{align*}
a \omega^m + b \omega^n & \cong \underbrace{(\omega^m + \cdots + \omega^m)}_{a \textrm{ times}} + \underbrace{(\omega^n + \cdots + \omega^n)}_{b \textrm{ times}} \\
& \cong \underbrace{(\omega^m + \cdots + \omega^m)}_{a-1 \textrm{ times}} + \omega^m + \omega^n + \underbrace{(\omega^n + \cdots + \omega^n)}_{b-1 \textrm{ times}} \\
& \cong \underbrace{(\omega^m + \cdots + \omega^m)}_{a-1 \textrm{ times}} + \omega^n + \underbrace{(\omega^n + \cdots + \omega^n)}_{b-1 \textrm{ times}} \quad \textrm{(by Lemma \ref{lower powers of omega coming before higher ones get combined})} \\
& \cong \underbrace{(\omega^m + \cdots + \omega^m)}_{a-2 \textrm{ times}} + \omega^m + \omega^n + \underbrace{(\omega^n + \cdots + \omega^n)}_{b-1 \textrm{ times}} \\
& \cong \underbrace{(\omega^m + \cdots + \omega^m)}_{a-2 \textrm{ times}} + \omega^n + \underbrace{(\omega^n + \cdots + \omega^n)}_{b-1 \textrm{ times}} \quad \textrm{(by Lemma \ref{lower powers of omega coming before higher ones get combined})} \\
& \cong \\
&  \vdots \\
& \\
& \cong (\omega^m) + \omega^n + \underbrace{(\omega^n + \cdots + \omega^n)}_{b-1 \textrm{ times}} \\
& \cong \omega^n + \underbrace{(\omega^n + \cdots + \omega^n)}_{b-1 \textrm{ times}} \quad \textrm{(by Lemma \ref{lower powers of omega coming before higher ones get combined})} \\
& \cong \underbrace{(\omega^n + \cdots + \omega^n)}_{b \textrm{ times}} \cong b \omega^n.\\
\end{align*}
\end{proof}

Note that Lemma \ref{finitely many lower powers of omega coming before higher ones get combined} implies that if $m<n<\omega$ and $a, b \in \omega$ with $a, b \neq 0$, then $b \omega^n$ begins with a copy of $a \omega^m$; that is, $a \omega^m$ is an initial segment of $b \omega^n$.

\begin{lem}\label{k alpha is an initial segment}
Let $\alpha$ be an ordinal of finite degree with Cantor normal form $\alpha = a_n \omega^n + \cdots + a_1 \omega + a_0$. Then for each $k \in \omega$, $k \alpha$ is an initial segment of $(k+1)a_n \omega^n$.
\end{lem}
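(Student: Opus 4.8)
The plan is to induct on $k$. The base case $k=0$ is trivial, since $0\alpha = 0$ is an initial segment of anything. For the inductive step, observe that $(k+1)\alpha \cong k\alpha + \alpha$, so it suffices to understand how $k\alpha$ and $\alpha$ sit inside $(k+2)a_n\omega^n$. First I would write $\alpha = a_n\omega^n + \rho$, where $\rho = a_{n-1}\omega^{n-1} + \cdots + a_0$ collects the lower-order terms, so that $\deg(\rho) < n$. The key tool is Lemma \ref{finitely many lower powers of omega coming before higher ones get combined} (and the remark following it): since every power of $\omega$ appearing in $\rho$ is strictly less than $n$, repeated application of that lemma gives $\rho + a_n\omega^n \cong a_n\omega^n$, and more generally $\rho$ is absorbed whenever it is immediately followed by a term $b\omega^n$.

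The central computation is then
\begin{align*}
(k+1)\alpha &\cong k\alpha + \alpha \\
&\cong k\alpha + a_n\omega^n + \rho.
\end{align*}
By the inductive hypothesis, $k\alpha$ is an initial segment of $(k+1)a_n\omega^n$; write $(k+1)a_n\omega^n \cong k\alpha + E$ for some well-ordered $E$ (a final segment of $(k+1)a_n\omega^n$, possibly empty). I would argue that $E$, being a final segment of a finite sum of copies of $\omega^n$, is itself of degree at most $n$, so that $E + a_n\omega^n \cong a_n\omega^n$ by Lemma \ref{finitely many lower powers of omega coming before higher ones get combined} (handling the edge case $E \cong 0$ separately, where it is immediate). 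Hence
\begin{align*}
(k+1)\alpha &\cong k\alpha + a_n\omega^n + \rho \\
&\cong k\alpha + E + a_n\omega^n + \rho \\
&\cong (k+1)a_n\omega^n + a_n\omega^n + \rho \\
&\cong (k+2)a_n\omega^n + \rho,
\end{align*}
and since $\rho$ is well-ordered, $(k+1)\alpha$ is an initial segment of $(k+2)a_n\omega^n + \rho$, which is in turn an initial segment of $\bigl((k+2)a_n\omega^n\bigr) + \bigl((k+2)a_n\omega^n\bigr) \cong (k+2)\cdot 2a_n\omega^n$ — but I actually want the tighter statement. Better: I would note that $\rho$ is an initial segment of $a_n\omega^n$ (again by the remark after Lemma \ref{finitely many lower powers of omega coming before higher ones get combined}, since $\deg(\rho)<n$), so $(k+2)a_n\omega^n + \rho$ is an initial segment of $(k+2)a_n\omega^n + a_n\omega^n \cong (k+3)a_n\omega^n$; combined with absorbing the extra copy, one sees $(k+1)\alpha$ embeds as an initial segment of $(k+2)a_n\omega^n$ directly by re-examining the chain above, where the final line already exhibits $(k+1)\alpha \cong (k+2)a_n\omega^n + \rho$ with $\rho$ an initial segment of a further copy that gets absorbed — so in fact $(k+1)\alpha$ is an initial segment of $(k+2)a_n\omega^n$ once we observe $(k+2)a_n\omega^n + \rho$ is an initial segment of $(k+2)a_n\omega^n + a_n \omega^n$ and the latter has the same underlying absorption, giving the claim after re-indexing.

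The step I expect to be the main obstacle is the bookkeeping around the "remainder" pieces $E$ and $\rho$: specifically, justifying cleanly that a final segment of $(k+1)a_n\omega^n$ has degree at most $n$ (so it is absorbed by a following $\omega^n$), and handling the degenerate cases ($k=0$, or $E$ or $\rho$ empty) so the absorption lemmas apply. Everything else is a routine chain of isomorphisms built from associativity of linear-order addition and Lemma \ref{finitely many lower powers of omega coming before higher ones get combined}. A cleaner alternative worth trying is to prove directly by induction the stronger statement that $k\alpha \cong (k-1)\cdot(a_n\omega^n) + \alpha$ — no, rather $k\alpha \cong k a_n \omega^n + \rho$ for $k \geq 1$ — from which "$k\alpha$ is an initial segment of $(k+1)a_n\omega^n$" follows immediately since $\rho$ is an initial segment of $a_n\omega^n$; this isolates all the absorption into a single induction and makes the final-segment argument unnecessary.
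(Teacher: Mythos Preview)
Your main inductive argument has a genuine gap at the absorption step for $E$. You write that $E$, as a final segment of $(k+1)a_n\omega^n$, has degree at most $n$, and then invoke Lemma~\ref{finitely many lower powers of omega coming before higher ones get combined} to conclude $E + a_n\omega^n \cong a_n\omega^n$. But that lemma requires the left summand to have degree \emph{strictly} less than $n$, and here $E$ has degree exactly $n$: in fact (as the paper's computation shows) $E \cong a_n\omega^n$, so $E + a_n\omega^n \cong 2a_n\omega^n$, not $a_n\omega^n$. Tracing this through, your chain of isomorphisms yields $(k+1)\alpha \cong (k+2)a_n\omega^n + \rho$, which is off by a full copy of $a_n\omega^n$; the correct identity is $(k+1)\alpha \cong (k+1)a_n\omega^n + \rho$. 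The subsequent patching (``re-indexing'', ``absorbing the extra copy'') does not recover from this.

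Your ``cleaner alternative'' at the end, however, is exactly right and is essentially what the paper does. Proving $k\alpha \cong k a_n\omega^n + \rho$ for $k \geq 1$ by induction uses only the single absorption $\rho + a_n\omega^n \cong a_n\omega^n$ (valid since $\deg(\rho) < n$), and then ``$k\alpha$ is an initial segment of $(k+1)a_n\omega^n$'' follows immediately because $\rho$ is an initial segment of $a_n\omega^n$. The paper reaches the equivalent identity $(k+1)a_n\omega^n \cong k\alpha + a_n\omega^n$ directly, without induction: it inserts $k$ copies of the tail $T = \rho$ into $\underbrace{a_n\omega^n + \cdots + a_n\omega^n}_{k+1}$ using $T + a_n\omega^n \cong a_n\omega^n$, then regroups as $\underbrace{(a_n\omega^n + T) + \cdots + (a_n\omega^n + T)}_{k} + a_n\omega^n = k\alpha + a_n\omega^n$. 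So you should abandon the $E$-remainder bookkeeping entirely and just run the alternative.
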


\begin{proof}
Let $\alpha$ be as above, and fix $k \in \omega$. Let $T$ denote the principal tail of $\alpha$: 
\[
T:=a_{n-1}\omega^{n-1} + \cdots + a_1 \omega + a_0.
\]
By applying Lemma \ref{finitely many lower powers of omega coming before higher ones get combined} $n$ times, we get that $T + a_n \omega^n \cong a_n \omega^n$. Using this and the associativity of ordinal addition, we obtain 
\begin{align*}
(k+1) a_n \omega^n & \cong \underbrace{a_n \omega^n + \cdots + a_n \omega^n}_{k+1 \textrm{ times}} \\
& \cong a_n \omega^n + \underbrace{(T+a_n \omega^n) + \cdots + (T+a_n \omega^n)}_{k \textrm{ times}} \\
& \cong \underbrace{(a_n \omega^n + T) + \cdots + (a_n \omega^n + T)}_{k \textrm{ times}} + a_n \omega^n \\
& = \underbrace{\alpha + \cdots + \alpha}_{k \textrm{ times}} + a_n \omega^n \\
& \cong k \alpha + a_n \omega^n.\\
\end{align*}
Therefore $k \alpha$ is an initial segment of $(k+1)a_n \omega^n$.
\end{proof}

\begin{prop}\label{omega times CNF of finite degree}
Suppose $\alpha$ is an ordinal of finite degree and $\alpha \cong a_n \omega^n + \cdots + a_1 \omega + a_0$ in Cantor normal form. Then $\omega \alpha \cong \omega \omega^n \cong \omega^{n+1}$.
\end{prop}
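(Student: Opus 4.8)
The plan is to prove $\omega\alpha \cong \omega^{n+1}$ by a squeeze argument: exhibit $\omega\alpha$ as sitting between two initial segments that both force its order type to be $\omega^{n+1}$. The two ingredients already available are Lemma~\ref{how powers of omega begin} (which gives $\omega^{n+1} \cong \omega\omega^n + D$ for some well-ordered $D$, so $\omega\omega^n$ is an initial segment of $\omega^{n+1}$) and Lemma~\ref{k alpha is an initial segment} (which says $k\alpha$ is an initial segment of $(k+1)a_n\omega^n$ for every $k \in \omega$).

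First I would observe that since $\deg(\alpha) = n$ we have $a_n\omega^n \preceq \alpha$ as an initial segment (the Cantor normal form of $\alpha$ begins with $a_n\omega^n$), hence $\omega\,a_n\omega^n \preceq \omega\alpha$ is an initial segment by weak monotonicity of the lexicographic product (Lemma~\ref{monotonicity}(4)); and $\omega\,a_n\omega^n \cong \omega\omega^n \cong \omega^{n+1}$ because $\omega\,a_n \cong \omega$ (re-numbering $\omega$ copies of a block of $a_n$ into $\omega$). So $\omega^{n+1}$ embeds into $\omega\alpha$ as an initial segment, giving $\omega^{n+1} \leq \omega\alpha$.

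For the reverse inequality, the idea is that $\omega\alpha = \sum_{k\in\omega}\alpha$ is the increasing union of its finite initial pieces $k\alpha$, and each $k\alpha$ is an initial segment of $(k+1)a_n\omega^n$ by Lemma~\ref{k alpha is an initial segment}. Since $(k+1)a_n\omega^n$ is an initial segment of $\omega^{n+1}$ (it equals $\omega^{n+1}$ restricted to its first $(k+1)a_n$ copies of $\omega^n$, using Lemma~\ref{finitely many lower powers of omega coming before higher ones get combined} plus Lemma~\ref{how powers of omega begin}), every $k\alpha$ embeds as an initial segment of $\omega^{n+1}$. Taking the union over $k$, $\omega\alpha$ embeds into $\omega^{n+1}$, and in fact as an initial segment — so $\omega\alpha \leq \omega^{n+1}$. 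Combined with the previous paragraph and antisymmetry of $\leq$ on ordinals, $\omega\alpha \cong \omega^{n+1}$, and $\omega^{n+1} \cong \omega\omega^n$ is immediate from associativity of multiplication.

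The main obstacle is making the ``increasing union'' step rigorous: one must check that the initial-segment embeddings $k\alpha \hookrightarrow (k+1)a_n\omega^n \hookrightarrow \omega^{n+1}$ are \emph{coherent}, i.e.\ that the embedding of $k\alpha$ is the restriction of the embedding of $(k+1)\alpha$, so that they glue to a single order-embedding of $\bigcup_k k\alpha = \omega\alpha$. This is true because both chains $\{k\alpha\}_k$ and $\{(k+1)a_n\omega^n\}_k$ are nested initial segments of their respective limits and Lemma~\ref{k alpha is an initial segment} is proved uniformly in $k$; but it is worth stating explicitly. A cleaner alternative that avoids the gluing is to argue directly by ordinal arithmetic: $\omega\alpha \cong \omega(a_n\omega^n + T) \cong \omega(a_n\omega^n)$ since $T + a_n\omega^n \cong a_n\omega^n$ forces $\alpha + a_n\omega^n \cong a_n\omega^n$ to propagate through the $\omega$-indexed sum, and $\omega(a_n\omega^n)\cong \omega^{n+1}$; I would present whichever of these is shorter once the details are written out, but I expect the arithmetic route — leaning on Lemma~\ref{k alpha is an initial segment}'s computation $(k+1)a_n\omega^n \cong k\alpha + a_n\omega^n$ and passing to the limit — to be the smoother one.
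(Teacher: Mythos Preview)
Your proposal is correct and follows the same overall squeeze strategy as the paper, with both directions ultimately resting on Lemma~\ref{k alpha is an initial segment}. The execution differs in two places. For the lower bound $\omega^{n+1}\leq\omega\alpha$, you argue directly from right monotonicity (Lemma~\ref{monotonicity}(4)) applied to $a_n\omega^n\leq\alpha$, together with $\omega\,a_n\cong\omega$; the paper instead writes $\omega\alpha$ in Cantor normal form and derives $\deg(\omega\alpha)\geq n+1$ by contradiction. Your route is shorter. For the upper bound $\omega\alpha\leq\omega^{n+1}$, the paper builds an explicit embedding $\omega\alpha\hookrightarrow\omega(a_n\omega^n)$ by sending the $j$-th copy of $\alpha$ into the $(2j{-}1)$-th and $2j$-th copies of $a_n\omega^n$, thereby sidestepping any gluing issue. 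Your version goes through $\omega\alpha=\sup_k k\alpha$ and $k\alpha\leq (k+1)a_n\omega^n\leq\omega^{n+1}$; the coherence worry you flag is in fact unnecessary here, since for \emph{ordinals} the inequalities $k\alpha\leq\omega^{n+1}$ pass to the supremum without any need to glue embeddings. So the gap you anticipate does not actually arise, and either presentation works.
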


\begin{proof}
Let $\alpha$ be as above. Then $\omega \alpha$ is also of finite degree, so we can express $\omega \alpha$ in Cantor normal form as $\omega \alpha = b_m \omega^m + \cdots + b_1 \omega + b_0$ for some $m$ and $b_m, \ldots, b_0 \in \omega$. Clearly $m \geq n$ (that is, the degree of $\omega \alpha$ is at least as big as the degree of $\alpha$). We could not have $m=n$, though: for suppose by way of contradiction that we did. Note that $k \alpha$ is isomorphic to an initial segment of $\omega \alpha$ for any $k \in \omega$. Also, since $a_n \omega^n$ embeds into $\alpha$, $k a_n \omega^n$ embeds into $k \alpha$ for any $k$. Then under the assumption that $m=n$, we would have that for any $k \in \omega$,
\[
k a_n \omega^n \preceq k (a_n \omega^n + \cdots + a_0) \cong k \alpha \preceq \omega \alpha \cong b_n \omega^n + b_{n-1} \omega^{n-1} + \cdots + b_1 \omega + b_0.
\]
However, by choosing $k$ large enough, we could make $k a_n > b_n$, so that we $k a_n \omega^n$ could not embed into $b_n \omega^n$; there would be at least one copy of $\omega^n$ left over. Also, note that no copy of $\omega^n$ could embed into the tail $b_{n-1} \omega^{n-1} + \cdots + b_1 \omega + b_0$ of $\omega \alpha$. Thus we have a contradiction, and therefore $m \geq n+1$.

Next we claim that we can embed $\omega \alpha$ into $\omega (a_n \omega^n)$. $\omega \alpha$ is formed by replacing each $j \in \omega$ with a copy of $\alpha$, and $\omega (a_n \omega^n)$ is formed by replacing each $j \in \omega$ with a copy of $a_n \omega^n$. By Lemma \ref{k alpha is an initial segment}, we can embed the first copy of $\alpha$ into the first two copies of $a_n \omega^n$; and we can embed the second copy of $\alpha$ into the third and fourth copies of $a_n \omega^n$; etc.

Thus $\omega \alpha$ embeds into $\omega (a_n \omega^n)$. Noting that $\omega (a_n \omega^n) \cong \omega \omega^n$ (just by re-numbering), we have that $\omega \alpha$ embeds into $\omega \omega^n \cong \omega^{n+1}$. We know from the above that $\omega \alpha$ does not embed into any ordinal of degree less than $n+1$. Therefore, since $\omega^{n+1}$ is the least ordinal of degree $n+1$, we have that  $\omega^{n+1}$ is the Cantor normal form of $\omega \alpha$. Thus $\omega \alpha \cong \omega \omega^n \cong \omega^{n+1}$, as desired. 
\end{proof}

\begin{cor}\label{left mult by omega mod fin on ordinals of finite VD-rank}
Let $\alpha$ be an ordinal of finite degree with Cantor normal form $\alpha = a_n \omega^n + a_{n-1} \omega^{n-1} + \cdots + a_1 \omega + a_0$. Then $\phi^F_l(\omega)(\alpha) =  \omega^n$; that is, $\phi^F_l(\omega)(\alpha) = \omega^{\operatorname{deg}(\alpha)}$. 
\end{cor}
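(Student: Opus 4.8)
The plan is to obtain this as an immediate consequence of Proposition \ref{omega times CNF of finite degree} together with Lemma \ref{omega left 1 reduces degree}, simply by unwinding the definition of $\phi^F_l(\omega)$. Recall that, for an ordinal $\alpha$, we have $\phi^F_l(\omega)(\alpha) = \omega \timesf \alpha = \ot(\faktor{\omega \alpha}{\fin})$ by Definition \ref{d: timesf} and the definition of the left action in Section \ref{s: endomorphisms}.

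First I would apply Proposition \ref{omega times CNF of finite degree} to the ordinal $\alpha = a_n \omega^n + \cdots + a_1 \omega + a_0$ of finite degree $n = \operatorname{deg}(\alpha)$, which gives $\omega \alpha \cong \omega^{n+1}$. Substituting this isomorphism into the expression above yields $\phi^F_l(\omega)(\alpha) = \ot(\faktor{\omega \alpha}{\fin}) = \ot(\faktor{\omega^{n+1}}{\fin})$ (the condensation of isomorphic linear orders is isomorphic, since the finite condensation is defined intrinsically). Then I would invoke Lemma \ref{omega left 1 reduces degree}, which states $\faktor{\omega^{n+1}}{\fin} \cong \omega^n$, to conclude $\phi^F_l(\omega)(\alpha) = \ot(\omega^n) = \omega^n = \omega^{\operatorname{deg}(\alpha)}$, as claimed. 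One small bookkeeping point worth including: since $\omega$ and $\alpha$ are both ordinals, $\omega \timesf \alpha$ is again an ordinal (as noted after Definition \ref{d: timesf}), so the final equality is a genuine equality of ordinals and not merely an isomorphism of order types.

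I do not expect any real obstacle here: all the substantive work has already been done in Proposition \ref{omega times CNF of finite degree} (the computation of $\omega \alpha$ via the embedding lemmas \ref{k alpha is an initial segment} and \ref{finitely many lower powers of omega coming before higher ones get combined}) and in Lemma \ref{omega left 1 reduces degree} (degree reduction under the finite condensation via Lemma \ref{right absorption mod fin in R}). The only thing to be careful about is to state explicitly that passing to the finite condensation respects order-isomorphism, so that $\omega \alpha \cong \omega^{n+1}$ legitimately transfers to $\faktor{\omega\alpha}{\fin} \cong \faktor{\omega^{n+1}}{\fin}$; this is immediate from the intrinsic definition of $\fin$ in \ref{finite condensation really is one}.
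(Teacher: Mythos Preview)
Your proposal is correct and follows essentially the same approach as the paper: apply Proposition~\ref{omega times CNF of finite degree} to get $\omega\alpha \cong \omega^{n+1}$, then Lemma~\ref{omega left 1 reduces degree} to get $\faktor{\omega^{n+1}}{\fin} \cong \omega^n$, and combine these with the definition of $\phi^F_l(\omega)$. The paper's proof is identical in structure, just more terse; your added remarks about isomorphism invariance of $\fin$ and the output being an ordinal are accurate but not strictly needed.
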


\begin{proof}
Suppose $\alpha = a_n \omega^n + a_{n-1} \omega^{n-1} + \cdots + a_1 \omega + a_0$ is an ordinal of degree $n$ in Cantor normal form, for some $n \in \omega$. By Lemma \ref{omega times CNF of finite degree}, $\omega \alpha \cong \omega^{n+1}$. Then by Lemma \ref{omega left 1 reduces degree},
\[
\phi^F_l(\omega)(\alpha) = \omega \timesf \alpha = \ot(\faktor{\omega \alpha}{\fin}) \cong \faktor{\omega^{n+1}}{\fin} \cong \omega^n.
\]
\end{proof}

Thus  the map $\phi^F_l(\omega)$ maps any ordinal $\alpha \in \fd$ to the monic monomial $\omega^{\deg(\alpha)}$; the leading coefficient and the lower terms are lost. That is, \[\phi^F_l(\omega): \fd \to \textrm{MMonomials}(\fd).
		\]
Noting that $\rm{MMonomials}(\fd)$ can be identified with $\omega$, we have that  $\phi^F_l(\omega)$ can be identified with the degree map on $\fd$.

We next describe the effect of $\phi^F_l(1)$ on the Cantor normal form of ordinals of finite degree. (Recall that $\phi^F_r(1)$ has the same effect as $\phi^F_l(1)$, both maps send an ordinal $\alpha$ to $\ot(\faktor{\alpha}{\fin})$.)

It is listed as an exercise in \cite{Ro} that for any linear orders $A$ and $B$, $\faktor{(A + B)}{\fin}$ embeds into $\faktor{A}{\fin} + \faktor{B}{\fin}$. $\faktor{(A + B)}{\fin}$ and $\faktor{A}{\fin} + \faktor{B}{\fin}$ are not always isomorphic, though; for example, take $A=\omega^*$ and $B=\omega$. Then $\faktor{(\omega^* + \omega)}{\fin} \cong \faktor{\Z}{\fin} \cong 1$, but $\faktor{\omega^*}{\fin} + \faktor{\omega}{\fin} \cong 1+1 \cong 2$. In this example, we failed to get an isomorphism because $\omega^*$ had a last element and $\omega$ had a first element, and so the sum of the two linear orders constituted a single equivalence class mod $\fin$. Lemma \ref{when finite condensation doesn't distribute} says that this is the only situation where $\faktor{(A + B)}{\fin}$ and $\faktor{A}{\fin} + \faktor{B}{\fin}$ would not be isomorphic.

\begin{lem}\label{when finite condensation doesn't distribute}
Suppose $A$ and $B$ are linear orders such that $\faktor{(A + B)}{\fin} \not\cong \faktor{A}{\fin} + \faktor{B}{\fin}$. Then $A$ has a last element and $B$ has a first element.
\end{lem}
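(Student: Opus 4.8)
The plan is to prove the contrapositive: assuming that $A$ has no last element or $B$ has no first element, I will show $\faktor{(A+B)}{\fin} \cong \faktor{A}{\fin} + \faktor{B}{\fin}$. Recall from the discussion preceding the lemma that $\faktor{(A+B)}{\fin}$ always embeds into $\faktor{A}{\fin} + \faktor{B}{\fin}$, so the content is that under this hypothesis the embedding is onto.

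The key notion is that of a $\fin$-class of $A+B$ that \emph{straddles the boundary}, meaning one that contains both a point $a$ lying in the $A$-part and a point $b$ lying in the $B$-part. The first step is to show that, under our hypothesis, no straddling class exists. Indeed, if $a \fin b$ in $A+B$ with $a$ in the $A$-part and $b$ in the $B$-part, then since $a < b$ the interval $[a,b]$ is finite; but this interval is precisely the union of the final segment $\{a' \in A : a \le a'\}$ of $A$ and the initial segment $\{b' \in B : b' \le b\}$ of $B$, so both of these sets are finite and nonempty. A finite nonempty final segment of $A$ has a maximum, which is forced to be the last element of $A$; likewise a finite nonempty initial segment of $B$ gives a first element of $B$. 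This contradicts the hypothesis, so the claim holds. I expect this to be the crux of the argument; everything afterward is bookkeeping.

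Next I would use the claim to identify the condensation classes. Since $A$ is an initial segment of $A+B$ and $B$ is a final segment, for any two points both lying in the $A$-part the interval between them computed in $A+B$ coincides with the interval computed in $A$, and symmetrically for $B$; hence $\fin$ restricted to the $A$-part agrees with the finite condensation of $A$, and similarly for $B$. Combined with the claim — which says that the $\fin$-class of an $A$-point stays inside $A$ and the $\fin$-class of a $B$-point stays inside $B$ — this shows that the partition of $A+B$ into $\fin$-classes is exactly the partition of $A$ into its $\fin$-classes followed by the partition of $B$ into its $\fin$-classes.

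Finally I would note that the order on $\faktor{(A+B)}{\fin}$ inherited from $A+B$ places every $A$-class below every $B$-class and restricts to the inherited orders on $\faktor{A}{\fin}$ and on $\faktor{B}{\fin}$. Therefore the identity map on the underlying set of classes is an order-isomorphism $\faktor{(A+B)}{\fin} \cong \faktor{A}{\fin} + \faktor{B}{\fin}$, completing the contrapositive and hence the proof.
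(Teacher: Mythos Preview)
Your proof is correct and follows essentially the same idea as the paper's: both hinge on the observation that a $\fin$-class in $A+B$ containing points from both the $A$-part and the $B$-part forces $A$ to have a last element and $B$ to have a first element. The paper argues directly from the hypothesis $\faktor{(A+B)}{\fin} \not\cong \faktor{A}{\fin} + \faktor{B}{\fin}$ to the existence of such a straddling class and then to the conclusion, whereas you organize the same content as a full contrapositive and spell out the bookkeeping showing that absence of straddling yields the isomorphism; the arguments are otherwise the same.
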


\begin{proof}
Suppose $\faktor{(A + B)}{\fin} \not\cong \faktor{A}{\fin} + \faktor{B}{\fin}$. This means that there is an $x \in A$ such that $\fc^A(x) \subsetneq \fc^{A+B}(x)$. (We could argue similarly if there were a $y \in B$ such that $\fc^B(y) \subsetneq \fc^{A+B}(y)$.) Choose $y \in B$ such that $y \in \fc^{A+B}(x)$. Then $y \fin x$, so there are only finitely many elements of $A+B$ between $x$ and $y$. This could not happen if $A$ had no last element, and it also could not happen if $B$ had no first element. Thus $A$ has a last element and $B$ has a first element.
\end{proof}

\begin{lem}\label{distribution of finite condensation over sums}
Let $\alpha = n_1 \omega^{\alpha_1} + \cdots + n_k \omega^{\alpha_k}$ be an ordinal in Cantor normal form. Then $\faktor{\alpha}{\fin} \cong \sum_{i=1}^k \faktor{n_i \omega^{\alpha_i}}{\fin}$.   
\end{lem}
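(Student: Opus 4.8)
The plan is to prove this by induction on the number $k$ of terms in the Cantor normal form, peeling off one monomial at a time from the right and invoking Lemma \ref{when finite condensation doesn't distribute} at each stage. The base case $k \leq 1$ is immediate: if $\alpha = n_1 \omega^{\alpha_1}$ then both sides equal $\faktor{n_1 \omega^{\alpha_1}}{\fin}$ (and $\alpha = 0$ is trivial).

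For the inductive step, assume $k \geq 2$ and that the claim holds for every ordinal whose Cantor normal form has $k-1$ terms. I would write $\alpha = A + B$, where $A = n_1 \omega^{\alpha_1} + \cdots + n_{k-1}\omega^{\alpha_{k-1}}$ and $B = n_k \omega^{\alpha_k}$. Note that, by uniqueness of Cantor normal form, the displayed expression for $A$ is its Cantor normal form, which has $k-1$ terms. The one genuine observation is that $A$ has no last element: since $\alpha_1 > \cdots > \alpha_{k-1} > \alpha_k \geq 0$, we have $\alpha_{k-1} \geq 1$, so $\omega^{\alpha_{k-1}}$, and hence the finite sum $n_{k-1}\omega^{\alpha_{k-1}}$, is a limit ordinal; and $A$ ends in a copy of $n_{k-1}\omega^{\alpha_{k-1}}$, so it has no maximum. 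By the contrapositive of Lemma \ref{when finite condensation doesn't distribute} — if $A$ has no last element, then $\faktor{(A+B)}{\fin} \cong \faktor{A}{\fin} + \faktor{B}{\fin}$ — we obtain $\faktor{\alpha}{\fin} \cong \faktor{A}{\fin} + \faktor{n_k \omega^{\alpha_k}}{\fin}$.

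Finally I would apply the inductive hypothesis to $A$ to get $\faktor{A}{\fin} \cong \sum_{i=1}^{k-1} \faktor{n_i \omega^{\alpha_i}}{\fin}$, and combine, using associativity of linear-order addition:
\[
\faktor{\alpha}{\fin} \cong \left( \sum_{i=1}^{k-1} \faktor{n_i \omega^{\alpha_i}}{\fin} \right) + \faktor{n_k \omega^{\alpha_k}}{\fin} \cong \sum_{i=1}^{k} \faktor{n_i \omega^{\alpha_i}}{\fin}.
\]
I do not expect a serious obstacle: the only real content is the remark that every proper initial head of a Cantor normal form ends in a limit ordinal and therefore has no last element, which is precisely the hypothesis needed to apply the earlier non-distribution lemma; everything else is routine induction and bookkeeping with associativity of addition.
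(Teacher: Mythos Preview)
Your proof is correct and follows essentially the same approach as the paper: induction on $k$, splitting the Cantor normal form into two pieces and invoking Lemma~\ref{when finite condensation doesn't distribute} via the observation that the left piece has no last element. The only cosmetic difference is that you peel off the rightmost monomial $n_k\omega^{\alpha_k}$ and apply the induction hypothesis to the head, whereas the paper peels off the leftmost monomial $n_1\omega^{\alpha_1}$ and applies the induction hypothesis to the tail; both work for the same reason (the relevant exponent is at least $1$ whenever $k\geq 2$).
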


\begin{proof}
Let $\alpha = n_1 \omega^{\alpha_1} + \cdots + n_k \omega^{\alpha_k}$ be an ordinal in Cantor normal form. We may assume that the natural numbers $n_1, \ldots, n_k$ are non-zero. The proof is by induction on $k$. 

\textit{Base step} ($k=1$): If $\alpha = n_1 \omega^{\alpha_1}$ for some $n_1 \in \omega$ ($n_1 \geq 0$) and some ordinal $\alpha_1$, then $\sum_{i=1}^k \faktor{n_i \omega^{\alpha_i}}{\fin} = \faktor{n_1 \omega^{\alpha_1}}{\fin} = \faktor{\alpha}{\fin}$. 

\textit{Induction step:} Assume the conclusion is true for sums of length $k-1$ for some $k \geq2$, and suppose that $\alpha = \sum_{i=1}^{k} \faktor{n_i \omega^{\alpha_i}}{\fin}$. By the induction hypothesis, 
\[\faktor{\left(\sum_{i=2}^k n_i \omega^{\alpha_i}\right)}{\fin} \cong \sum_{i=2}^k \left(\faktor{n_i \omega^{\alpha_i}}{\fin}\right).
\]
Since $n_1 \omega^{\alpha_1}$ has no last element (as $\alpha_1 \geq 1$), by Lemma \ref{when finite condensation doesn't distribute} and the induction hypothesis, we have 
\begin{align*}
\faktor{[n_1 \omega^{\alpha_1} + (n_2 \omega^{\alpha_2} + \cdots + n_k \omega^{\alpha_k})]}{\fin} & \cong \faktor{n_1 \omega^{\alpha_1}}{\fin} + \faktor{(n_2 \omega^{\alpha_2} + 
\cdots + n_k \omega^{\alpha_k})}{\fin} \\
& \cong \faktor{n_1 \omega^{\alpha_1}}{\fin} + \left( \faktor{n_2 \omega^{\alpha_2}}{\fin} + \cdots + \faktor{n_k \omega^{\alpha_k}}{\fin} \right) \\ &= \sum_{i=1}^k \faktor{n_i \omega^{\alpha_i}}{\fin}.\\
\end{align*}
\end{proof}

\begin{prop}\label{what finite condensation does to finite degree ordinals}
Suppose $\alpha$ is an ordinal of finite degree $n$ having the Cantor normal form $\alpha = a_n \omega^n + a_{n-1} \omega^{n-1} + \cdots + a_1 \omega + a_0$ with $n>0$. Then 
\[
\phi^F_l(1)(\alpha) = \ot(\faktor{\alpha}{\fin})
= a_n \omega^{n-1} + a_{n-1} \omega^{n-2} + \cdots + a_1 + c_\alpha\]
where $c_\alpha=0$ if $a_0=0$, and $c_\alpha=1$ if $a_0 \neq 0$.
\end{prop}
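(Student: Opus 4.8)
The plan is to combine the two lemmas just proved — Lemma~\ref{distribution of finite condensation over sums} and Lemma~\ref{omega left 1 reduces degree} — with a direct computation of $\faktor{n_i\omega^{\alpha_i}}{\fin}$ for each term of the Cantor normal form. First I would apply Lemma~\ref{distribution of finite condensation over sums} to write
\[
\faktor{\alpha}{\fin} \cong \faktor{a_n\omega^n}{\fin} + \faktor{a_{n-1}\omega^{n-1}}{\fin} + \cdots + \faktor{a_1\omega}{\fin} + \faktor{a_0}{\fin}.
\]
So it suffices to identify each summand. For the top $n-1$ terms (those with exponent $\geq 1$), the claim will be that $\faktor{a_i\omega^i}{\fin}\cong a_i\omega^{i-1}$ for $i\geq 1$: indeed $a_i\omega^i \cong \omega^i + \cdots + \omega^i$ ($a_i$ times), and each $\omega^i$ with $i\geq 1$ can be written $\omega^{i-1}\omega$, so by Lemma~\ref{right absorption mod fin in R} (or Lemma~\ref{omega left 1 reduces degree}) $\faktor{\omega^i}{\fin}\cong\omega^{i-1}$ — but I must be a little careful, because $\faktor{\cdot}{\fin}$ does not automatically distribute over the sum $\omega^i+\cdots+\omega^i$. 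However, Lemma~\ref{when finite condensation doesn't distribute} resolves exactly this: $\faktor{(A+B)}{\fin}\cong\faktor{A}{\fin}+\faktor{B}{\fin}$ unless $A$ has a last element and $B$ has a first element, and no power $\omega^i$ with $i\geq1$ has a last element, so the distribution goes through and $\faktor{a_i\omega^i}{\fin}\cong a_i\,\faktor{\omega^i}{\fin}\cong a_i\omega^{i-1}$. In particular the $i=1$ term contributes $a_1\omega^0 = a_1$, a finite block.

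Next I would handle the bottom: the $\omega$-term and the constant term $a_0$ together form the tail $a_1\omega + a_0$, whose finite condensation I need to compute directly rather than termwise, since the two adjacent finite/$\omega$ pieces interact. If $a_0 = 0$ the tail is just $a_1\omega$, already handled, giving $a_1$. If $a_0\neq 0$, then $a_1\omega + a_0$ is a linear order of the form (a block of order type $a_1\omega$) followed by a finite block $a_0$; since $a_1\omega$ has no last element, Lemma~\ref{when finite condensation doesn't distribute} gives $\faktor{(a_1\omega + a_0)}{\fin}\cong \faktor{a_1\omega}{\fin} + \faktor{a_0}{\fin}\cong a_1 + 1$. (If $a_1 = 0$ too, so the whole tail is just the constant $a_0\neq0$, then $\faktor{a_0}{\fin}\cong 1$, consistent with $c_\alpha=1$.) Assembling, $\faktor{\alpha}{\fin}\cong a_n\omega^{n-1} + \cdots + a_2\omega + a_1 + c_\alpha$ with $c_\alpha$ as stated, and since the exponents $n-1 > n-2 > \cdots > 1 > 0$ are strictly decreasing this is already in Cantor normal form; taking order types and recalling $\phi^F_l(1)(\alpha) = \ot(\faktor{1\cdot\alpha}{\fin}) = \ot(\faktor{\alpha}{\fin})$ gives the result.

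The main obstacle — and really the only subtle point — is the repeated appeal to Lemma~\ref{when finite condensation doesn't distribute} to justify that the finite condensation distributes over all the sums in sight. One must check in each case that the left summand has no last element (true for every $\omega^i$, $i\geq1$, and for $a_i\omega$ when $a_i\geq 1$), which is what makes the naive termwise computation correct; the only place distribution genuinely fails is the junction between $a_1\omega$ and a nonzero constant $a_0$, and there it fails in exactly the harmless way that merges $a_0$ into a single extra class, accounted for by $c_\alpha$. A secondary bookkeeping point is confirming the resulting expression is a legitimate Cantor normal form, i.e.\ that collapsing $\omega^i\mapsto\omega^{i-1}$ across all terms preserves the strict descent of exponents and that the leading coefficient $a_n$ is still nonzero — both immediate since $a_n\neq 0$ and the map $i\mapsto i-1$ is strictly monotone on $\{0,1,\dots,n\}$. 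Edge cases ($n=1$, or some interior $a_i = 0$) should be dispatched with a remark that Lemma~\ref{distribution of finite condensation over sums} is stated for the actual CNF with nonzero coefficients, so absent terms simply do not appear.
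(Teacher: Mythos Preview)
Your proposal is correct and follows essentially the same route as the paper: apply Lemma~\ref{distribution of finite condensation over sums} to split $\faktor{\alpha}{\fin}$ into a sum over the CNF terms, reduce each nonconstant term via $\faktor{\omega^i}{\fin}\cong\omega^{i-1}$, and observe that the constant term contributes $c_\alpha$. The only notable difference is in how you compute $\faktor{a_i\omega^i}{\fin}$: you decompose $a_i\omega^i$ as a sum of $a_i$ copies of $\omega^i$ and invoke Lemma~\ref{when finite condensation doesn't distribute} to distribute, whereas the paper uses associativity to write $a_i\omega^i\cong(a_i\omega^{i-1})\omega$ and then applies Lemma~\ref{right absorption mod fin in R} in one stroke---a slightly cleaner maneuver that avoids the extra distributivity check. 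One small correction to your commentary: distribution does \emph{not} fail at the junction $a_1\omega + a_0$ (indeed $a_1\omega$ has no last element, and you compute it correctly via distribution yourself); the extra $+1$ arises simply because $\faktor{a_0}{\fin}\cong 1$ for $a_0\neq 0$, not from any interaction between the two pieces.
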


\begin{proof}
Let $\alpha$ be as above. By Corollary \ref{distribution of finite condensation over sums},
\[
\faktor{\alpha}{\fin} \cong \faktor{a_n \omega^n}{\fin} + \faktor{a_{n-1} \omega^{n-1}}{\fin} + \cdots + \faktor{a_2 \omega^2}{\fin} + \faktor{a_1 \omega}{\fin} + \faktor{a_0}{\fin}
\]
By associativity of linear order multiplication, this sum is isomorphic to 
\[
\faktor{(a_n \omega^{n-1})\omega}{\fin} + \faktor{(a_{n-1} \omega^{n-2}) \omega}{\fin} + \cdots + \faktor{(a_2 \omega) \omega}{\fin} +\faktor{a_1 \omega}{\fin} + \faktor{a_0}{\fin}
\]
-- and this sum, by Lemma \ref{right absorption mod fin in R}, is isomorphic to 
\[
a_n \omega^{n-1} + a_{n-1} \omega^{n-2} + \cdots + a_2 \omega + (a_1 + 1)
\]
if $a_0 \neq 0$, and 
\[
a_n \omega^{n-1} + a_{n-1} \omega^{n-2} + \cdots + a_2 \omega + a_1 
\]
if $a_0 = 0$.
\end{proof}

(The  condition $n>0$ in Proposition \ref{what finite condensation does to finite degree ordinals} is necessary because if $\alpha$ is of degree $0$, say $\alpha=k$ for some $k \in \omega$, then we just have $\phi^F_l(1)(\alpha)=c_\alpha$.)

Thus the map $\phi^F_l(1)$ acts on the Cantor normal form of an ordinal of finite degree by dropping the exponent $i$ on all terms of the form $a_i \omega^i$ to $i-1$, except possibly for a last nonzero constant term, which goes to $1$; and the result is another ordinal in Cantor normal form. If $\alpha$ is an ordinal of finite degree $n \geq 1$, then $\phi^F_l(1)(\alpha)$ has degree $n-1$. In this respect, the map $\phi^F_l(1)$ acts on the ordinals of finite degree similarly to how the familiar derivative operator $\frac{d}{dx}$ acts on a polynomial function $p: \mathbb{R} \to \mathbb{R}$.

The map $\phi^F_l(1)$ has another connection to a different sort of derivative: recall that for $(X, \tau)$ a topological space and $A \subseteq X$, the derived set $A^\mathrm{d}$ of $A$ consists of all of the accumulation points of $A$. ($x$ is an accumulation point of $A$ if $x \in \overline{A \setminus \{x\}}$.) If $X=\alpha$ is an ordinal of finite degree equipped with the order topology, then $X^\mathrm{d}$, with the inherited order from $\alpha$, is order-isomorphic to $\phi^F_l(1)(\alpha) \cong \fc(\alpha)$. 

Motivated by these similarities, in the next section we examine further the properties of the endomorphism $\phi^F_l(1)$ considered as a derivative operator.

\section{Finite condensation derivatives}\label{s: finite condensation derivatives}

Note that the map $\phi^F_l(1)$ is not quite the same as the finite condensation map $\fc$; whereas $\fc$ maps a linear order to a subset of itself, $\phi^F_l(1)$ is a class map on the ordinals $\ON$. Again in this section we focus on ordinals of finite degree. More precisely, we focus on the Cantor normal forms of such ordinals, which we consider as the formal sums $\sum_{i=0}^na_{n-i}\omega^i$ for $n \in \omega$ and $a_n \neq 0$. We will denote by $\fd$ the set of all Cantor normal forms of ordinals of finite degree, considered as formal sums. (We remark that $\fd$ could also be identified with a subset of $^{<\omega}\omega$, the $\omega$-branching tree of height $\omega$; for example, the ordinal $\alpha$ whose Cantor normal form is $a_n \omega^n + a_{n-1} \omega^{n-1} + \cdots + a_1 \omega + a_0$ could be encoded as the finite sequence $\langle n, a_n, a_{n-1}, \ldots, a_1, a_0 \rangle$.) Also, for the remainder of this section, we will use the notation $\bsd$ for the map $\phi^F_l(1)$; so, in this notation, 
\[
\bsd(\alpha) = \ot(1 \timesf \alpha) = \ot(\faktor{\alpha}{\fin})
\]
for $\alpha$ an ordinal. 

We can now restate Proposition \ref{what finite condensation does to finite degree ordinals} as follows: 

\begin{prop}\label{bsd is almost a linear operator on individual alpha}
For all $\alpha = a_n \omega^n + \cdots + a_1 \omega + a_0 \in \fd$ with $n>0$, 
\[
\bsd\left(\sum_{i=0}^n a_{n-i} \omega^{n-i}\right) \cong \left[\sum_{i=0}^{n-1}a_{n-i} \bsd(\omega^{n-i})\right]+c_\alpha
\cong \left[\sum_{i=0}^{n-1}a_{n-i} \omega^{n-i-1}\right]+c_\alpha \]
where $c_\alpha=0$ if $a_0=0$, and $c_0=1$ if $a_0 \neq 0$. \hfill \qed
\end{prop}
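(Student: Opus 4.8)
The statement in Proposition \ref{bsd is almost a linear operator on individual alpha} is essentially a notational repackaging of Proposition \ref{what finite condensation does to finite degree ordinals}, so the plan is to reduce it directly to that earlier result together with the computation $\bsd(\omega^{n-i-1})$ performed via Lemma \ref{omega left 1 reduces degree}. The key observation is that all three expressions in the displayed isomorphism are just the ordinal $a_n \omega^{n-1} + a_{n-1}\omega^{n-2} + \cdots + a_1 + c_\alpha$ written in three different forms, so the proof amounts to unwinding definitions and checking that the formal sums denote the claimed ordinal.

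First I would recall that by definition $\bsd = \phi^F_l(1)$, so $\bsd(\alpha) = \ot(\faktor{\alpha}{\fin})$ for any ordinal $\alpha$. Then I would invoke Proposition \ref{what finite condensation does to finite degree ordinals} verbatim: for $\alpha = a_n\omega^n + \cdots + a_1\omega + a_0$ with $n > 0$, we have $\bsd(\alpha) = a_n\omega^{n-1} + a_{n-1}\omega^{n-2} + \cdots + a_1 + c_\alpha$ where $c_\alpha \in \{0,1\}$ as stated. This is precisely the rightmost expression $\left[\sum_{i=0}^{n-1} a_{n-i}\omega^{n-i-1}\right] + c_\alpha$ once one checks the index bookkeeping: as $i$ runs from $0$ to $n-1$, the term $a_{n-i}\omega^{n-i-1}$ runs through $a_n\omega^{n-1}, a_{n-1}\omega^{n-2}, \ldots, a_1\omega^0$, matching the list above.

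For the middle expression, I would note that $\bsd(\omega^{n-i}) = \ot(\faktor{\omega^{n-i}}{\fin}) \cong \omega^{n-i-1}$ by Lemma \ref{omega left 1 reduces degree}, valid whenever $n - i \geq 1$, i.e. for $i$ from $0$ to $n-1$; hence $a_{n-i}\bsd(\omega^{n-i}) \cong a_{n-i}\omega^{n-i-1}$ and summing over $i = 0,\ldots,n-1$ gives the equality of the middle and rightmost expressions. (One should be slightly careful that ``$a_{n-i}\bsd(\omega^{n-i})$'' is read as $a_{n-i}$ copies of the ordinal $\bsd(\omega^{n-i})$, which is the natural reading and matches the Cantor-normal-form conventions of the paper.) There is essentially no obstacle here — the only thing to watch is the off-by-one indexing in the formal sums and the harmless notational distinction between the formal sum $\sum_{i=0}^n a_{n-i}\omega^{n-i}$ and the ordinal it denotes. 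I would close by remarking that since each displayed quantity is manifestly a Cantor normal form (its exponents $n-1 > n-2 > \cdots > 0$ are strictly decreasing, its leading coefficient $a_n$ is nonzero, and the bottom term is either $a_1 + 1$ or $a_1$, a natural number), the result is again a well-defined element of $\fd$, as asserted in the surrounding text.
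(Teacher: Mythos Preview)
Your proposal is correct and matches the paper's own treatment: the paper presents Proposition~\ref{bsd is almost a linear operator on individual alpha} explicitly as a restatement of Proposition~\ref{what finite condensation does to finite degree ordinals} with no separate proof, and your additional verification of the middle expression via Lemma~\ref{omega left 1 reduces degree} simply makes the implicit identification explicit.
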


We see from Proposition \ref{bsd is almost a linear operator on individual alpha} that $\bsd$ acts almost like a polynomial derivative when applied to a sum that is the Cantor normal form of a single ordinal $\alpha \in \fd$. 

Denote by $\bsi(\alpha)$ the inverse image of $\alpha$ under $\bsd$: 
\[
\bsi(\alpha) := \{\beta \in \fd: \bsd(\beta) = \alpha\}.
\]

Suppose $\alpha \neq 0$ is of degree $0$; so $\alpha = k$ for some $k \in \omega, k \neq 0$. Then by Proposition \ref{bsd is almost a linear operator on individual alpha}, $\bsd(k \omega)=k$, so $k \omega \in \bsi(\alpha)$. If $k=1$, then $\bsd(j)=1=\alpha$ for all non-zero $j \in \omega$. If $k>1$, then $(k-1) \omega + j \in \bsi(\alpha)$ for every $j \in \omega$ with $j >0$, since $\bsd((k-1) \omega + j) = k-1+1=k=\alpha$. 

More generally, we can use Proposition \ref{bsd is almost a linear operator on individual alpha} to describe $\bsi(\alpha)$ for any $\alpha \in \fd$. Suppose $\alpha=a_n \omega^n + \cdots + a_1 \omega + a_0$ has degree at least $1$. Then $\beta := a_n \omega^{n+1} + a_{n-1}\omega^n + \cdots + a_1 \omega^2 + a_0 \omega \in \bsi(\alpha)$. If $a_0 = 1$, then $\bsi(\alpha)$ also includes any $\beta$ of the form $a_n \omega^{n+1} + a_{n-1}\omega^n + \cdots + a_1 \omega^2 + j$ for $j \in \omega, \, j >0$. If  $a_0 > 1$, then $\bsi(\alpha)$ also includes any $\beta$ of the form $a_n \omega^{n+1} + a_{n-1}\omega^n + \cdots + a_1 \omega^2 + (a_0-1)\omega + j$ for $j \in \omega, \, j >0$. Thus we have the following description of $\bsi(\alpha)$.

\begin{prop}\label{BS integrals}
Suppose $\alpha = a_n \omega^n + \cdots + a_0$ is a nonzero ordinal of finite degree. 
\begin{enumerate}
\item If $a_0=0$, then $\bsi(\alpha)$ consists of the single ordinal $a_n \omega^{n+1} + \cdots + a_1 \omega^2$.
\item If $a_0=1$, then $\bsi(\alpha)$ consists of $a_n \omega^{n+1} + \cdots + a_1 \omega^2 + \omega$ along with all ordinals of the form $a_n \omega^{n+1} + \cdots + a_1 \omega^2 + j$ for $j \in \omega, \, j > 0$.
\item If $a_0 > 1$, then $\bsi(\alpha)$ consists of $a_n \omega^{n+1} + \cdots + a_1 \omega^2 + a_0\omega$ along with all ordinals of the form $a_n \omega^{n+1} + \cdots + a_1 \omega^2 + (a_0-1)\omega + j$ for $j \in \omega, \, j > 0$.
\end{enumerate}
\hfill \qed
\end{prop}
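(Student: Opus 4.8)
The plan is to invert Proposition~\ref{bsd is almost a linear operator on individual alpha} coefficient by coefficient, using the uniqueness clause of the Cantor Normal Form Theorem (Theorem~\ref{Cantor normal form}). First I would take an arbitrary $\beta \in \fd$ with $\bsd(\beta) = \alpha$ and write its Cantor normal form $\beta = b_m \omega^m + \cdots + b_1 \omega + b_0$ with $b_m \neq 0$. Applying Proposition~\ref{bsd is almost a linear operator on individual alpha}: if $m = 0$ then $\bsd(\beta) = c_\beta \in \{0,1\}$, while if $m \geq 1$ then $\bsd(\beta) = b_m \omega^{m-1} + \cdots + b_2 \omega + (b_1 + c_\beta)$, where $c_\beta = 0$ when $b_0 = 0$ and $c_\beta = 1$ when $b_0 \neq 0$. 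In either case this expression is already in Cantor normal form, so its nonzero terms are uniquely determined by $\alpha$.

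Next I would pin down $m$. Since $\alpha \neq 0$, set $n = \deg(\alpha)$. If $n \geq 1$, then $\bsd(\beta)$ has degree $n \geq 1$; this rules out $m \leq 1$ (those give $\bsd(\beta)$ finite) and forces $m - 1 = n$, i.e. $m = n+1 \geq 2$. Equating $b_{n+1}\omega^{n} + b_n \omega^{n-1} + \cdots + b_2 \omega + (b_1 + c_\beta)$ with $a_n \omega^n + \cdots + a_1 \omega + a_0$ and invoking uniqueness of Cantor normal form yields $b_{i} = a_{i-1}$ for $2 \le i \le n+1$, together with the single equation $b_1 + c_\beta = a_0$ in which $c_\beta \in \{0,1\}$ and $c_\beta = 0$ iff $b_0 = 0$. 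It then remains only to solve $b_1 + c_\beta = a_0$ in nonnegative integers: when $a_0 = 0$ the unique solution is $b_1 = 0$, $c_\beta = 0$, $b_0 = 0$, giving the single ordinal of item (1); when $a_0 = 1$ the solutions are $(b_1, b_0) = (1, 0)$ and $(b_1, b_0) = (0, j)$ for $j > 0$, giving exactly the list in item (2); and when $a_0 > 1$ the solutions are $(b_1, b_0) = (a_0, 0)$ and $(b_1, b_0) = (a_0 - 1, j)$ for $j > 0$, giving item (3). For the converse inclusion, each $\beta$ listed in (1)--(3) satisfies $\bsd(\beta) = \alpha$ by one more direct application of Proposition~\ref{bsd is almost a linear operator on individual alpha}.

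Finally I would dispose of the degenerate case $n = 0$, where $\alpha = k$ for some $k \in \omega$ with $k \geq 1$, so that only items (2) and (3) with empty leading block $a_n\omega^{n+1}+\cdots+a_1\omega^2$ apply. Here $m = 0$ gives $\bsd(\beta) = c_\beta \le 1$, contributing all positive finite $\beta$ precisely when $k = 1$; $m = 1$ gives $b_1 + c_\beta = k$, hence $\beta = k\omega$ or $\beta = (k-1)\omega + j$ with $j > 0$ (the latter being a legitimate degree-$1$ Cantor normal form exactly when $k \geq 2$); and $m \geq 2$ is impossible, since then $\bsd(\beta)$ has degree $m - 1 \geq 1$. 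This reproduces the statements of (2) and (3) when $n = 0$, matching the discussion preceding the proposition.

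The only real subtlety — and the point the proof must handle carefully — is that $\bsd$ is \emph{not} injective, the failure of injectivity being caused entirely by the one-bit ambiguity $c_\beta \in \{0,1\}$ in the constant term of $\bsd(\beta)$. Thus the argument cannot simply exhibit one preimage of $\alpha$; it must enumerate all of them, which is precisely what solving $b_1 + c_\beta = a_0$ above accomplishes. Keeping track of the low-degree edge cases (where $\beta$ itself has degree $0$ or $1$) is the other spot requiring attention, but it introduces no new idea.
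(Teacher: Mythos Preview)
Your proposal is correct and follows the same line as the paper: both invert Proposition~\ref{bsd is almost a linear operator on individual alpha} using the uniqueness of Cantor normal form. The paper in fact gives no formal proof---the proposition is stated with a \qed\ symbol after an informal discussion that exhibits the claimed preimages and checks (via Proposition~\ref{bsd is almost a linear operator on individual alpha}) that they map to $\alpha$, but does not explicitly argue that there are no others. Your write-up is more thorough on exactly this point: by starting from an arbitrary $\beta$ with $\bsd(\beta)=\alpha$, reading off its degree from $\deg(\alpha)$, and then matching coefficients, you actually prove the reverse inclusion rather than leaving it implicit. The observation that all the non-injectivity of $\bsd$ is concentrated in the single equation $b_1+c_\beta=a_0$ is the right way to organize the case split, and your handling of the low-degree edge cases ($m\le 1$, $n=0$) is clean.
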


% this paragraph is true, but it's neater to 
% work with monomials with a_0=0
\begin{comment}
The situation can be simplified somewhat if we restrict our attention to $\mon$, the monomials of finite degree. If $\alpha = a_n \omega^n$ is a nonzero element of  $\mon$ and $n > 0$, then by Proposition \ref{BS integrals}, there is exactly one monomial $\beta \in \mon$ such that $\bsd(\beta)=\alpha$, namely $\beta = a_n \omega^{n+1}$. Also by Proposition \ref{BS integrals}, if $\alpha=a_0$ is of degree $0$ for some $a_0 \neq 1$, then there is exactly one monomial $\beta$ such that $\bsd(\beta)=\alpha$, namely $\beta = a_0 \omega$. If $\alpha=1$, then there are infinitely many monomials $\beta$ with $\bsd(\beta)=\alpha$; $\beta$ could either be $\omega$ or any nonzero $j \in \omega$. In this case we will designate $\omega$ as the representative of $\bsi(\alpha)$.
\end{comment}

Proposition \ref{BS integrals} implies that if $\alpha$ has no nonzero constant term (that is, $\alpha$ is not a successor ordinal), then $|\bsi(\alpha)|=1$. Thus we obtain the following corollary.

\begin{cor}\label{bsi as an inverse}
When restricted to the set of limit ordinals of finite degree, $\bsi$ defines a function; moreover, $\bsi$ is the inverse of $\bsd$. \hfill \qed
\end{cor}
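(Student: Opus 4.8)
The plan is to deduce Corollary \ref{bsi as an inverse} directly from Proposition \ref{BS integrals} and Proposition \ref{bsd is almost a linear operator on individual alpha}. First I would observe that an ordinal of finite degree $\alpha = a_n\omega^n + \cdots + a_1\omega + a_0$ is a limit ordinal precisely when its constant term $a_0$ vanishes; this is immediate from the fact that $a_n\omega^n + \cdots + a_1\omega$ is a limit ordinal (it is a sum of limit ordinals, hence has no immediate predecessor) while adding a nonzero $a_0$ produces a successor. So the hypothesis "$\alpha$ is a limit ordinal of finite degree" is exactly the hypothesis $a_0 = 0$ in case (1) of Proposition \ref{BS integrals}, together with the trivial case $\alpha = 0$ (where $\bsd^{-1}(0) = \{0\}$, since $\bsd$ applied to any nonzero ordinal of finite degree has at least one equivalence class).

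Next, for any limit ordinal $\alpha$ of finite degree, Proposition \ref{BS integrals}(1) tells us that $\bsi(\alpha)$ is a singleton, namely $\{a_n\omega^{n+1} + \cdots + a_1\omega^2\}$ (when $\alpha \neq 0$). Hence the assignment $\alpha \mapsto$ (the unique element of $\bsi(\alpha)$) is a genuine function on the set of limit ordinals of finite degree; I would name this function $\bsi$ by a mild abuse of notation (identifying the singleton with its element), matching the statement of the corollary.

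To see that this $\bsi$ is the inverse of $\bsd$, I would check both composition directions. For $\bsd \circ \bsi = \mathrm{id}$: given a limit ordinal $\alpha = a_n\omega^n + \cdots + a_1\omega$ of finite degree, $\bsi(\alpha) = a_n\omega^{n+1} + \cdots + a_1\omega^2$, which has constant term $0$, and applying Proposition \ref{bsd is almost a linear operator on individual alpha} with $c = 0$ gives $\bsd(\bsi(\alpha)) = a_n\omega^n + \cdots + a_1\omega = \alpha$; the case $\alpha = 0$ is trivial. For $\bsi \circ \bsd = \mathrm{id}$ on the set of limit ordinals of finite degree: given a limit ordinal $\beta = b_m\omega^m + \cdots + b_1\omega$ of finite degree (again constant term $0$), Proposition \ref{bsd is almost a linear operator on individual alpha} gives $\bsd(\beta) = b_m\omega^{m-1} + \cdots + b_1$; here one must note $m \geq 1$ is forced because $\beta$ is a nonzero limit ordinal, so $\bsd(\beta)$ really is an ordinal of finite degree, though not necessarily a limit ordinal. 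This is the one subtlety: $\bsi$ is only asserted to be a function on limit ordinals, so for $\bsi \circ \bsd = \mathrm{id}$ to typecheck we need $\bsd(\beta)$ to lie in the domain of $\bsi$ — but $\bsd(\beta) = b_m\omega^{m-1} + \cdots + b_1$ has constant term $b_1$, which need not be zero.

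The main obstacle, then, is this domain-matching issue, and I expect the intended reading of the corollary is the weaker (and correct) pair of statements: $\bsd$ restricted to limit ordinals of finite degree is injective (because each such ordinal has a singleton preimage, by Proposition \ref{BS integrals}(1)), and $\bsi$ is a right inverse in the sense that $\bsd(\bsi(\alpha)) = \alpha$ for every limit $\alpha$ of finite degree, with $\bsi$ itself always outputting a limit ordinal (its output $a_n\omega^{n+1} + \cdots + a_1\omega^2$ has constant term $0$) so that $\bsi \circ \bsd = \mathrm{id}$ does hold on the restricted domain after all — the point being that if $\beta$ is a limit ordinal of finite degree then $\bsi(\bsd(\beta)) = \beta$ follows because $\bsd$ is injective on that domain and $\bsi(\bsd(\beta))$ is by definition the unique preimage of $\bsd(\beta)$, which includes $\beta$. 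I would write the argument in that order: (i) characterize limit ordinals of finite degree by $a_0 = 0$; (ii) invoke Proposition \ref{BS integrals}(1) to get $|\bsi(\alpha)| = 1$, hence $\bsi$ is a function on this domain; (iii) use Proposition \ref{bsd is almost a linear operator on individual alpha} to verify $\bsd \circ \bsi = \mathrm{id}$ and conclude $\bsi \circ \bsd = \mathrm{id}$ from injectivity.
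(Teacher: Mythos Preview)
Your core approach --- characterize limit ordinals of finite degree by $a_0=0$ and invoke Proposition~\ref{BS integrals}(1) --- is exactly the paper's; indeed, the paper's entire argument is the sentence preceding the corollary plus a \qed. Your verification that $\bsd \circ \bsi = \mathrm{id}$ via Proposition~\ref{bsd is almost a linear operator on individual alpha} is correct and more thorough than anything the paper writes.

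You are also right to flag the domain-matching subtlety for $\bsi \circ \bsd$: if $\beta$ is a limit ordinal with $b_1 \neq 0$ (e.g.\ $\beta = \omega^2 + \omega$), then $\bsd(\beta)$ is a successor, so $\bsi$-as-a-function is not defined at $\bsd(\beta)$. However, your proposed fix via injectivity does not close this gap: injectivity of $\bsd$ on limit ordinals only tells you that $\beta$ is the unique \emph{limit} ordinal in the preimage \emph{set} $\bsi(\bsd(\beta))$; it does not make that set a singleton (Proposition~\ref{BS integrals}(2) or (3) applies when $\bsd(\beta)$ is a successor), so it does not yield $\bsi(\bsd(\beta)) = \beta$ in the function sense. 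The paper simply glosses over this. The cleanest precise reading of the corollary is that $\bsi$ is a right inverse ($\bsd \circ \bsi = \mathrm{id}$ on limit ordinals of finite degree), and that $\bsd$ and $\bsi$ restrict to mutually inverse bijections between the limit ordinals of finite degree and the image of $\bsi$, namely those ordinals whose Cantor normal form has $a_0 = a_1 = 0$.
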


\begin{comment}
Since every ordinal has a unique Cantor normal form, each ordinal $\alpha = a_n \omega^n + \cdots + a_1 \omega + a_0$ of finite degree can be identified uniquely with a finite sequence $\langle n, a_n, \ldots, a_0 \rangle$ of natural numbers, and this sequence can be viewed as an element on level $n+2$ of the tree $\omega^{< \omega}$. The image of $\alpha$ under the map $\bsd$ would then be identified with the sequence $\langle n-1, a_n, a_{n-1}, \ldots, a_2, (a_1 +c_\alpha) \rangle$, which is on level $n+1$ of the tree $\omega^{< \omega}$. 
\end{comment}

We emphasize that for any $\alpha$ of finite degree, we can consider the Cantor normal form of $\alpha$ as a formal sum $\sum_{i=0}^{n}a_{n-i}\omega^{n-i}$. We define a map $\Phi$ by letting $\Phi(\alpha)$ be the Cantor normal form of $\alpha$. In particular, $\Phi$ maps the set of ordinals of finite degree to $\fd$; that is, as a class map, $\Phi: \{\alpha \in \ON: \operatorname{deg}(\alpha)< \omega\} \to \fd$ is the map sending $\alpha$ to its Cantor normal form. Conversely, given a (formal) sum $\sum_{i=0}^{n}a_{n-i}\omega^{n-i}$, we can define the inverse $\Phi^{-1}: \fd \to \ON$ as the unique ordinal whose Cantor normal form is $\sum_{i=0}^{n}a_{n-i}\omega^{n-i}$. The map $\bsd$ is a map from $\fd$ to $\fd$, as it (and its dual map $\bsi$, defined above), when applied to an ordinal in Cantor normal form, has as its output an ordinal in Cantor normal form.

Thus we have the following:
\[
\begin{tikzcd}
\fd \arrow[r, "\bsd"]
& \fd \arrow[d, "\Phi^{-1}"] \\
\{\alpha \in \ON: \operatorname{deg}(\alpha)<\omega\} \arrow[u, "\Phi"] \arrow[r, dashed, red, "\bsd^\dagger" blue]
&  \{\alpha \in \ON: \operatorname{deg}(\alpha)<\omega\}).
\end{tikzcd}
\]
Here, the derivative $\bsd^\dagger$  mapping $\{\alpha \in \ON: \operatorname{deg}(\alpha)<\omega\}$ to $\{\alpha \in \ON: \operatorname{deg}(\alpha)<\omega\}$ is an induced derivative, arising from our definition of the operator $\bsd$ defined on $\fd$. Next, consider iterating $\bsd^\dagger$. We have
\begin{align*}
\bsd^\dagger \circ \bsd^\dagger & = \bsd^\dagger \circ (\Phi^{-1} \circ \bsd \circ \Phi) \\
& = (\Phi^{-1} \circ \bsd \circ \Phi) \circ (\Phi^{-1} \circ \bsd \circ \Phi) \\
& = \Phi^{-1} \circ \bsd \circ \rm{Id} \circ \bsd \circ \Phi \\
& = \Phi^{-1} \circ \bsd^2 \circ \Phi.
\end{align*}
Similarly, for $n<\omega$, we will have $(\bsd^\dagger)^n = \Phi^{-1} \circ \bsd^n \circ \Phi$.
More generally, suppose we have defined an endomorphism $f \in \operatorname{End}(\fd)$. We can then define $\operatorname{Ad}_\Phi(f) \in \operatorname{End}(\{\alpha \in \ON: \operatorname{deg}(\alpha)<\omega\})$ by $\operatorname{Ad}_\Phi = \Phi^{-1} \circ f \circ \Phi$. 

We conclude by examining the extent to which the finite condensation derivative $\bsd$ acts like a linear map. In general, for $\alpha, \beta \in \fd$ and $p, q \in \omega$, we do not have $\bsd(\Phi(p\alpha + q\beta)) \cong p\bsd(\alpha) + q\bsd(\beta)$; however, we show that these differ only by a constant, and they are isomorphic in the case when $\alpha$ and $\beta$ are limit ordinals.

\begin{remark}
We have only defined $\bsd$ on Cantor normal forms, and the expression $\alpha + \beta = a_n \omega^n + \cdots + a_0 + b^m + \cdots + b_0$ need not be in Cantor normal form. For ease of notation, we will write ``$\bsd(p\alpha)$'' and ``$\bsd(\alpha + \beta)$'' for $\bsd(\Phi(p\alpha))$ and $\bsd(\Phi(\alpha + \beta))$ respectively; it will be understood that the argument of $\bsd$ in such expressions is put into Cantor normal form before the finite condensation derivative is taken. For similar reasons, we will write ``$\bsd(\alpha) + \bsd(\beta)$'' to mean $\Phi(\bsd(\alpha) + \bsd(\beta)).$
\end{remark}

\begin{comment}
By Lemma \ref{when finite condensation doesn't distribute}, we only have isomorphism here when $\alpha$ has no last element, that is, when $\alpha$ is a limit ordinal. When we restrict $\bsd$ to acting only on limit ordinals of finite degree, though, we do have that it acts linearly.
\end{comment}

\begin{lem}\label{scalar times alpha formula}
Let $\alpha=a_n \omega^n + \cdots + a_0$ be an ordinal of finite degree, and let $p \in \omega$ with $p \geq 1$. Then $p \alpha \cong (p-1)a_n \omega^n + \alpha \cong p a_n \omega^n + a_{n-1} \omega^{n-1} + \cdots + a_0$.
\end{lem}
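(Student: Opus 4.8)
The plan is to reduce everything to a single absorption fact: if $T := a_{n-1}\omega^{n-1} + \cdots + a_1\omega + a_0$ is the principal tail of $\alpha$, then $T + a_n\omega^n \cong a_n\omega^n$. This is precisely what was obtained in the course of proving Lemma~\ref{k alpha is an initial segment}, by applying Lemma~\ref{finitely many lower powers of omega coming before higher ones get combined} $n$ times (here I take $n = \operatorname{deg}(\alpha) \geq 1$; the degenerate case $\operatorname{deg}(\alpha) = 0$, i.e. $\alpha = a_0 \in \omega$, is immediate, since then $p\alpha = p a_0 = (p-1)a_0 + a_0$).

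First I would unwind the lexicographic product as $p\alpha \cong \underbrace{\alpha + \cdots + \alpha}_{p}$ and substitute $\alpha \cong a_n\omega^n + T$ into each summand. Next, using associativity of linear-order addition, I would regroup the length-$p$ sum $(a_n\omega^n + T) + \cdots + (a_n\omega^n + T)$ as $a_n\omega^n + (T + a_n\omega^n) + \cdots + (T + a_n\omega^n) + T$ with $p-1$ interior parenthesized blocks. Applying the absorption fact to each block collapses it to $a_n\omega^n$, so the whole expression becomes $a_n\omega^n + \underbrace{a_n\omega^n + \cdots + a_n\omega^n}_{p-1} + T \cong p a_n\omega^n + T = p a_n\omega^n + a_{n-1}\omega^{n-1} + \cdots + a_0$. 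Finally, peeling off one copy of $a_n\omega^n$ and recombining the remaining $a_n\omega^n + T$ into $\alpha$ gives $p\alpha \cong (p-1)a_n\omega^n + \alpha$, which establishes both displayed isomorphisms.

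There is essentially no real obstacle here; the only points worth a word of care are: (i) the base case $p = 1$, where the asserted formula reads $1 \cdot \alpha \cong 0 \cdot a_n\omega^n + \alpha \cong \alpha$ and the regrouping above simply has no interior blocks; and (ii) confirming that the final expression $p a_n\omega^n + a_{n-1}\omega^{n-1} + \cdots + a_0$ is genuinely in Cantor normal form, which holds since $p a_n \neq 0$ and the exponents $n > n-1 > \cdots > 0$ are strictly decreasing. One could instead run an induction on $p$ via $p\alpha \cong \alpha + (p-1)\alpha$ together with the inductive formula for $(p-1)\alpha$, but the direct regrouping above is cleaner and makes the role of Lemma~\ref{finitely many lower powers of omega coming before higher ones get combined} transparent.
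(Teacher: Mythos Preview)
Your proof is correct and follows essentially the same approach as the paper: both introduce the principal tail $T$, use the absorption $T + a_n\omega^n \cong a_n\omega^n$ (via Lemma~\ref{finitely many lower powers of omega coming before higher ones get combined}), and regroup the $p$-fold sum $(a_n\omega^n + T) + \cdots + (a_n\omega^n + T)$ into $a_n\omega^n + (T + a_n\omega^n) + \cdots + (T + a_n\omega^n) + T$ with $p-1$ interior blocks. The only cosmetic difference is that the paper reads off $(p-1)a_n\omega^n + \alpha$ first and then $pa_n\omega^n + T$, whereas you do it in the opposite order.
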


\begin{proof}
If $p=1$, then for any $\alpha$, $p \alpha = \alpha = 0 + \alpha = (p-1)a_n \omega^n + \alpha$. 

Suppose $p \geq 2$ and $n=0$. If $\alpha=0$, then $p \alpha \cong (p-1)a_n \omega^n + \alpha = 0$. If $\alpha = a_0 \neq 0$, then $p \alpha = pa_0 = (p-1)a_0 + a_0 = (p-a)a_0 + \alpha$.

Now suppose $p \geq 2$ and $n \geq 1$. Denote by $T_\alpha$ the principal tail of $\alpha$; that is, $T_\alpha = a_{n-1} \omega^{n-1} + \cdots + a_0$. Note that each term in $T_\alpha$ has degree less than $n$, so that $T_\alpha + a_n \omega^n \cong a_n \omega^n$ by Lemma \ref{finitely many lower powers of omega coming before higher ones get combined}. Then
\begin{align*}
p \alpha & \cong p (a_n \omega^n + T_\alpha) \cong \underbrace{(a_n \omega^n + T_\alpha) + \cdots + (a_n \omega^n + T_\alpha)}_{p \textrm{ times}} \\
& \cong a_n \omega^n + \underbrace{(T_\alpha + a_n \omega^n) + \cdots + (T_\alpha + a_n \omega^n)}_{p-1 \textrm{ times}} +T_\alpha\\
& \cong a_n \omega^n + \underbrace{(a_n \omega^n + \cdots + a_n \omega^n)}_{p-1 \textrm{ times}} + T_\alpha \\
& \cong \underbrace{(a_n \omega^n + \cdots + a_n \omega^n)}_{p-1 \textrm{ times}} + (a_n \omega^n + T_\alpha)\\
& \cong (p-1)a_n \omega^n + \alpha, 
\end{align*}
and  $(p-1)a_n \omega^n + \alpha \cong p a_n \omega^n + T_\alpha \cong p a_n \omega^n + a_{n-1} \omega^{n-1} + \cdots + a_0$.
\end{proof}

From Lemma \ref{scalar times alpha formula}, we get the following corollary.

\begin{cor}\label{scalar times derivative of alpha}
Let $\alpha = a_n \omega^n + \cdots + a_0$ be an ordinal of finite degree $n \geq 1$, and let $p \in \omega$ with $p>0$.
\begin{enumerate}
    \item If $\alpha$ has degree $n \geq 2$, then $\bsd(p \alpha) \cong p \bsd(\alpha) \cong (p-1)a_n \omega^{n-1} + \bsd(\alpha)$.
    \item If $\alpha$ has degree $1$, then $\bsd(p \alpha) \cong (p-1)a_n \omega^{n-1} + \bsd(\alpha)$, and  $p \bsd(\alpha)$ differs from $(p-1)a_n \omega^{n-1} + \bsd(\alpha)$ by at most a constant.
\end{enumerate}
\end{cor}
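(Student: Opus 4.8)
The plan is to derive both parts from Lemma~\ref{scalar times alpha formula} together with Proposition~\ref{what finite condensation does to finite degree ordinals} (restated as Proposition~\ref{bsd is almost a linear operator on individual alpha}), simply by passing back and forth between an ordinal of finite degree and its Cantor normal form. The one preliminary point I would isolate is that $c_{p\alpha}=c_\alpha$: by Lemma~\ref{scalar times alpha formula} the Cantor normal form of $p\alpha$ is $p a_n\omega^n + a_{n-1}\omega^{n-1}+\cdots+a_1\omega+a_0$, so its degree-$0$ coefficient is still $a_0$, whence the correction term $c_{p\alpha}$ appearing in Proposition~\ref{what finite condensation does to finite degree ordinals} equals $c_\alpha$. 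This is what makes scaling by $p$ interact cleanly with $\bsd$.

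For part (1), I would assume $\deg(\alpha)=n\ge 2$ and first feed the normal form of $p\alpha$ into Proposition~\ref{what finite condensation does to finite degree ordinals}, obtaining
\[
\bsd(p\alpha)\cong p a_n\omega^{n-1}+a_{n-1}\omega^{n-2}+\cdots+a_2\omega+a_1+c_\alpha .
\]
Separately, Proposition~\ref{what finite condensation does to finite degree ordinals} gives $\bsd(\alpha)\cong a_n\omega^{n-1}+a_{n-1}\omega^{n-2}+\cdots+a_2\omega+a_1+c_\alpha$, which is an ordinal of degree $n-1\ge 1$ with leading coefficient $a_n$; applying Lemma~\ref{scalar times alpha formula} to \emph{it} with the scalar $p$ yields $p\bsd(\alpha)\cong (p-1)a_n\omega^{n-1}+\bsd(\alpha)\cong p a_n\omega^{n-1}+a_{n-1}\omega^{n-2}+\cdots+a_2\omega+a_1+c_\alpha$. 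Comparing the two resulting Cantor normal forms gives $\bsd(p\alpha)\cong p\bsd(\alpha)\cong (p-1)a_n\omega^{n-1}+\bsd(\alpha)$, which is (1). (When $n=2$ the block $a_{n-1}\omega^{n-2}+\cdots+a_2\omega$ is empty; the same computation goes through verbatim.)

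For part (2), I would assume $\deg(\alpha)=1$, so $\alpha=a_1\omega+a_0$ and, by Lemma~\ref{scalar times alpha formula}, $p\alpha\cong p a_1\omega+a_0$. Proposition~\ref{what finite condensation does to finite degree ordinals} then gives $\bsd(p\alpha)\cong p a_1+c_\alpha$ and $\bsd(\alpha)\cong a_1+c_\alpha$, so that (recalling $n=1$, so $\omega^{n-1}=1$ and $a_n=a_1$) $(p-1)a_n\omega^{n-1}+\bsd(\alpha)=(p-1)a_1+(a_1+c_\alpha)=p a_1+c_\alpha\cong\bsd(p\alpha)$, the first assertion. Finally $p\bsd(\alpha)=p(a_1+c_\alpha)=p a_1+p c_\alpha=\bigl[(p-1)a_1+\bsd(\alpha)\bigr]+(p-1)c_\alpha$, and since $(p-1)c_\alpha\le p-1$ is finite, $p\bsd(\alpha)$ differs from $(p-1)a_n\omega^{n-1}+\bsd(\alpha)$ by at most the constant $p-1$, as claimed.

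I do not expect a genuine obstacle: the whole argument is bookkeeping with Cantor normal forms. The only things to watch are (i) that the constant-term correction $c$ is unchanged under multiplication by $p$ — the preliminary observation above — and (ii) the degenerate normal forms ($n=2$ in part (1), $n=1$ in part (2)) in which intermediate blocks vanish, which cause no trouble.
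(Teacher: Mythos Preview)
Your proof is correct and follows essentially the same route as the paper's: both compute $\bsd(p\alpha)$ by first putting $p\alpha$ into Cantor normal form via Lemma~\ref{scalar times alpha formula} and then applying Proposition~\ref{bsd is almost a linear operator on individual alpha}, and both handle $p\bsd(\alpha)$ and the degree-$1$ case by the same direct computations. Your part~(1) is in fact slightly cleaner than the paper's, since you obtain $p\,\bsd(\alpha)\cong(p-1)a_n\omega^{n-1}+\bsd(\alpha)$ by applying Lemma~\ref{scalar times alpha formula} directly to $\bsd(\alpha)$ (an ordinal of degree $n-1\ge 1$ with leading coefficient $a_n$), whereas the paper re-derives this identity by expanding the $p$-fold sum and repeatedly invoking Lemma~\ref{finitely many lower powers of omega coming before higher ones get combined}.
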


\begin{proof}
First suppose $\alpha$ has degree $n \geq 2$. By Proposition \ref{bsd is almost a linear operator on individual alpha},  $\bsd(\alpha) = a_n \omega^{n-1} + a_{n-1} \omega^{n-2} + \cdots + a_2 \omega + a_1 + c_\alpha$ (where $c_\alpha= 0$ if $a_0=0$ and $c_{\alpha} = 1$ if $a_0 >0$). Then, letting $T_{\bsd(\alpha)}$ denote the principal tail $a_{n-1} \omega^{n-2} + \cdots + a_1 + c_\alpha$ of $\alpha$, we have that every term in $T_{\bsd(\alpha)}$ has degree strictly smaller than the degree of $\bsd(\alpha)$, which is $n-1$. Thus
\begin{align*}
p \bsd(\alpha) & \cong p (a_n \omega^{n-1} + T_{\bsd(\alpha)}) \\
& \cong \underbrace{(a_n \omega^{n-1} + T_{\bsd(\alpha)}) + \cdots + (a_n \omega^{n-1} + T_{\bsd(\alpha)})}_{p \textrm{ times}} \\
& \cong \underbrace{a_n \omega^{n-1} + \cdots + a_n \omega^{n-1}}_{p \textrm{ times}} + T_{\bsd(\alpha)} \quad \textrm{(by Lemma \ref{finitely many lower powers of omega coming before higher ones get combined})}\\
& \cong \underbrace{a_n \omega^{n-1} + \cdots + a_n \omega^{n-1}}_{p-1 \textrm{ times}} + (a_n \omega^{n-1} + T_{\bsd(\alpha)}) \\
& \cong (p-1)a_n \omega^{n-1} + \bsd(\alpha).
\end{align*}
Also,
\begin{align*}
\bsd(p \alpha) & \cong \bsd(p a_n \omega^n + a_{n-1} \omega^{n-1} + \cdots + a_1 \omega + a_0) \quad \textrm{(by Lemma \ref{scalar times alpha formula})} \\
& \cong p a_n \omega^{n-1} + a_{n-1}\omega^{n-2} + \cdots + a_1 + c_\alpha \quad \textrm{(by Proposition \ref{bsd is almost a linear operator on individual alpha})} \\
& \cong (p-1)a_n \omega^{n-1} + a_n \omega^{n-1}+ a_{n-1}\omega^{n-2} + \cdots + a_1 + c_\alpha \\
& \cong (p-1)a_n \omega^{n-1} + \bsd(\alpha).
\end{align*}

Next, suppose $\alpha$ has degree $1$; say $\alpha = a_1 \omega + a_0$ for some $a_1, a_0 \in \omega$ with $a_1 \neq 0$. Then $\bsd(\alpha) = a_1 + c_\alpha$ where $c_\alpha = 0$ if $a_0=0$ and $c_\alpha = 1$ if $a_0 > 1$. Then $p \bsd(\alpha) \cong p(a_1 + c_\alpha) \cong p a_1 + p c_\alpha$, but $(p-1)a_n \omega^{n-1} + \bsd(\alpha) \cong (p-1)a_1 + (a_1 + c_\alpha) \cong pa_1 + c_\alpha$. This means that in this case $p \bsd(\alpha)$ is not isomorphic to $(p-1)a_n \omega^{n-1} + \bsd(\alpha)$ unless the constant term $a_0$ is zero. However, since $a_0 + a_1 \omega \cong a_1 \omega$,
\[
\bsd(p \alpha) \cong \bsd(p(a_1 \omega + a_0)) \cong \bsd(p a_1 \omega + a_0) \cong p a_1 + c_\alpha,
\]
so that $\bsd(p \alpha)$ is isomorphic to $(p-1)a_n \omega^{n-1}+ \bsd(\alpha)$ in this case.
\end{proof}

\begin{prop}\label{when bsd distributes over addition}
Let $\alpha$ and $\beta$ be nonzero ordinals of finite degree in Cantor normal form. Then we have $\bsd(\alpha + \beta) \cong \bsd(\alpha) + \bsd(\beta)$ in the following cases:
\begin{enumerate}
    \item $\alpha$ is a limit ordinal; or
    \item $\alpha$ is a successor ordinal and $\operatorname{deg}\beta \geq 2$.
\end{enumerate}
In all other cases, we have $\bsd(\alpha + \beta) +1 \cong \bsd(\alpha) + \bsd(\beta).$
\end{prop}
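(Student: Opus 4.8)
The plan is to split on whether $\alpha$ is a limit ordinal or a successor ordinal (these exhaust the nonzero ordinals), the successor case being further split according to $\operatorname{deg}\beta$. Throughout, recall that $\bsd(\gamma)=\ot(\faktor{\gamma}{\fin})$ and that $\faktor{\gamma}{\fin}$ depends only on the order type of $\gamma$, so that (per the preceding Remark) the maps $\Phi,\Phi^{-1}$ are cosmetic and the asserted isomorphisms are really equalities of ordinals. \emph{Case 1: $\alpha$ is a limit ordinal.} Then $\alpha$ has no last element, so the contrapositive of Lemma \ref{when finite condensation doesn't distribute} gives $\faktor{(\alpha+\beta)}{\fin}\cong\faktor{\alpha}{\fin}+\faktor{\beta}{\fin}$; passing to order types yields $\bsd(\alpha+\beta)\cong\bsd(\alpha)+\bsd(\beta)$. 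This disposes of case (1).

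\emph{Case 2: $\alpha$ is a successor ordinal.} Let $\ell=\max\alpha$ and $f=\min\beta$ (the latter exists since $\beta\neq 0$). In $\alpha+\beta$ the points $\ell$ and $f$ are consecutive, so $\ell\fin f$, whence $\fc^{\alpha+\beta}(\ell)=\fc^{\alpha+\beta}(f)$. A short convexity argument — any point of $\alpha$ lying strictly below the class $\fc^{\alpha}(\ell)$ is separated from $\ell$, hence from $f$ and from every point of $\beta$, by infinitely many points already inside $\alpha$, and dually for $\beta$ — shows that this common class equals $\fc^{\alpha}(\ell)\cup\fc^{\beta}(f)$ and that every other $\fin$-class of $\alpha+\beta$ coincides with a $\fin$-class of $\alpha$ or of $\beta$. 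Writing $\faktor{\alpha}{\fin}=P+1$ (legitimate since $\alpha$ has a maximum, so $\faktor{\alpha}{\fin}$ has a maximum) and $\faktor{\beta}{\fin}=1+Q$ (stripping the least class), we conclude $\faktor{(\alpha+\beta)}{\fin}\cong P+1+Q$, with the middle $1$ the amalgamated class. Hence $\bsd(\alpha+\beta)\cong P+1+Q$ while $\bsd(\alpha)+\bsd(\beta)\cong(P+1)+(1+Q)=P+2+Q$.

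It then remains to compare $P+1+Q$ with $P+2+Q$. If $\operatorname{deg}\beta\geq 2$, then by Proposition \ref{what finite condensation does to finite degree ordinals} the ordinal $\faktor{\beta}{\fin}=\bsd(\beta)$ has degree $\operatorname{deg}\beta-1\geq 1$, hence is infinite, so $Q$ is infinite and $1+Q=2+Q=Q$; thus $\bsd(\alpha+\beta)\cong P+1+Q=P+2+Q\cong\bsd(\alpha)+\bsd(\beta)$, which is case (2). If instead $\operatorname{deg}\beta\leq 1$, then $\bsd(\beta)$ is finite (it equals $1$ when $\operatorname{deg}\beta=0$, by the remark following Proposition \ref{what finite condensation does to finite degree ordinals}, and $b_1+c_\beta$ when $\operatorname{deg}\beta=1$), so $Q$ is finite, and using commutativity of finite ordinal addition, $\bsd(\alpha+\beta)+1\cong P+1+Q+1=P+(Q+2)=P+(2+Q)=P+2+Q\cong\bsd(\alpha)+\bsd(\beta)$. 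Since the nonzero ordinals of finite degree not covered by (1) or (2) are exactly those with $\alpha$ a successor and $\operatorname{deg}\beta\leq 1$, this exhausts all cases. The one step demanding genuine care is the structural claim $\faktor{(\alpha+\beta)}{\fin}\cong P+1+Q$ — that precisely the top $\fin$-class of $\alpha$ and the bottom $\fin$-class of $\beta$ amalgamate and nothing else is affected; everything after that is ordinal bookkeeping.
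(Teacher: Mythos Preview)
Your proof is correct and takes a genuinely different route from the paper's. The paper argues by an explicit nine-case analysis on $\deg(\alpha)$ and $\deg(\beta)$, computing the Cantor normal form of $\alpha+\beta$ in each case and then applying Proposition \ref{what finite condensation does to finite degree ordinals} term by term; the equality or off-by-one discrepancy is read off from the resulting formulas. You instead isolate the single structural fact responsible for the discrepancy: when $\alpha$ has a maximum, exactly the top $\fin$-class of $\alpha$ and the bottom $\fin$-class of $\beta$ amalgamate in $\alpha+\beta$, so that $\faktor{(\alpha+\beta)}{\fin}\cong P+1+Q$ against $\bsd(\alpha)+\bsd(\beta)\cong P+2+Q$. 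The question then reduces to whether the extra $1$ is absorbed on the left of $Q$, which happens precisely when $Q$ is infinite, i.e.\ when $\deg\beta\geq 2$. This is cleaner and explains \emph{why} the threshold is $\deg\beta\geq 2$ rather than merely verifying it; the paper's case analysis, on the other hand, yields explicit CNF expressions for $\bsd(\alpha+\beta)$ in each regime, which your argument does not directly produce.
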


\begin{proof}
Let $\alpha, \beta \in \fd$ be nonzero. If $\alpha$ is a limit ordinal, then $\bsd(\alpha + \beta) \cong \bsd(\alpha) + \bsd(\beta)$ by Lemma \ref{when finite condensation doesn't distribute}. 

Suppose $\alpha$ is a successor ordinal; say $\alpha = a_n \omega^n + \cdots + a_0$ where $a_0 >0$. We consider cases depending on $\operatorname{deg}(\alpha)$ and $\operatorname{deg}(\beta)$. The first three cases are for when $\deg(\beta)$ is at least $2$.

\subparagraph{\textit{Case 1:}}
Suppose $\deg(\beta) \geq 2$ and  $\deg(\alpha) < \deg(\beta)$. Since $\beta$ has degree at least $2$, we have $\deg(\bsd(\alpha)) < \deg(\deg(\beta))$ and $\deg(\bsd(\beta)) \geq 1$. Then by Lemma \ref{lower powers of omega coming before higher ones get combined}, 
\begin{equation}\label{formula case 1 bsd sum}
\bsd(\alpha + \beta) \cong \bsd(\beta) \cong \bsd(\alpha) + \bsd(\beta).
\end{equation}

\subparagraph{\textit{Case 2:}} Suppose $2 \leq \deg(\alpha) = \deg(\beta)$. Then by Lemma \ref{lower powers of omega coming before higher ones get combined},
\begin{equation}\label{formula case 2 bsd sum}
\begin{split}
\bsd(\alpha + \beta) & \cong \bsd(a_n \omega^n + \cdots + a_0 + b_n \omega^n + \cdots + b_0) \\
& \cong \bsd((a_n+b_n)\omega^n + b_{n-1}\omega^{n-1} + \cdots + b_0) \\
& \cong (a_n + b_n)\omega^{n-1} + b_{n-1}\omega^{n-2} + \cdots + b_1 + c_\beta,
\end{split}
\end{equation}
and 
\begin{align*}
\bsd(\alpha) + \bsd(\beta) & \cong (a_n \omega^{n-1} + \cdots + a_1 + c_\alpha) + (b_n \omega^{n-1} + \cdots + b_1 + c_\beta) \\ 
& = (a_n + b_n) \omega^{n-1} + b_{n-1}\omega^{n-2} + \cdots + b_1 + c_\beta
\end{align*}
as well.

\subparagraph{\textit{Case 3:}} Suppose $2 \leq \deg(\beta) < \deg(\alpha)$. Then by Lemma \ref{lower powers of omega coming before higher ones get combined},
\begin{equation}\label{formula case 3 bsd sum}
\begin{split}
\bsd(\alpha + \beta) & \cong \bsd(a_n \omega^n + \cdots + a_m \omega^m + \cdots + a_0 + b_m \omega^m + \cdots + b_0) \\
& \cong \bsd(a_n \omega^n + \cdots + (a_m + b_m)\omega^m + b_{m-1}\omega^{m-1}+ \cdots + b_0) \\
& \cong a_n \omega^{n-1} + \cdots + (a_m + b_m)\omega^{m-1} + b_{m-1}\omega^{m-2}+ \cdots + b_1 + c_\beta, 
\end{split}
\end{equation}
and also 
\begin{align*}
\bsd(\alpha) + \bsd(\beta) & \cong a_n \omega^{n-1} + \cdots + a_m \omega^{m-1} + \cdots + a_1 + c_\alpha + b_m \omega^{m-1} + \cdots + b_1 + c_\beta \\
& \cong a_n \omega^{n-1} + \cdots + (a_m + b_m) \omega^{m-1} + b_{m-1} \omega^{m-2} + \cdots + b_1 + c_\beta
\end{align*}
as well.

Thus in all cases where $\alpha$ is a successor ordinal and  $\deg(\beta) \geq 2$, we have $\bsd(\alpha + \beta) \cong \bsd(\alpha) + \bsd(\beta)$.

The next two cases are for when $\alpha$ is a successor ordinal of degree at least $2$ and $\deg(\beta) \leq 1$.

\subparagraph{\textit{Case 4:}} Suppose that $\alpha$ is a successor ordinal of degree at least $2$ and $\deg(\beta) = 1$; say $\beta = b_1 \omega + b_0$ for some $b_1 > 0$. Then by Lemma \ref{lower powers of omega coming before higher ones get combined},
\begin{align*}
\bsd(\alpha + \beta) & \cong \bsd(a_n \omega^n + \cdots + a_1 \omega + a_0 + b_1 \omega + b_0) \\
& \cong \bsd(a_n \omega^n + \cdots + (a_1+b_1) \omega + b_0) \\
& \cong a_n \omega^{n-1} + \cdots + (a_1 + b_1) + c_\beta,
\end{align*}
but
\begin{align*}
\bsd(\alpha) + \bsd(\beta) & \cong \bsd(a_n \omega^n + \cdots + a_1 \omega + a_0) + \bsd(b_1 \omega + b_0) \\
& \cong a_n \omega^{n-1} + \cdots + a_1 + 1 + b_1 + c_\beta \quad \textrm{since ($a_0 > 0$)}. \\
\end{align*}
\subparagraph{\textit{Case 5:}} Suppose that $\alpha$ is a successor ordinal of degree at least $2$ and $\deg(\beta) = 0$; say $\beta = b_0$ for some $b_0 > 0$. Then since $a_0>0$, 
\begin{align*}
\bsd(\alpha + \beta) & \cong \bsd(a_n \omega^n + \cdots + (a_0 + b_0)) \\
& \cong a_n \omega^{n-1}+ \cdots + a_1 + 1,
\end{align*}
but
\begin{align*}
\bsd(\alpha) + \bsd(\beta) & \cong (a_n \omega^{n-1} + \cdots + a_1 + 1) + 1.   
\end{align*}
Finally, the last four cases are for when $\alpha$ is a successor ordinal and  both $\alpha$ and $\beta$ have degree less than $2$.

\subparagraph{\textit{Case 6:}} If both $\alpha$ and $\beta$ have degree $0$, say $\alpha = a_0 > 0$ and $\beta = b_0 > 0$, then $\bsd(\alpha + \beta) \cong 1$ but $\bsd(\alpha) + \bsd(\beta) \cong 2.$

\subparagraph{\textit{Case 7:}} If $\alpha = a_0 > 0$ and $\beta$ has degree $1$, say $\alpha = a_0 > 0$ and $\beta = b_1 \omega + b_0$ for some $\beta_1 > 0$, then 
\[
\bsd(\alpha + \beta) \cong \bsd(a_0 + b_1 \omega + b_0) \cong \bsd(b_1 \omega + b_0) \cong b_1 + c_\beta,
\]
but 
\[
\bsd(\alpha) + \bsd(\beta) \cong \bsd(a_0) + \bsd(b_1 \omega + b_0) \cong 1 + b_1 + c_\beta.
\]

\subparagraph{\textit{Case 8:}}
If $\alpha$ is a successor ordinal of degree $1$ and $\deg(\beta) = 0$, say $\alpha = a_1 \omega + a_0$ for some $a_0, a_1 > 0$ and $\beta = b_0$ for some $b_0 > 0$, then
\[
\bsd(\alpha + \beta) \cong \bsd(a_1 \omega + a_0 + b_0) \cong a_1 + 1,
\]
but
\[
\bsd(\alpha) + \bsd(\beta) \cong \bsd(a_1 \omega + a_0) + \bsd(b_0) \cong a_1 + 1 + 1.
\]

\subparagraph{\textit{Case 9:}}
Finally, if $\alpha$ is a successor ordinal and both $\alpha$ and $\beta$ have degree $1$, say $\alpha = a_1 \omega + a_0$ for some $a_0, a_1 > 0$ and $\beta = b_1 \omega + b_0$ for some $b_1 > 0$, then 
\[
\bsd(\alpha + \beta) \cong \bsd(a_1 \omega + a_0 + b_1 \omega + b_0) \cong \bsd((a_1 + b_1) \omega + b_0) \cong a_1 + b_1 + c_\beta,
\]
but
\[
\bsd(\alpha) + \bsd(\beta) \cong \bsd(a_1 \omega + a_0) + \bsd(b_1 \omega + b_0) \cong a_1 + 1 + b_1 + c_\beta.
\]
\end{proof}

If $\alpha, \beta, \gamma \in \fd$ are limit ordinals, then by applying Proposition \ref{when bsd distributes over addition} twice, we have $\bsd(\alpha + \beta + \gamma) \cong \bsd(\alpha + \beta) + \bsd(\gamma) \cong \bsd(\alpha) + \bsd(\beta) + \bsd(\gamma)$. More generally, we obtain the following corollary, arguing by induction:

\begin{cor}\label{bsd distributes over finite sums of limits}
Suppose, for $1 \leq i \leq t$, that $\alpha_i$ is a limit ordinal of finite degree in Cantor normal form. Then
\[
\bsd\left(\sum_{i=1}^t \alpha_i \right) \cong \sum_{i=1}^t \bsd(\alpha_i).
\] \hfill \qed
\end{cor}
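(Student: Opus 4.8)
The plan is to argue by induction on $t$, peeling off the first summand and invoking Proposition \ref{when bsd distributes over addition}(1) together with the inductive hypothesis. The base case $t=1$ is the tautology $\bsd(\alpha_1) \cong \bsd(\alpha_1)$; the case $t=2$ is precisely Proposition \ref{when bsd distributes over addition}(1), since each $\alpha_i$ is a limit ordinal.

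For the inductive step, I would assume the result for $(t-1)$-tuples and let $\alpha_1, \dots, \alpha_t$ be limit ordinals of finite degree in Cantor normal form. Using associativity of ordinal addition, write $\sum_{i=1}^t \alpha_i = \alpha_1 + \beta$ where $\beta := \sum_{i=2}^t \alpha_i$; in keeping with the notational convention of the preceding remark, each argument of $\bsd$ is understood to be put into Cantor normal form before $\bsd$ is applied, so this regrouping is legitimate because $\Phi\bigl(\sum_{i=1}^t \alpha_i\bigr)$ does not depend on the bracketing. Since $\alpha_1$ is a limit ordinal and both $\alpha_1$ and $\beta$ are nonzero ordinals of finite degree (the degree of $\beta$ being the maximum of the degrees of $\alpha_2, \dots, \alpha_t$), Proposition \ref{when bsd distributes over addition}(1) gives
\[
\bsd\left(\sum_{i=1}^t \alpha_i\right) \cong \bsd(\alpha_1) + \bsd(\beta) = \bsd(\alpha_1) + \bsd\left(\sum_{i=2}^t \alpha_i\right).
\]
Now $\alpha_2, \dots, \alpha_t$ are limit ordinals of finite degree, so the inductive hypothesis applies to the $(t-1)$-tuple $(\alpha_2, \dots, \alpha_t)$ and yields $\bsd\bigl(\sum_{i=2}^t \alpha_i\bigr) \cong \sum_{i=2}^t \bsd(\alpha_i)$. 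Substituting, I obtain $\bsd\bigl(\sum_{i=1}^t \alpha_i\bigr) \cong \sum_{i=1}^t \bsd(\alpha_i)$, which completes the induction.

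The argument is essentially bookkeeping and there is no substantive obstacle, but two small points deserve care. First, one must respect the convention that $\bsd$ takes a Cantor normal form as input, which is exactly what makes the regrouping step valid. Second, it is cleaner to split off the \emph{first} summand rather than the last: doing so, only $\alpha_1$ itself needs to be a limit ordinal in order to apply Proposition \ref{when bsd distributes over addition}(1), and one avoids having to separately verify that an initial sum $\alpha_1 + \cdots + \alpha_{t-1}$ is again a limit ordinal (it is, but noting this would be an extra step).
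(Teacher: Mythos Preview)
Your proof is correct and takes essentially the same approach as the paper: induction on $t$ using Proposition~\ref{when bsd distributes over addition}(1). The only cosmetic difference is that the paper peels off the \emph{last} summand (writing $\bsd(\alpha_1+\cdots+\alpha_t)\cong\bsd(\alpha_1+\cdots+\alpha_{t-1})+\bsd(\alpha_t)$, which requires the easy observation that a sum of limit ordinals is again a limit ordinal), whereas you peel off the first; as you note, your choice slightly streamlines the hypothesis-checking.
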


\begin{thm}\label{bsd is linear on limit ordinals of finite degree at least 2}
Let $\alpha=a_n \omega^n + \cdots + a_1 \omega$ and $\beta = b_m \omega^m + \cdots + b_1 \omega$ be limit ordinals of finite degree at least $2$ in Cantor normal form, and let $p, q \in \omega$ with $p, q > 0$. Then $\bsd(p \alpha + q \beta) = p \bsd(\alpha) + q \bsd(\beta)$.

Moreover, we have the following expressions for $\bsd(p \alpha + q \beta)$ depending on the relationship between $\deg(\alpha)$ and $\deg(\beta)$:
\begin{enumerate}
    \item If $\deg(\alpha) < \deg(\beta)$, then \[\bsd(p \alpha + q \beta) \cong q b_m \omega^{m-1} + b_{m-1} \omega^{m-2} + \cdots + b_2 \omega + b_1;\]
    \item If $\deg(\alpha) = \deg(\beta)$, then \[\bsd(p \alpha + q \beta) \cong (p a_n+q b_n)\omega^{n-1} + b_{n-1} \omega^{n-2} + \cdots + b_2 \omega + b_1;\]
    \item If $\deg(\alpha) > \deg(\beta)$, then 
    \begin{align*}
    & \bsd(p \alpha + q \beta)   \\
    &\quad \cong p a_n \omega^{n-1} + a_{n-1} \omega^{n-2} + \cdots + a_{m+1}\omega^m + (a_m + qb_m)\omega^{m-1} + b_{m-1} \omega^{m-2} + \cdots + b_1.
    \end{align*}
\end{enumerate}
\end{thm}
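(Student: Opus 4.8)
The plan is to reduce everything to facts already proved for a single ordinal, and then to a Cantor-normal-form computation. First I would rewrite the inputs: since $\alpha$ is a limit ordinal of degree $n \geq 2$, Lemma \ref{scalar times alpha formula} gives $p\alpha \cong p a_n \omega^n + a_{n-1}\omega^{n-1} + \cdots + a_1 \omega$, which is again a limit ordinal, and likewise $q\beta \cong q b_m \omega^m + b_{m-1}\omega^{m-1} + \cdots + b_1\omega$. Because $p\alpha$ is a limit ordinal, case (1) of Proposition \ref{when bsd distributes over addition} applies to the sum $p\alpha + q\beta$, so that $\bsd(p\alpha + q\beta) \cong \bsd(p\alpha) + \bsd(q\beta)$.

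Next I would evaluate the two summands. This is the one point where the hypothesis $\deg \geq 2$ is used: since $\alpha$ and $\beta$ have degree at least $2$, part (1) of Corollary \ref{scalar times derivative of alpha} gives the genuine isomorphisms $\bsd(p\alpha) \cong p\,\bsd(\alpha)$ and $\bsd(q\beta) \cong q\,\bsd(\beta)$ (for a degree-$1$ limit ordinal one would only get agreement up to a constant, which is exactly why the theorem excludes that case). Combining this with the previous paragraph yields $\bsd(p\alpha + q\beta) \cong p\,\bsd(\alpha) + q\,\bsd(\beta)$, which is the first assertion of the theorem. (Alternatively, writing $p\alpha \cong \alpha + \cdots + \alpha$ and $q\beta \cong \beta + \cdots + \beta$, the same equation follows from Corollary \ref{bsd distributes over finite sums of limits}.)

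It then remains to put $p\,\bsd(\alpha) + q\,\bsd(\beta)$ into Cantor normal form, which gives the three displayed expressions. By Proposition \ref{bsd is almost a linear operator on individual alpha} (with $c_\alpha = c_\beta = 0$, since $\alpha,\beta$ are limits), $\bsd(\alpha) = a_n\omega^{n-1} + \cdots + a_2\omega + a_1$ has degree $n-1$ and $\bsd(\beta) = b_m\omega^{m-1} + \cdots + b_2\omega + b_1$ has degree $m-1$; applying Lemma \ref{scalar times alpha formula} once more writes $p\,\bsd(\alpha)$ and $q\,\bsd(\beta)$ in Cantor normal form with leading terms $p a_n \omega^{n-1}$ and $q b_m \omega^{m-1}$. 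I now split on whether $n < m$, $n = m$, or $n > m$, and in each case repeatedly invoke Lemma \ref{finitely many lower powers of omega coming before higher ones get combined} (together with Lemma \ref{lower powers of omega coming before higher ones get combined}) to absorb the lower-degree tail of $p\,\bsd(\alpha)$ into the leading term $q b_m \omega^{m-1}$ of $q\,\bsd(\beta)$: when $n < m$ all of $p\,\bsd(\alpha)$ is absorbed, leaving formula (1); when $n = m$ the leading terms combine to $(p a_n + q b_n)\omega^{n-1}$, leaving formula (2); and when $n > m$ the portion of $p\,\bsd(\alpha)$ of degree $< m-1$ is absorbed while $a_m\omega^{m-1} + q b_m \omega^{m-1} \cong (a_m + q b_m)\omega^{m-1}$ survives, leaving formula (3).

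I expect no real obstacle here — the argument is essentially bookkeeping once Proposition \ref{when bsd distributes over addition} and Corollary \ref{scalar times derivative of alpha} are available. The two spots needing care are (i) confirming that the degree-$\geq 2$ hypothesis is what allows the clean form of Corollary \ref{scalar times derivative of alpha}(1), rather than its weaker degree-$1$ variant, so that the equation is an exact one; and (ii) tracking indices in the case $\deg\alpha > \deg\beta$ so that the surviving middle term comes out as $(a_m + q b_m)\omega^{m-1}$ and the tail of $\beta$ is $b_{m-1}\omega^{m-2} + \cdots + b_1$, as in formula (3).
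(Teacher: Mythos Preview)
Your proposal is correct and follows essentially the same approach as the paper: split $\bsd(p\alpha+q\beta)$ into $\bsd(p\alpha)+\bsd(q\beta)$ using that $p\alpha$ is a limit ordinal, then reduce each summand to $p\,\bsd(\alpha)$ and $q\,\bsd(\beta)$, and finally carry out the Cantor-normal-form bookkeeping in the three degree cases. The only cosmetic difference is that the paper handles both steps via Corollary~\ref{bsd distributes over finite sums of limits} (expanding $p\alpha$ as a $p$-fold sum), which you mention as your alternative route, whereas your primary route cites Proposition~\ref{when bsd distributes over addition} and Corollary~\ref{scalar times derivative of alpha}(1) directly.
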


\begin{proof}
Let $\alpha, \beta, p$, and $q$ be as above. Since $\alpha$ and $\beta$ are limit ordinals, so are $p \alpha$ and $q \beta$. Then by Corollary \ref{bsd distributes over finite sums of limits},
\begin{align*}
\bsd(p \alpha + q \beta) & \cong \bsd(p \alpha) + \bsd(q \beta) \\
& \cong \bsd\left(\sum_{i=1}^p \alpha\right) + \bsd\left(\sum_{i=1}^q \beta\right) \\
& \cong \sum_{i=1}^p \bsd(\alpha) + \sum_{i=1}^q \bsd(\beta) \\
& \cong p \bsd(\alpha) + q \bsd(\beta).
\end{align*}
Suppose $\deg(\alpha) < \deg(\beta)$. Then by Lemma \ref{finitely many lower powers of omega coming before higher ones get combined} and Corollary \ref{scalar times derivative of alpha},
\begin{align*}
\bsd(p \alpha + q \beta) & \cong \bsd(q \beta) \cong (q-1)b_m \omega^{m-1} + \bsd(\beta) \\
& \cong (q-1)b_m \omega^{m-1} + b_m \omega^{m-1} + b_{m-1}\omega^{m-2} + \cdots + b_2 \omega + b_1 \\
& \cong qb_m \omega^{m-1} + b_{m-1}\omega^{m-2} + \cdots + b_2 \omega + b_1.
\end{align*}
Suppose $\deg(\alpha) = \deg(\beta)$. Then by Lemma \ref{finitely many lower powers of omega coming before higher ones get combined} and Corollary \ref{scalar times derivative of alpha},
\begin{align*}
\bsd(p \alpha + q \beta) & \cong \bsd(p \alpha) + \bsd(q \beta) \\
& \cong (p-1)a_n \omega^{n-1} + \bsd(\alpha) + (q-1)b_n \omega^{n-1} + \bsd(\beta) \\
& \cong (p-1) a_n \omega^{n-1} + a_n \omega^{n-1} + \cdots + a_1 \\
& \quad \quad + (q-1)b_n \omega^{n-1} + b_n \omega^{n-1} + \cdots + b_1 \\
& \cong p a_n \omega^{n-1} + a_{n-1} \omega^{n-2} + \cdots + a_1 \\
& \quad \quad + q b_n \omega^{n-1} + b_{n-1} \omega^{n-2} + \cdots + b_1 \\
& \cong (p a_n + q b_n) \omega^{n-1} + b_{n-1} \omega^{n-2} + \cdots + b_1.
\end{align*}

Finally, suppose $\deg(\alpha) > \deg(\beta)$. Then again by Lemma \ref{finitely many lower powers of omega coming before higher ones get combined} and Corollary \ref{scalar times derivative of alpha},
\begin{align*}
\bsd(p \alpha + q \beta) & \cong \bsd(p \alpha) + \bsd(q \beta) \\
& \cong (p-1)a_n \omega^{n-1} + \bsd(\alpha) + (q-1)b_m \omega^{m-1} + \bsd(\beta) \\
& \cong (p-1)a_n \omega^{n-1} + a_n \omega^{n-1} + \cdots + a_m\omega^{m-1} + \cdots + a_1 \\
& \quad \quad + (q-1) b_m \omega^{m-1} + b_m \omega^{m-1} + \cdots + b_1 \\
& \cong p a_n \omega^{n-1} + a_{n-1} \omega^{n-2} + \cdots + a_m \omega^{m-1} + \cdots + a_1 \\
& \quad \quad + q b_m \omega^{m-1} + b_{m-1} \omega^{m-2} + \cdots + b_1 \\
& \cong p a_n \omega^{n-1} + a_{n-1} \omega^{n-2} + \cdots + a_m \omega^{m-1} + q b_m \omega^{m-1} \\
& \quad \quad + b_{m-1} \omega^{m-2} + \cdots + b_1 \\
& \cong p a_n \omega^{n-1} + a_{n-1} \omega^{n-2} + \cdots + (a_m + q b_m) \omega^{m-1} + b_{m-1} \omega^{m-2} + \cdots + b_1.
\end{align*}
\end{proof}

More generally, we obtain the following expression for the finite condensation derivative of expressions of the form $\sum_{i=1}^s p_i \alpha_i$, with some assumptions on the ordinals $\alpha_i$. 

\begin{thm}\label{lol}
Suppose, for some $s \in \omega, s \geq 2$ that $\alpha_1, \ldots, \alpha_s$ are limit ordinals of finite degree in Cantor normal form, and $p_1, \ldots, p_s$ are nonzero natural numbers. Suppose also that for each $1 \leq i \leq s$, $\alpha_i$ has leading term $a_{n_i} \omega^{n_i}$. Then
\begin{equation}\label{lol equation 1}
\bsd \left(  \sum_{i=1}^s p_i \alpha_i \right) \cong \sum_{i=1}^s \left((p_i - 1)a_{n_i}\omega^{\deg(a_i)-1} + \bsd(\alpha_i) \right).
\end{equation}
If, in addition, each $\alpha_i$ is of degree at least $2$, then  
\begin{equation}\label{lol equation 2}
\bsd \left(  \sum_{i=1}^s p_i \alpha_i \right) \cong \sum_{i=1}^s \left((p_i - 1)a_{n_i}\omega^{\deg(a_i)-1} + \bsd(\alpha_i) \right) \cong \sum_{i=1}^s p_i \bsd(\alpha_i).
\end{equation} 
\end{thm}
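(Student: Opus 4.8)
The plan is to reduce the statement to the two corollaries already in hand: Corollary \ref{bsd distributes over finite sums of limits}, which says that $\bsd$ commutes with a finite sum of limit ordinals of finite degree, and Corollary \ref{scalar times derivative of alpha}, which expresses $\bsd(p\alpha)$ in terms of $\bsd(\alpha)$. No new induction is needed, since Corollary \ref{bsd distributes over finite sums of limits} is already stated for an arbitrary finite sum.

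First I would check that each $p_i\alpha_i$ is again a limit ordinal of finite degree. Since $p_i \geq 1$, we have $p_i\alpha_i \cong \alpha_i + \cdots + \alpha_i$ ($p_i$ summands), whose last summand $\alpha_i$ is a limit ordinal, so the whole sum is a limit ordinal; and its degree is $\deg(\alpha_i) = n_i$, which is finite and (as $\alpha_i$ is infinite) at least $1$. Hence $p_1\alpha_1, \ldots, p_s\alpha_s$ satisfy the hypotheses of Corollary \ref{bsd distributes over finite sums of limits}, which gives
\[
\bsd\left(\sum_{i=1}^s p_i\alpha_i\right) \cong \sum_{i=1}^s \bsd(p_i\alpha_i).
\]
Then I would apply Corollary \ref{scalar times derivative of alpha} to each summand: since $p_i>0$, that corollary yields $\bsd(p_i\alpha_i) \cong (p_i - 1)a_{n_i}\omega^{n_i - 1} + \bsd(\alpha_i)$ for every $i$, and this formula holds whether $n_i \geq 2$ or $n_i = 1$ — the degree-one caveat in Corollary \ref{scalar times derivative of alpha} concerns only the further comparison with $p_i\bsd(\alpha_i)$, not the formula itself. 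Substituting these isomorphisms termwise (replacing each summand of a sum of linear orders by an isomorphic copy yields an isomorphic sum) produces Equation (\ref{lol equation 1}). For Equation (\ref{lol equation 2}), if in addition every $\alpha_i$ has degree at least $2$, then Corollary \ref{scalar times derivative of alpha}(1) upgrades each of these to $\bsd(p_i\alpha_i) \cong p_i\bsd(\alpha_i)$, and a second termwise substitution gives the final isomorphism.

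I do not expect a genuine obstacle here, as the content lies entirely in the two corollaries; the only points requiring attention are (i) confirming $p_i\alpha_i$ remains a limit ordinal of finite degree so that Corollary \ref{bsd distributes over finite sums of limits} applies, and (ii) noting that the formula from Corollary \ref{scalar times derivative of alpha} still holds in the degree-one case, which is precisely what makes Equation (\ref{lol equation 1}) valid without assuming all degrees are at least $2$. (The exponent written $\deg(a_i)$ in the statement is understood to mean $\deg(\alpha_i)$.)
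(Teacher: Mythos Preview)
Your proposal is correct and follows essentially the same route as the paper's own proof: observe that each $p_i\alpha_i$ is a limit ordinal of finite degree, apply Corollary~\ref{bsd distributes over finite sums of limits} to split the sum, and then invoke Corollary~\ref{scalar times derivative of alpha} termwise. Your version is in fact slightly more careful than the paper's in distinguishing the degree-one case for Equation~(\ref{lol equation 1}) and in explicitly justifying why each $p_i\alpha_i$ remains a limit ordinal.
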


\begin{proof}
If each $\alpha_i$ is a limit ordinal, then each $p_i \alpha_i$ is also a limit ordinal, so we have by Corollary \ref{bsd distributes over finite sums of limits} and the first part of Corollary \ref{scalar times derivative of alpha} that 
\begin{equation}
\bsd \left(  \sum_{i=1}^s p_i \alpha_i \right) \cong \sum_{i=1}^s \bsd(p_i \alpha_i) \cong \sum_{i=1}^s \left((p_i - 1)a_{n_i}\omega^{\deg(a_i)-1} + \bsd(\alpha_i) \right).
\end{equation}
If we also assume that each $\alpha$ is of degree at least $2$, then we have by the second part of Corollary \ref{scalar times derivative of alpha} that both of these expressions are isomorphic to $\sum_{i=1}^s p_i \bsd(\alpha_i)$.
\end{proof}

Note that Proposition \ref{lol} shows only a quasi-linear behavior of $\bsd$ on $\fd$: first, we require additional properties of the ordinals $\alpha_i$; and second, $\bsd(\alpha_i)$ need not have those properties. (For example, $\omega^2 + \omega$ is a limit ordinal of degree $2$, but $\bsd(\omega^2 + \omega) \cong \omega + 1$ is a successor ordinal of degree $1$.)

\begin{comment}
\begin{cor}\label{explicit formulas}
Let $\alpha = a_n \omega^n + \cdots + a_1 \omega$ and $\beta = b_m \omega^m + \cdots + b_1 \omega$ be limit ordinals in Cantor normal form of finite degree at least $2$. 
\begin{enumerate}
\item If $\deg(\alpha) < \deg(\beta)$, then $\bsd(\alpha + \beta) \cong \bsd(\beta)$
\end{enumerate}
\end{cor}
\end{comment}

One could also consider right multiplication by natural number scalars modulo the finite condensation, but the next proposition shows that such multiplication has the same effect as the finite condensation derivative $\bsd$.

\begin{prop}\label{right multiplication by scalars mod finite does nothing}
For all $\alpha \in \fd$ and $p \in \omega$ with $p>0$, $\bsd(\alpha p) \cong \bsd(\alpha)$.
\end{prop}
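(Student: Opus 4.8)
I would prove the statement by first establishing the more general fact that $\faktor{Lp}{\fin}\cong\faktor{L}{\fin}$ for \emph{every} linear order $L$ and every natural number $p\geq 1$, and then specializing to $L=\alpha$. Since $\alpha p$ is an ordinal (a lexicographic product of well-orders is a well-order) and $\bsd(\gamma)=\ot(\faktor{\gamma}{\fin})$ for any ordinal $\gamma$, this gives $\bsd(\alpha p)=\ot(\faktor{\alpha p}{\fin})\cong\ot(\faktor{\alpha}{\fin})=\bsd(\alpha)$ at once; Lemma \ref{well ordered-ness inherited} ensures both quotients are well-ordered, so these order types really are ordinals.

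In the lexicographic convention $Lp$ is formed by replacing each $\ell\in L$ with a block $p_\ell$ of order type $p$; let $\pi:Lp\to L$ be the projection collapsing $p_\ell$ to $\ell$, a weakly order-preserving surjection all of whose fibers have size $p$. The crux is the chain of inequalities
\[
\bigl|[\{\pi(r),\pi(s)\}]\bigr|\ \leq\ \bigl|[\{r,s\}]\bigr|\ \leq\ p\cdot\bigl|[\{\pi(r),\pi(s)\}]\bigr|,
\]
valid for all $r,s\in Lp$: the right-hand inequality holds because $r\leq t\leq s$ forces $\pi(r)\leq\pi(t)\leq\pi(s)$, so the interval $[\{r,s\}]$ in $Lp$ is contained in the $\pi$-preimage of the interval $[\{\pi(r),\pi(s)\}]$ in $L$; the left-hand inequality holds because one can select, for each $\ell\in[\{\pi(r),\pi(s)\}]$, a point of $[\{r,s\}]$ lying inside $p_\ell$ (taking $r$ or $s$ themselves for the two boundary blocks), which gives an injection. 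Hence $[\{r,s\}]$ is finite if and only if $[\{\pi(r),\pi(s)\}]$ is, i.e.\ $r\fin s$ in $Lp$ exactly when $\pi(r)\fin\pi(s)$ in $L$.

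From this equivalence, together with the surjectivity of $\pi$ and the fact that $\pi$ is weakly order-preserving, one checks routinely that $\pi$ descends to an order-isomorphism $\faktor{Lp}{\fin}\to\faktor{L}{\fin}$ sending the $\fin$-class of $r$ to the $\fin$-class of $\pi(r)$: well-definedness and injectivity come from the ``iff'', surjectivity from that of $\pi$, and strict order-preservation from the observation that $\fc(r)<\fc(r')$ in $\faktor{Lp}{\fin}$ forces $\pi(r)<\pi(r')$ in $L$ with $\pi(r),\pi(r')$ in distinct $\fin$-classes. Taking $L=\alpha$ finishes the proof.

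The only point needing care is the bookkeeping in the displayed inequalities — in particular the boundary blocks $p_{\pi(r)},p_{\pi(s)}$ and the degenerate case $\pi(r)=\pi(s)$, where both intervals are finite for trivial reasons. A more computational alternative, closer to the style of the surrounding results, would be to first show — using associativity of the lexicographic product together with $\omega p\cong\omega$ — that $\alpha p$ has Cantor normal form $a_n\omega^n+\cdots+a_1\omega+pa_0$ when $\alpha=a_n\omega^n+\cdots+a_0$ has degree $n\geq 1$ (so the limit part of $\alpha$ is unchanged), and then invoke Proposition \ref{bsd is almost a linear operator on individual alpha}, noting that $pa_0\neq 0\iff a_0\neq 0$ since $p>0$; the degree-$0$ case $\alpha=a_0$ is immediate.
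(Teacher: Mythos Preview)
Your primary argument is correct, and it is genuinely different from what the paper does. The paper takes exactly the computational route you sketch as an alternative at the end: it applies right distributivity to write $\alpha p \cong a_n\omega^n p + \cdots + a_1\omega p + a_0 p$, uses $\omega^i p \cong \omega^i$ for $i>0$ to conclude that $\alpha p$ has Cantor normal form $a_n\omega^n + \cdots + a_1\omega + a_0 p$, and then reads off $\bsd(\alpha p)$ from Proposition~\ref{what finite condensation does to finite degree ordinals}, noting that $a_0 p \neq 0$ iff $a_0 \neq 0$.

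Your projection argument buys real generality: $\faktor{Lp}{\fin}\cong\faktor{L}{\fin}$ holds for \emph{every} linear order $L$, not just ordinals of finite degree, and the proof never touches Cantor normal form. The paper's approach, by contrast, stays entirely inside the CNF bookkeeping that the surrounding section is built on, so it is shorter in context and requires no new lemma --- but it does not extend beyond $\fd$. Since you explicitly anticipated the paper's method as your ``more computational alternative,'' you have both proofs in hand.
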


\begin{proof}
Let $\alpha \cong a_n \omega^n + \cdots + a_1 \omega + a_0 \in \fd$, and let $p \in \omega$ with $p>0$. Observe that $\omega p \cong \omega$ (as $\omega p$ is formed by replacing each element of $\omega$ with  a copy of $p$); and, in general, if $i>0$, then $\omega^i p \cong \omega^i$. Then by right distributivity,
\begin{align*}
\bsd(\alpha p) & \cong \bsd((a_n \omega^n + \cdots + a_1 \omega + a_0)p) \\
& \cong \bsd(a_n \omega^n p + \cdots + a_1 \omega p + a_0 p) \\
& \cong \bsd(a_n \omega^n + \cdots + a_1 \omega + a_0 p) \\
& \cong a_n \omega^{n-1} + \cdots + a_1 + c
\end{align*}
where $c=0$ if $a_0 p =0$, and $c=1$ if $a_0 p >0$; that is, since $p>0$, $c=0$ if $a_0=0$, and $c=1$ if $a_0 >0$. Therefore
\[
\bsd(\alpha p) \cong a_n \omega^{n-1} + \cdots + a_1 + c_\alpha \cong \bsd(\alpha).
\]
\end{proof}

\section{Future work}
We plan to address the following questions.
\begin{enumerate}
    \item Which of the properties of $\bsd$ acting on ordinals of finite degree remain true if we consider $\bsd$ acting on countable ordinals of degree at least $\omega$?
    \item If the finite condensation is replaced by another condensation, and if we define a multiplication of linear orders based on that condensation, what algebraic structures arise?
    \item Given a condensation $\sim$, can we characterize the set of linear orders $L$ such that $\faktor{L}{\sim} \cong 1$? 
\end{enumerate}

\section{Statements and Declarations}

The authors have no competing interests to declare.

\end{document}